\newcommand{\N}{\mathbb{N}}
\newcommand{\Z}{\mathbb{Z}}
\newcommand{\R}{\mathbb{R}}
\newcommand{\intr}{\operatorname{int}}
\newcommand{\chf}{\mathbf{1}}
\renewcommand{\epsilon}{\varepsilon}
\newtheoremstyle{mythmstyle}
	{\topsep}
	{\topsep}
	{\itshape}
	{}
	{\scshape}
	{.}
	{3pt}
	{}
\theoremstyle{mythmstyle}
\newtheorem{nn}{}[section]
\newtheorem{lemma}[nn]{Lemma}
\newtheorem{theorem}[nn]{Theorem}
\newtheorem{prop}[nn]{Proposition}
\newtheorem{REMARK}[nn]{Remark}
\newenvironment{remark}{\begin{REMARK}}{\end{REMARK}}
\newenvironment{cpf}{\begin{trivlist} \item[] {\em Proof of Claim.}}{\hspace*{\stretch{1}} $\diamond$ \end{trivlist}}
\newtheoremstyle{itsemicolon}{}{}{\mdseries\rmfamily}{}{\itshape}{:}{ }{}
\newtheoremstyle{itdot}{}{}{\mdseries\rmfamily}{}{\itshape}{:}{ }{}
\theoremstyle{itdot}
\newtheorem*{msc*}{2010 Mathematics Subject Classification}
\newtheorem*{keywords*}{Keywords}
\numberwithin{equation}{section}
\title{Extreme functions with an arbitrary number of slopes\thanks{Amitabh Basu and Joseph Paat were supported by the NSF grant CMMI1452820. Michele Conforti and Marco Di Summa were supported by the grant ``Progetto di Ateneo 2013" of the University of Padova}}
\author{Amitabh Basu\footnote{Department of Applied Mathematics and Statistics, The Johns Hopkins University}
\and Michele Conforti\footnote{Dipartimento di Matematica, Universit\`a degli Studi di Padova, Italy.}
\and Marco Di Summa\footnotemark[2]
\and Joseph Paat\footnotemark[1]}
\begin{document}
\maketitle

\begin{abstract}
For the one dimensional infinite group relaxation, we construct a sequence of extreme valid functions that are piecewise linear and such that for every natural number $k\geq 2$, there is a function in the sequence with $k$ slopes. This settles an open question in this area regarding a universal bound on the number of slopes for extreme functions. The function which is the pointwise limit of this sequence is an extreme valid function that is continuous and has an infinite number of slopes. This provides a new and more refined counterexample to an old conjecture of Gomory and Johnson stating that all extreme functions are piecewise linear. These constructions are extended to obtain functions for the higher dimensional group problems via the sequential-merge operation of Dey and Richard.
%\keywords{Cutting plane theory \and Infinite group relaxation \and Minimal valid functions \and Extreme valid functions}
% \PACS{PACS code1 \and PACS code2 \and more}
%\subclass{90C10 \and 90C11}
\end{abstract}

\section{Introduction}

%{\red Change to $n$ dimensions}

 Let $b\in \R^n\setminus \Z^n$. The {\em infinite group relaxation} $I_b$ is the set of functions $y:\R^n\to \Z_+$ having finite support (that is, $\{r \in \R^n:\;y(r)>0\}$ is a finite set)
 satisfying
 \begin{equation}\label{eq:group-problem}\sum_{r\in \R^n} ry(r)\in b+\Z^n. %\;x_r\in \Z_+.
 \end{equation}
 A function $\pi:\;\R^n\to \R_+$ is \emph{valid} for $I_b$ if
 \begin{equation}\sum_{r\in \R^n} \pi(r)y(r)\ge 1,\mbox{ for every }\;y \in R_b(\R^n,\Z^n).
 \end{equation}
The set $I_b$ has been referred to by multiple names in the literature, see, e.g.,~\cite{basu2016light}.

 Valid functions for the infinite group relaxation were first introduced by Gomory and Johnson~\cite{infinite,infinite2} as means to obtain cutting planes for mixed-integer programs. This idea has recently culminated in the study of {\em cut-generating functions} which has become one of the central aspects of modern cutting plane theory. The surveys of  Basu, Hildebrand, K\"oppe~\cite{basu2016light,basu2016light2} and Basu, Conforti, Di Summa~\cite{basu2015geometric} provide a comprehensive introduction to the subject and survey the recent advances.

 The most well known valid function is the Gomory mixed-integer (GMI) function, which is a valid function for $n=1$. The GMI is defined as follows:

\begin{equation}\label{Gom-funct}
\phi(x) = \begin{cases} \frac{1}{b}x,&0\leq x< b\\  \frac{1}{1-b}-\left(\frac{1}{1-b}\right)x,&b\leq x< 1\\ \phi(x-j),&x\in [j,j+1),~j\in \Z\setminus\{0\}~.\end{cases}
\end{equation}

  A valid function $\pi$ is \emph{minimal} if $\pi=\pi'$ for every valid function $\pi'$ such that $\pi'\le \pi$. The motivation for this definition is the following. Given valid functions $\pi$ and $\pi'$, we say that $\pi'$ \emph{dominates} $\pi$ if for every function $y:\R^n\to \Z_+$ with finite support satisfying the inequality $\sum_{r\in \R^n} \pi(r)y(r)< 1$, the function $y$ also satisfies the inequality $\sum_{r\in \R^n} \pi'(r)y(r)< 1$.  Observe that if $\pi'$ dominates $\pi$, then $\pi$ is redundant for describing $I_b$. Furthermore, if $\pi'\leq \pi$, then $\pi'$ dominates $\pi$. Thus, if a valid function is not minimal, then it is redundant for describing $I_b$.
  
 %Let $\mathcal{N}$ be the set of nonnegative functions from $\R^n$ to $\R$ with finite support. Given valid functions  $\pi$ and $\pi'$ such that $\pi'\le \pi$ and  $\pi\ne\pi'$, it holds that $\{y \in \mathcal{N}:\sum \pi'(r)y(r)\ge 1\}\subsetneq \{y \in \mathcal{N}:\sum \pi(r)y(r)\ge 1\}$. Therefore if a valid function is not minimal, then it is redundant.

  A function $\theta\colon \R^n \to \R$ is {\em subadditive} if $\theta(x)+\theta(y) \geq \theta(x+y)$  for all $x,y \in \R^n$. $\theta$  satisfies the \emph{symmetry condition} if $\theta(x) + \theta(b - x) = 1$ for all $x\in \R^n$. Finally, $\theta$ is \emph{periodic modulo
  $\Z^n$} if $\theta(x) = \theta(x + z)$ for all  $x \in \R^n$ and $z \in \Z^n$.

\begin{theorem}[Gomory and Johnson \cite{infinite}] \label{thm:minimalinteger} A function
  $\pi \colon \R^n \to \R_+$ is a minimal valid function for $I_b$ if and only if $\pi(z) = 0$ for
  all $z\in \Z^n$, $\pi$ is subadditive, and $\pi$ satisfies the symmetry
  condition. (These conditions imply that $\pi$ is periodic modulo $\Z^n$
  and $\pi(b)=1$.)
\end{theorem}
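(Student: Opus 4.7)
The plan is to prove the two implications separately. For sufficiency, assume $\pi \ge 0$ satisfies $\pi(z) = 0$ for all $z \in \Z^n$, subadditivity, and the symmetry condition. Periodicity follows from subadditivity together with $\pi$ vanishing on $\Z^n$: one has $\pi(r+z) \le \pi(r) + \pi(z) = \pi(r)$ and dually $\pi(r) = \pi((r+z)-z) \le \pi(r+z) + \pi(-z) = \pi(r+z)$. Evaluating symmetry at $r = 0$ yields $\pi(b) = 1 - \pi(0) = 1$. For validity, given any $y$ with finite support and $\sum_r r y(r) = b + z_0$ for some $z_0 \in \Z^n$, iterated subadditivity produces $\sum_r \pi(r) y(r) \ge \pi(\sum_r r y(r)) = \pi(b + z_0) = \pi(b) = 1$. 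For minimality, let $\pi' \le \pi$ be valid; for every $r \in \R^n$, the function $y$ with $y(r) = y(b-r) = 1$ and $y = 0$ elsewhere is feasible, yielding $\pi'(r) + \pi'(b-r) \ge 1 = \pi(r) + \pi(b-r)$ by symmetry, and together with $\pi' \le \pi$ coordinatewise this forces $\pi' = \pi$.

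For necessity, assume $\pi$ is minimal valid. Each required property is established by a shrinking argument: if the property fails at a point, exhibit a valid $\pi' \le \pi$ with $\pi' \ne \pi$, contradicting minimality. For $\pi(z) = 0$ with $z \in \Z^n$, given feasible $y$ the modification $\tilde y$ with $\tilde y(z) = 0$ (other values unchanged) is still feasible because $\sum r \tilde y(r) = \sum r y(r) - z\, y(z) \in b + \Z^n$, so $\sum_{r \ne z} \pi(r) y(r) \ge 1$, which is validity of the $\pi'$ that agrees with $\pi$ off $\{z\}$ and vanishes at $z$. For subadditivity, a violation $\pi(r_1 + r_2) > \pi(r_1) + \pi(r_2)$ is handled by replacing, in any feasible $y$, the $y(r_1 + r_2)$ copies of $r_1 + r_2$ by one extra copy each of $r_1$ and $r_2$: the adjusted $\tilde y$ is feasible with the same weighted sum, and $\sum \pi \tilde y$ equals $\sum \pi' y$ for the $\pi'$ that decreases $\pi$ only at $r_1 + r_2$.

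Symmetry is the subtlest step. Validity on $y(r) = y(b-r) = 1$ gives $\pi(r) + \pi(b-r) \ge 1$, and the task is to rule out strict inequality. Assume $\pi(r^*) + \pi(b-r^*) > 1$ at some $r^*$. The naive candidate $\pi'(r^*) := 1 - \pi(b-r^*)$ does not survive validity when the analogous decrement is applied at $b-r^*$ simultaneously, so the construction cannot be performed one point at a time. My plan is to leverage the already-established subadditivity, periodicity, and vanishing on $\Z^n$ to argue that any feasible $y$ with $y(r^*) \ge 1$ can be rearranged (for example, by replacing each copy of $r^*$ with one copy of $b-r^*$ and one copy of $2r^* - b$, which sum correctly) so as to translate the strict excess $\pi(r^*) + \pi(b-r^*) - 1 > 0$ into quantitative slack in the validity inequality proportional to $y(r^*)$. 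Once this slack is uniform in $y$, decreasing $\pi(r^*)$ by an appropriate positive amount preserves validity and contradicts minimality. The main obstacle is exactly this global, coupled nature of the symmetry shrinking, which contrasts with the local substitutions available for the other conditions and requires the rearrangement of an arbitrary feasible $y$ to interact correctly with the group constraint $\sum r y(r) \in b + \Z^n$.
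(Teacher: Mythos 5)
The paper does not actually prove Theorem~\ref{thm:minimalinteger} --- it is quoted from Gomory and Johnson \cite{infinite} --- so I am assessing your argument on its own terms rather than against a reference proof.

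Your sufficiency direction is correct (periodicity from subadditivity plus vanishing on $\Z^n$, $\pi(b)=1$ from symmetry at $0$, validity from iterated subadditivity and periodicity, and minimality from the pairwise comparison at $r$ and $b-r$). Your necessity arguments for $\pi|_{\Z^n}=0$ and for subadditivity are also correct: in each case the modified $\pi'$ is shown valid by a local ``rerouting'' of any feasible $y$ that exactly accounts for the decrement, and this is the standard technique.

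The genuine gap is, as you anticipate, the necessity of symmetry; but both your diagnosis and your proposed repair have problems. The obstruction is not primarily the coupling between $r^*$ and $b-r^*$: even decreasing $\pi$ at the single point $r^*$ to the candidate value $1-\pi(b-r^*)$ can destroy validity, because a feasible $y$ may carry a large multiplicity $y(r^*)=m$, and the decrement is charged $m$ times while the safety margin need not scale. Your suggested rearrangement, replacing each copy of $r^*$ by one copy of $b-r^*$ and one of $2r^*-b$, runs the wrong way: subadditivity gives $\pi(b-r^*)+\pi(2r^*-b)\geq\pi(r^*)$, so this substitution can only increase $\sum_r\pi(r)y(r)$, producing no slack. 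Two ingredients are missing. First, you need $\pi\leq 1$; this requires its own shrinking step (replace $\pi$ by $\min\{\pi,1\}$ and verify validity), and it is what guarantees $\pi(r^*)>0$ under the assumed violation. Second, you need a lower bound on $\sum_r\pi(r)y(r)-1$ that scales linearly in $y(r^*)$; this comes from two estimates obtained by removing the $r^*$-copies from $y$ and applying iterated subadditivity together with periodicity, namely $\sum_r\pi(r)y(r)\geq m\pi(r^*)+\pi(b-mr^*)\geq \pi(r^*)+\pi(b-r^*)=1+\epsilon_0$ and $\sum_r\pi(r)y(r)\geq m\pi(r^*)$, where $m=y(r^*)$ and $\epsilon_0>0$ is the assumed excess. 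Combining them shows $\bigl(\sum_r\pi(r)y(r)-1\bigr)/m$ is bounded below by a positive constant depending only on $\epsilon_0$ and $\pi(r^*)$, so a uniform decrement $\delta>0$ at $r^*$ (or on its $\Z^n$-coset) preserves validity and nonnegativity, contradicting minimality. Without these estimates and without $\pi\leq 1$, the symmetry step remains a sketch rather than a proof.
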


It is easy to check that the Gomory mixed-integer function defined above is subadditive and satisfies the symmetry condition. Therefore, by the above theorem, it is a minimal function.

Minimal functions are the ones that are not dominated by any other function. However a minimal function may be implied by the convex combinations of other valid functions. Gomory and Johnson define
 a valid function  $\pi$ to be \emph{extreme} if $\pi=\pi_1=\pi_2$ for every pair of valid functions $\pi_1,\pi_2$ such that $\pi = \frac{\pi_1+\pi_2}{2}$. If $\pi$ is a valid function which is extreme, then $\pi$ is easily seen to be minimal. Therefore extremality is a stronger requirement. An even more stringent definition is that of a {\em facet}. For any valid function $\pi$, define $P(\pi) := \{y \in R_b(\R^n,\Z^n) : \sum_{r\in \R^n} \pi(r)y(r) = 1\}$. A valid function $\pi$ is a {\em facet} if $P(\pi) \subseteq P(\pi')$ implies $\pi = \pi'$ for all valid functions $\pi'$. It can be verified that every facet is extreme~\cite[Lemma 1.3]{basu-hildebrand-koeppe-molinaro:k+1-slope}. It was recently shown that continuous piecewise linear extreme functions are also facets; however, there exist discontinuous piecewise linear extreme functions which are not facets~\cite{koppe2016notions}.%It is not known whether every extreme function is a facet.

We will need a formal notion of piecewise linear functions which we introduce now. A {\em regular polyhedral complex in $\R^n$} is a collection of polyhedra $P_j$, $j\in J$ such that three conditions are satisfied: 1) $\R^n = \bigcup_{j\in J}P_j$, 2) for any $i,j\in J$, $P_i \cap P_j$ is a common face of $P_i$ and $P_j$ and also belongs to the collection, and 3) any bounded subset of $\R^n$ intersects only finitely many polyhedra from the collection. We say a  function $\theta:\R^n\to \R$ is \emph{piecewise linear} if there is a regular polyhedral complex in $\R^n$ such that $\theta$ is affine linear over the interior of each polyhedron in the complex. Note this definition allows for discontinuous piecewise linear functions. For a natural number $k$, we say that a piecewise linear function has $k$ slopes if it has $k$ distinct values for the gradient, where it exists.

%{\red Switch to $n=1$}

\begin{theorem}[Gomory and Johnson \cite{infinite}] \label{thm:2-slope} Let
  $\pi \colon \R \to \R_+$ be a minimal valid function which is continuous, piecewise linear and has only 2 slopes. Then $\pi$ is a facet (and therefore extreme).
  \end{theorem}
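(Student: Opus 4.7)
The plan is to prove $\pi$ is extreme; the stronger facet statement then follows from the cited result of~\cite{koppe2016notions} that continuous piecewise linear extreme functions are facets. So suppose $\pi=\tfrac{1}{2}(\pi_1+\pi_2)$ for valid functions $\pi_1,\pi_2$, and I will show $\pi_1=\pi_2=\pi$. First I would argue each $\pi_i$ is itself minimal: if some $\pi_i$ were dominated by a strictly smaller minimal valid $\tilde\pi_i$, then $\tfrac{1}{2}(\tilde\pi_i+\pi_{3-i})$ would be a valid function strictly below $\pi$ at some point, contradicting the minimality of $\pi$. By Theorem~\ref{thm:minimalinteger} each $\pi_i$ is then subadditive, symmetric and vanishes on $\Z$. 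Two consequences follow: (a) every additivity equality $\pi(x)+\pi(y)=\pi(x+y)$ passes to both $\pi_i$, because averaging two subadditive inequalities to equality forces both to be tight; and (b) since $0\le\pi_i\le 2\pi$ and $\pi$ is continuous at $0$ with $\pi(0)=0$, each $\pi_i$ is continuous at $0$, and subadditivity then propagates continuity to all of $\R$.

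Next I would extract additivity equalities from the 2-slope structure. Translating if necessary, assume $\pi$ has slope $s^+$ on $[0,x_1]$, so $\pi(u)=s^+u$ there. For any slope-$s^+$ interval $J$ of $\pi$, on which $\pi(v)=s^+v+c_J$, and any $u\in(0,x_1)$, $v\in J$ with $v+u\in J$,
\[
\pi(u)+\pi(v)=s^+u+s^+v+c_J=s^+(u+v)+c_J=\pi(u+v),
\]
so additivity holds on a two-dimensional region. Taking first $J=(0,x_1)$, consequence (a) furnishes Cauchy's functional equation for $\pi_i$ on the triangle $\{(u,v):u,v>0,\ u+v<x_1\}$, which combined with continuity forces $\pi_i(u)=r_i^+u$ on $[0,x_1]$ for some constant $r_i^+$. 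Feeding this back into the additivity on $(0,x_1)\times J$ for a general slope-$s^+$ interval $J$ yields $\pi_i(v+u)-\pi_i(v)=r_i^+u$ for all small $u>0$, so the right derivative of $\pi_i$ equals $r_i^+$ throughout $J$, and continuity then makes $\pi_i$ affine of slope $r_i^+$ on $J$. Running the same argument on the initial slope-$s^-$ interval $(-y_1,0)$, available after periodic shifting of $\pi$, gives a common slope $r_i^-$ for $\pi_i$ on every slope-$s^-$ interval of $\pi$.

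Thus $\pi_i$ is piecewise linear with exactly the breakpoints of $\pi$ and only the two slopes $r_i^\pm$. The boundary conditions $\pi_i(1)=0$ and $\pi_i(b)=1$ become the $2\times 2$ linear system
\[
r_i^+L^+ + r_i^-L^- = 0,\qquad r_i^+L^+_{[0,b]} + r_i^-L^-_{[0,b]} = 1,
\]
where $L^\pm$ (respectively $L^\pm_{[0,b]}$) are the total lengths of slope-$s^\pm$ intervals of $\pi$ in $[0,1]$ (respectively $[0,b]$). The pair $(s^+,s^-)$ is one solution; if the coefficient matrix were singular there would exist $\mu\in\R$ with $L^\pm=\mu L^\pm_{[0,b]}$, forcing $1=L^++L^-=\mu b$ and so $\mu=1/b$, after which the first equation would read $0=\mu\cdot 1=1/b$, absurd. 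Hence the system has the unique solution $r_i^\pm=s^\pm$, and $\pi_i=\pi$.

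The principal obstacle is the second paragraph: transferring linearity from the distinguished slope-$s^+$ (respectively slope-$s^-$) interval to every other same-slope interval requires a genuinely two-dimensional region of additivity in each case, so that Cauchy's equation plus continuity (equivalently, a local form of the Interval Lemma) can be applied uniformly. Once this structural conclusion is in hand, the remaining steps are routine verifications and the small linear-algebra calculation above.
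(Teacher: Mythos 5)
The paper states Theorem~\ref{thm:2-slope} as a cited result of Gomory and Johnson and does not prove it, so there is no paper proof to compare against; I will assess your argument on its own and against the machinery the paper does develop. Your proof is essentially correct. The reduction ``each $\pi_i$ must be minimal, hence subadditive, hence $E(\pi)\subseteq E(\pi_i)$'' is sound (and you do not even need $\tilde\pi_i$ to be minimal --- any valid $\pi'\le\pi_i$ with strict inequality somewhere already contradicts minimality of $\pi$, so you can drop the appeal to the existence of a minimal function below an arbitrary valid one). The continuity-at-$0$-via-$0\le\pi_i\le 2\pi$ observation is correct and necessary for your Cauchy argument. The two-stage transfer of linearity --- first on $[0,x_1]$, then right-derivative propagation to every other slope-$s^+$ piece --- works, and your closing $2\times 2$ nonsingularity argument is correct: if the rows were proportional, $\mu=1/b$, and consistency with the right-hand side $(0,1)$ would force $0=\mu\ne 0$.

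Where your route differs from the one the paper's toolkit is designed for: you prove extremality and then invoke the cited 2016 result~\cite{koppe2016notions} to upgrade to facetness. This is logically fine and not circular, but it is indirect. The paper's Facet Theorem (Theorem~\ref{thm:facet}) yields facetness in one shot: one takes an arbitrary \emph{minimal} $\theta$ with $E(\pi)\subseteq E(\theta)$ and shows $\theta=\pi$ by exactly the same Interval-Lemma plus boundary-condition argument you run on each $\pi_i$. Going that way, you get minimality (hence boundedness and the values $\theta(0)=0$, $\theta(b)=1$) for free from the hypothesis, so the entire first paragraph of your proposal --- showing each $\pi_i$ is minimal and continuous --- becomes unnecessary, as does the appeal to~\cite{koppe2016notions}. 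The heart of the matter, which you have, is the observation that $(0,x_1)\times J$ lies in $E(\pi)$ for every same-slope piece $J$, and the resulting rigidity. One small presentational point: rather than quoting Cauchy's equation on a triangle, it is cleaner (and matches Lemma~\ref{lem:interval-lemma} as stated) to apply the Interval Lemma to the rectangle $[0,x_1/2]\times[0,x_1/2]$ for the initial piece, and then to suitable rectangles $U\times V$ with $U\subseteq[0,x_1]$, $V\subseteq J$, $U+V\subseteq J$ for a generic piece $J$, chopping $J$ into subintervals of length at most $x_1$ if needed.
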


\noindent In particular, the above theorem implies that the Gomory mixed-integer function is a facet.

\medskip

For the one-dimensional problem, i.e., $n=1$, extreme valid functions or facets that are piecewise linear and have few slopes received the largest number of hits in the shooting experiments~\cite{GJE2002} and seem to be the most useful in practice. Indeed
Gomory and Johnson \cite{tspace} conjectured that every valid function that is extreme is piecewise linear. This has been disproved by Basu et al.~\cite{bccz08222222}.

Minimal valid functions with 3 slopes are not always extreme. However, Gomory and Johnson constructed an extreme function that is piecewise linear with 3 slopes. It appears to be hard to construct extreme functions that are piecewise linear with many slopes. Indeed, until 2013, all known families of piecewise linear extreme functions had at most 4 slopes. This had led Dey and Richard to pose the question of constructing extreme functions with more than 4 slopes at a 2010 Aussois meeting~\cite{santanu-presentation}. In 2013, Hildebrand, in an unpublished result, constructed an extreme function that is piecewise linear with 5 slopes and very recently K\"oppe and Zhou~\cite{koeppe-zhou:extreme-search} constructed an extreme function that is piecewise linear with 28 slopes. These functions were found through a clever computer search.

K\"oppe and Zhou~\cite{koeppe-zhou:extreme-search} expressed
 the belief that there exist extreme functions that are piecewise linear and have an arbitrary number of slopes (this is also stated as Open Question 2.15 in the survey by Basu, Hildebrand and K\"oppe~\cite{basu2016light}.) We prove this.
More precisely, we show the following:
\begin{theorem}\label{thm:kslopes} Let $b\in \R\setminus \Z$. For $k\geq 2$, there exists a facet (and therefore an extreme valid function) for $I_b$ that is piecewise linear with $k$ slopes.
\end{theorem}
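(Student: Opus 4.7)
The natural approach is induction on $k$. For the base case $k = 2$, we take $\pi_2$ to be the Gomory mixed-integer function of~\eqref{Gom-funct}, which is piecewise linear with the two slopes $1/b$ and $-1/(1-b)$, is minimal by Theorem~\ref{thm:minimalinteger}, and is a facet by Theorem~\ref{thm:2-slope}. For the inductive step, assume we have constructed a piecewise linear facet $\pi_k$ of $I_b$ with exactly $k$ slopes; we aim to build a piecewise linear facet $\pi_{k+1}$ with exactly $k+1$ slopes by a small, carefully engineered perturbation of $\pi_k$.

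To define $\pi_{k+1}$, I would choose a short interval $I \subset [0,1]$ on which $\pi_k$ is affine linear and locally modify $\pi_k$ on $I$ --- and simultaneously, with opposite sign, on the mirrored interval $(b - I) \bmod \Z$ required by the symmetry condition --- by inserting a finite number of new affine pieces parameterized by a small amplitude $\varepsilon > 0$. The shape of the perturbation must be chosen so that: (i) the values of $\pi_k$ outside $I \cup ((b-I) \bmod \Z)$ are unchanged, so that $\pi_{k+1}(\Z) = 0$ and periodicity persist; (ii) exactly one new slope value is introduced, raising the slope count to $k+1$; and (iii) a rich collection of new additivity relations $\pi_{k+1}(x) + \pi_{k+1}(y) = \pi_{k+1}(x+y)$ is created, tying the new slope to the slopes inherited from $\pi_k$.

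Verifying that $\pi_{k+1}$ is minimal amounts to checking subadditivity, since the symmetry condition and $\pi_{k+1}(\Z)=0$ are built in, and Theorem~\ref{thm:minimalinteger} then applies. Subadditivity is checked by a case split according to where $x,y,x+y$ lie relative to the perturbed region: for $\varepsilon$ small enough the perturbation is absorbed by the strict subadditivity slack of $\pi_k$ away from the perturbed region, while on the perturbed region itself it reduces to a direct computation from the prescribed shape. To establish extremality, suppose $\pi_{k+1} = (\pi^1 + \pi^2)/2$ for valid $\pi^1,\pi^2$; every additivity relation satisfied by $\pi_{k+1}$ is inherited by $\pi^1$ and $\pi^2$. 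An interval-lemma argument, fed by the additivities inherited from $\pi_k$, forces $\pi^1$ and $\pi^2$ to share the slope structure of $\pi_k$ outside the perturbed region; the additional additivities engineered in item (iii) then propagate equality across the perturbed region and yield $\pi^1 = \pi^2 = \pi_{k+1}$.

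The principal obstacle is item (iii) of the construction. One must design the finitely many affine pieces on $I$ so that the resulting new additivities, combined with those inherited from $\pi_k$, are abundant enough for the interval-lemma machinery to propagate equality across every piece of the polyhedral complex of $\pi_{k+1}$ and, crucially, to link the single new slope to every previously existing slope. If the perturbation is too simple, some degree of freedom survives and extremality fails; if it is too intricate or too large, subadditivity breaks. Producing one template that works uniformly for all $k \geq 2$ --- and, as the abstract indicates, admits a controlled pointwise limit with infinitely many slopes --- is the true content of the argument and is what forces the specific recursive shape of the sequence $\{\pi_k\}$.
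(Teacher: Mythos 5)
Your high-level outline---recursive construction by local perturbation, mirrored perturbation at $b-I$ to preserve symmetry, case analysis for subadditivity, and the Interval Lemma plus Facet Theorem for facetness---does coincide with the paper's strategy. But there are two concrete gaps in the reasoning that prevent this from being a proof, and they are not merely ``left to the reader''; they are the crux.

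First, the subadditivity argument you sketch does not work as stated. You propose that for the amplitude $\varepsilon$ small enough ``the perturbation is absorbed by the strict subadditivity slack of $\pi_k$ away from the perturbed region.'' Minimal valid functions are saturated with exact additivities: by the symmetry condition every pair $(x, b-x)$ lies in $E(\pi_k)$, and the Interval Lemma arguments in the extremality proof rely precisely on there being two-dimensional swaths of $E(\pi_k)$, i.e.\ no slack at all. So there is no uniform positive slack to absorb a perturbation, and in particular raising $\pi_k$ on an \emph{arbitrary} affine piece $I$ (your ``choose a short interval $I$ on which $\pi_k$ is affine linear'') will in general break subadditivity: if $x+y$ lands in $I$ but $x,y$ do not, you would need strict slack at $(x,y)$, which may fail. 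The paper evades this by perturbing on an interval $I^k_1\cup I^k_2$ \emph{adjacent to $0$}. Then whenever $x+y$ lands in the raised region and $0\le x\le y$, both $x$ and $y$ must themselves lie in that region (or $x=0$), and subadditivity there reduces to a monotonicity statement about the derivative (Case~1 of Proposition~\ref{prop:subadditive}); the other cases are handled using $\pi_k\ge\pi_{k-1}$ on $I^k_1\cup I^k_2$, $\pi_k\le\pi_{k-1}$ on $I^k_4\cup I^k_5$, and symmetry. The location of the perturbation next to $0$ is therefore not a free design choice but the structural fact that makes subadditivity tractable.

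Second, you candidly write that ``producing one template that works uniformly for all $k\ge 2$ \dots is the true content of the argument,'' but then you do not produce it. The paper's $\pi_k$ is pinned down quite precisely: the new slope on $I^k_1$ is $\frac{2^{k-2}-b}{b-b^2}$, the descent on $I^k_2$ reuses the slope $-\frac{1}{1-b}$ (so only one new slope is created), the interval widths shrink geometrically by a factor $1/8$ (which both keeps the perturbation inside the previous $I^{k-1}_1$ and makes the amplitude go to $0$, enabling the uniform limit $\pi_\infty$ of Theorem~\ref{cor:infinite_slopes}), and the new function agrees with $\pi_{k-1}$ on $I^k_3\cup I^k_6$. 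The extremality proof then exploits this exactly: Proposition~\ref{claim:Ik6_general} pins down $\theta$ on $I^k_6$ from the additivities of the old slope $-\frac{1}{1-b}$, and Propositions~\ref{claim:i33_general}--\ref{claim:Ij1_induction_general} propagate the pinning inward one layer $j$ at a time using the nested interval structure. Your ``rich collection of new additivity relations'' is not automatic; it has to be engineered via these nested scales, and without specifying the template one cannot check that the Interval Lemma actually reaches every piece. In short: right scaffold, but the load-bearing specifics (perturbation located at $0$, the $1/8$ scaling, the exact slope values, the layered interval-lemma propagation) are missing and cannot be replaced by a generic smallness argument.
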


 The proof of Theorem~\ref{thm:kslopes} provided here is constructive.
We  define a sequence of functions $\{\pi_k\}_{k=2}^\infty$, where $\pi_2$ is the Gomory mixed-integer function, and $\pi_3$ is an instantiation of a construction of extreme functions that are piecewise linear and have 3 slopes provided by Gomory and Johnson~\cite{tspace}. We first prove  some properties about each  function $\pi_k$ in Section \ref{sect-constr}. In Section \ref{sect-minimal} we use these properties to show that these functions are subadditive and satisfy the symmetry condition. Therefore each  function $\pi_k$ is a minimal valid function, as it  satisfies the conditions of Theorem \ref{thm:minimalinteger}. Section \ref{sect-extreme} is devoted to the proof that each  function $\pi_k$ is a facet.

Our next result states that the function which is the pointwise limit of this sequence is an extreme function that is continuous and has an infinite number of slopes. The proof appears in Section \ref{sec:proof-infinite-slopes}.

\begin{theorem}\label{cor:infinite_slopes} Let $b\in \R\setminus \Z$. There exists a continuous function $\pi_{\infty}$ that is a facet (and therefore extreme) for $I_b$ with an infinite number of slopes (i.e., values for the derivative of $\pi_\infty$).
\end{theorem}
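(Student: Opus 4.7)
The plan proceeds in four stages. First, I would show that $\{\pi_k\}$ is uniformly Cauchy on $[0,1]$: by inspection of the construction in Section~\ref{sect-constr}, $\pi_{k+1}$ should differ from $\pi_k$ only on a ``refinement region'' whose measure shrinks geometrically in $k$, so $\|\pi_{k+1}-\pi_k\|_\infty\to 0$ geometrically. Uniform convergence of continuous, $\Z$-periodic functions then yields a continuous, $\Z$-periodic pointwise limit $\pi_\infty$ on $\R$.

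Second, minimality of $\pi_\infty$ passes to the limit: pointwise convergence preserves nonnegativity, subadditivity, the symmetry $\pi_\infty(x)+\pi_\infty(b-x)=1$, and the vanishing on $\Z$; Theorem~\ref{thm:minimalinteger} then certifies that $\pi_\infty$ is minimal.

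Third, and this is the main obstacle, I must show the facet property. Suppose $P(\pi_\infty)\subseteq P(\pi')$ for some valid function $\pi'$; the goal is to conclude $\pi'=\pi_\infty$. A standard three-element-support argument, which combines an additivity equation $\pi_\infty(u)+\pi_\infty(v)=\pi_\infty(u+v)$ with the symmetry of $\pi_\infty$ to produce a $y\in P(\pi_\infty)\subseteq P(\pi')$, shows that $P$-inclusion forces the inclusion of additivity sets $E(\pi_\infty)\subseteq E(\pi')$, where $E(\pi):=\{(u,v):\pi(u)+\pi(v)=\pi(u+v)\}$. The crux is then to verify that $E(\pi_\infty)$ is rich enough to force $\pi'=\pi_\infty$ via the Gomory--Johnson interval lemma for continuous minimal functions. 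Observe that $(u,v)\in E(\pi_\infty)$ whenever $(u,v)\in E(\pi_k)$ for all sufficiently large $k$. Because each $\pi_k$ is a facet (Section~\ref{sect-extreme}), its additivity set contains a covering family of rectangles; the refinement nature of the construction should ensure that any such rectangle lying outside the (shrinking) perturbation regions of $\pi_j$ for $j\ge k$ persists in every later $\pi_j$ and hence in $\pi_\infty$, yielding additivity rectangles for $\pi_\infty$ at every dilation scale in $[0,1]$. Applying the interval lemma then forces $\pi'$ to be affine on a dense family of intervals with slopes dictated by $\pi_\infty$; continuity of $\pi'-\pi_\infty$, together with the normalizations $\pi'(0)=0$ and $\pi'(b)=1$, forces $\pi'=\pi_\infty$ everywhere.

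Fourth, infinitely many slopes follows from the construction: each new slope introduced at level $k$ is realized on a maximal affine piece of $\pi_k$ that, by the construction, lies outside every subsequent perturbation region, so it is inherited unaltered by $\pi_\infty$; since at least one new slope is added at every level, $\pi_\infty$ admits infinitely many distinct derivative values. The principal difficulty of the whole argument remains stage three: transferring the facet property of the piecewise linear $k$-slope approximants $\pi_k$ to the limit $\pi_\infty$, which is no longer piecewise linear in any finite sense, requires showing that enough additivity is preserved along the sequence to invoke the interval lemma for the continuous limit.
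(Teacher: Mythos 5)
Your stages 1, 2, and 4 track the paper's proof closely: uniform convergence of $\{\pi_k\}$, minimality passing to the pointwise limit (the paper cites Proposition~4 in~\cite{dey1} and Proposition~6.1 in~\cite{basu2016light2} for this), and the observation that each slope introduced at level $k$ survives into $\pi_\infty$. One small imprecision in stage 4: the new slope $\frac{2^{k-2}-b}{b-b^2}$ is introduced on $I^k_1\cup I^k_5$, and part of $I^k_1$ \emph{is} re-perturbed at level $k+1$ (namely $I^{k+1}_1\cup I^{k+1}_2\subsetneq I^k_1$), so it is not the whole maximal affine piece that ``lies outside every subsequent perturbation region'' but only a sub-interval such as $[2b(\tfrac{1}{8})^{k-1}, b(\tfrac{1}{8})^{k-2}]$. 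The paper sidesteps this by noting $\pi_\infty=\pi_k$ on $I^k_3\cup I^k_6$ where $\pi_k$ already exhibits $k-1$ slopes.

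Stage 3 is where there is a genuine gap. You correctly recognize that one should pass from $P(\pi_\infty)\subseteq P(\pi')$ to $E(\pi_\infty)\subseteq E(\pi')$ and then exploit additivity rectangles via the interval lemma. But the interval lemma only yields that $\pi'$ is \emph{affine} on certain intervals with a common (unspecified) slope; it pins down neither the slope value nor any point value. Your proposal jumps from ``affine on a dense family of intervals with slopes dictated by $\pi_\infty$'' to ``$\pi'=\pi_\infty$ everywhere'' by appealing to continuity and normalization at $0$ and $b$, but this leaves unaddressed how the slope is actually determined, how adjacent affine pieces are welded together, and how to propagate equality of values across the whole of $[0,1]$. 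The paper resolves this by a careful inductive chain (Propositions~\ref{claim:Ik6_general}--\ref{claim:Ij1_induction_general} and Lemma~\ref{lem:main-lem}) that first fixes $\theta$ at explicit anchor points (e.g.\ $\theta(\tfrac{b}{2})=\tfrac12$ by symmetry, then $\theta(\tfrac{b}{4})=\tfrac14$, and so on), matches slopes across $I^j_2$ against the already-controlled $I^j_6$, and thereby establishes equality of values interval by interval. Crucially, these lemmas are stated for an \emph{arbitrary} minimal valid $\pi$ that agrees with $\pi_k$ only on $I^j_3\cup I^j_6$, which is precisely what allows the paper to invoke Lemma~\ref{lem:main-lem} directly for $\pi=\pi_\infty$ with $k=j=N_x$ for each $x\neq 0,b$. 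Your proposal would need either to re-derive analogues of this whole chain for $\pi_\infty$ from scratch, or to recognize that the lemmas as stated already apply; as written the ``dense family of rectangles plus continuity'' argument does not by itself close the loop.
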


This also provides a different family of counterexamples to the Gomory-Johnson conjecture that all extreme functions are piecewise linear. In contrast, the previous family of counterexamples from~\cite{bccz08222222} all have 2 slopes. 

Note that in Theorems~\ref{thm:kslopes} and~\ref{cor:infinite_slopes}, we may assume $b\in (0,1)$ since extreme functions are periodic with respect to $\Z$. We give constructions to establish Theorems~\ref{thm:kslopes} and~\ref{cor:infinite_slopes} with $b$ in the interval $(0, \frac{1}{2}]$. One may obtain extreme functions for values of $b\in[\frac{1}{2}, 1)$ by reflecting the constructions about $0$. This is an example of an automorphism introduced by Gomory and later used by Johnson (Theorem 8.2 in~\cite{johnson1974group}, see also Theorem~\ref{thm:reflection} in the Appendix).

%Indeed, one can check that $\pi$ is minimal/extreme/facet for $R_b(\R, \Z)$ when $b\in (0,1/2]$ if and only if $\tilde\pi:\R\to \R$ defined by $\tilde\pi(x) :=  \pi(-x)$ is minimal/extreme/facet for $R_{1-b}(\R, \Z)$, respectively -- see Theorem~\ref{thm:reflection} in the Appendix.

We end the paper by using the {\em sequential-merge} operation invented by Dey and Richard~\cite{dey2} to construct facets for the $n$-dimensional infinite group relaxation (for any $n \geq 1$) with an arbitrary number of slopes. The idea is to use the sequential-merge operation iteratively on the facets constructed for Theorem~\ref{thm:kslopes} and the GMI function from~\eqref{Gom-funct}. See Theorem~\ref{thm:seq_merge} for a detailed statement.

\section{A Construction of k-Slope Functions $\pi_k$}\label{sect-constr}
Let $b\in (0, \frac{1}{2}]$. Let $\pi_2$ be the Gomory mixed-integer function defined by \eqref{Gom-funct}.

\iffalse
\begin{equation*}
\pi_2(x) = \begin{cases} \frac{1}{b}x,&0\leq x< b\\  \frac{1}{1-b}-\frac{1}{1-b}x,&b\leq x< 1\\ \pi_2(x-j),&x\in [j,j+1),~j\in \Z\setminus\{0\}\end{cases}.
\end{equation*}

Observe that $\pi_2$ is the famous Gomory Mixed Integer Cut corresponding to $b$ (see~\cite{gomory1960algorithm}).
\fi

In constructing $\pi_k$ for $k\geq3$, we use the following intervals:
\begin{equation*}
\begin{array}{llll}
I^k_1:= [0, b(\frac{1}{8})^{k-2}], & I^k_2:=& [b(\frac{1}{8})^{k-2}, 2b(\frac{1}{8})^{k-2}],\\\\
I^k_3:= [2b(\frac{1}{8})^{k-2}, b-2b(\frac{1}{8})^{k-2}], & I^k_4 :=& [b-2b(\frac{1}{8})^{k-2}, b-b(\frac{1}{8})^{k-2}],\\\\
I^k_5:= [b-b(\frac{1}{8})^{k-2},b],& I^k_6:=& [b, 1).
\end{array}
\end{equation*}
Given $\pi_{k-1}$, where $k-1\geq 2$, define $\pi_{k}$ to be
\begin{equation*}
\pi_{k}(x) = \begin{cases}
\left(\frac{2^{k-2}-b}{b-b^2}\right)x,&x\in I^k_1\\
\frac{4^{2-k}}{1-b}-\left(\frac{1}{1-b}\right)x,&  x\in I^k_2\\
\frac{1-4^{2-k}}{1-b}-\left(\frac{1}{1-b}\right)x,&x\in I^k_4\\
\frac{1-2^{k-2}}{1-b}+\left(\frac{2^{k-2}-b}{b-b^2}\right)x ,& x\in I^k_5\\
\pi_{k-1}(x),&x\in I^k_3 \cup I^k_6\\
\\ \pi_{k}(x-j),&x\in [j,j+1),~j\in \Z\setminus\{0\}~.
\end{cases}
\end{equation*}

\begin{prop}\label{prop:nonnegativity}
Let $k\geq 2$. Then $\pi_k$ is well-defined, continuous, nonnegative, and $\pi_k(x)=0$ if and only if $x\in \Z$.
\end{prop}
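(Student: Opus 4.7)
The plan is to induct on $k$. The base case $k=2$ is immediate from the closed-form \eqref{Gom-funct}: the Gomory mixed-integer function is continuous on $\R$, piecewise affine on $[0,1)$ with strictly positive slope $1/b$ on $[0,b)$ and strictly negative slope $-1/(1-b)$ on $[b,1)$, and vanishes on $[0,1)$ only at $0$, all of which transfer to the periodic extension. So I take as inductive hypothesis that $\pi_{k-1}$ is well-defined, continuous, nonnegative, and zero exactly on $\Z$, and aim to transfer each property to $\pi_k$ using the recursion.

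The first task is well-definedness and continuity on $[0,1)$. Since $(\tfrac18)^{k-2}\le\tfrac18<\tfrac14$ for $k\ge 3$, the six intervals $I^k_1,\dots,I^k_6$ tile $[0,1)$ with $I^k_3$ nondegenerate, so I only need to verify agreement of the five piecewise formulas at the shared endpoints. At $x=b(\tfrac18)^{k-2}$ the $I^k_1$- and $I^k_2$-formulas are matched by a direct substitution (made transparent by rewriting $(\tfrac18)^{k-2}=2^{6-3k}$ and $4^{2-k}=2^{4-2k}$). The crucial check is at $x=2b(\tfrac18)^{k-2}$, where the $I^k_2$-formula must equal $\pi_{k-1}(x)$; here I would observe that $2b(\tfrac18)^{k-2}\le b(\tfrac18)^{k-3}$, placing the boundary point inside $I^{k-1}_1$, so by the inductive definition $\pi_{k-1}(x) = \bigl(\tfrac{2^{k-3}-b}{b-b^2}\bigr)x$, and a short algebraic identity (again exploiting the cancellation between $2^{k-3}\cdot 2^{7-3k}$ and $2^{4-2k}$) closes the check. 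The two boundaries on the right side of $b$ follow from the symmetric computation. Agreement at $x=b$ between the $I^k_5$-formula and $\pi_{k-1}(b)$ reduces to the auxiliary identity $\pi_{k-1}(b)=1$, which I would carry as an extra inductive bookkeeping item along with the four stated properties (it is trivially true for $\pi_2$). Periodicity, and hence continuity at integer points, is built into the last clause of the definition.

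Nonnegativity is then verified piece by piece. On $I^k_1$ and $I^k_5$ the slope $\tfrac{2^{k-2}-b}{b-b^2}$ is strictly positive for $b\in(0,\tfrac12]$ and $k\ge 2$, and the left endpoint values ($0$ and $\pi_{k-1}(b-b(\tfrac18)^{k-2})\ge 0$ after the inductive boundary match) are nonnegative. On $I^k_2$ and $I^k_4$ the pieces are affine, and the endpoint values are already shown nonnegative by the continuity check. On $I^k_3\cup I^k_6$ nonnegativity is inherited directly from $\pi_{k-1}$. The characterization of the zero set on $[0,1)$ reduces to noting that the only integer there is $0\in I^k_1$ with $\pi_k(0)=0$, that each linear piece vanishes only at a left/right endpoint already shown to carry a positive value (or at $0$), and that $\pi_{k-1}>0$ on $I^k_3\cup I^k_6\subseteq(0,1)$ by induction.

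The main obstacle I anticipate is the interface computation between $I^k_2$ and $I^k_3$ (and symmetrically between $I^k_3$ and $I^k_4$): the coefficients $4^{2-k}$ and $\tfrac{2^{k-2}-b}{b-b^2}$ in the definition of $\pi_k$ are tuned so that exactly one step of the recursion lands the relevant boundary point inside $I^{k-1}_1$ with matching value, so all the arithmetic is forced rather than free. Once this single identity and the auxiliary $\pi_{k-1}(b)=1$ are checked, every remaining claim either follows from the inductive hypothesis applied to $I^k_3\cup I^k_6$ or is an elementary inspection of a linear formula on a subinterval of $[0,b]$.
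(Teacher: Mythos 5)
Your proposal is correct and follows essentially the same route as the paper's own appendix proof: induction on $k$, explicit verification that the piecewise formulas agree at the five interior breakpoints of $[0,1)$ (with the same observation that $2b(\tfrac18)^{k-2}$ lands inside $I^{k-1}_1$ so the $I^k_2$/$I^k_3$ interface reduces to the inductive formula for $\pi_{k-1}$), and positivity on each affine piece from positivity at the endpoints. The only cosmetic difference is that you carry $\pi_{k-1}(b)=1$ as an explicit inductive invariant, whereas the paper leaves this implicit; both are fine.
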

The proof of Proposition~\ref{prop:nonnegativity} is in the Appendix. 

Figure \ref{fig:kslopes} shows $\pi_k$ for various values of $k$ when $b=\frac{1}{2}$. The plots were generated using the help of a software package created by Hong, K\"{o}ppe, and Zhou \cite{sage_hkz}.

\begin{figure}
\centering
\begin{tabular}{cc}
%Switch to the following for black and white
%\includegraphics[width=.5\textwidth]{Figures/2_slope.eps} &
%\includegraphics[width=.5\textwidth]{Figures/3_slope.eps}\\
%(a) k=2 & (b) k=3\\
%\includegraphics[width=.5\textwidth]{Figures/4_slope.eps}&\\
%(c) k = 4 &

%Switch to the following for color
\includegraphics[width=.5\textwidth]{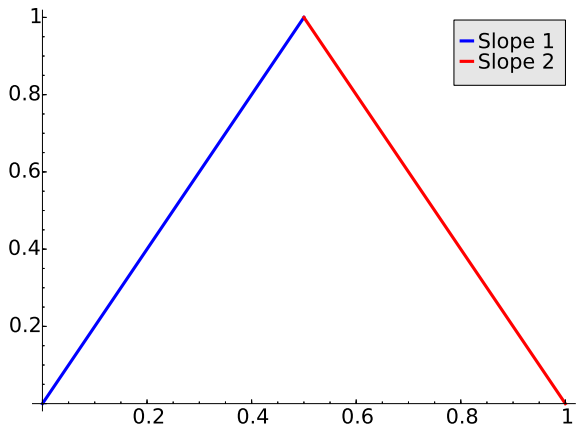} &
\includegraphics[width=.5\textwidth]{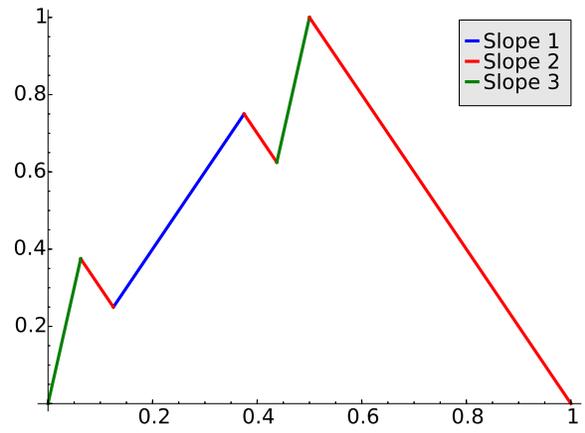}\\
(a) k=2 & (b) k=3\\
\includegraphics[width=.5\textwidth]{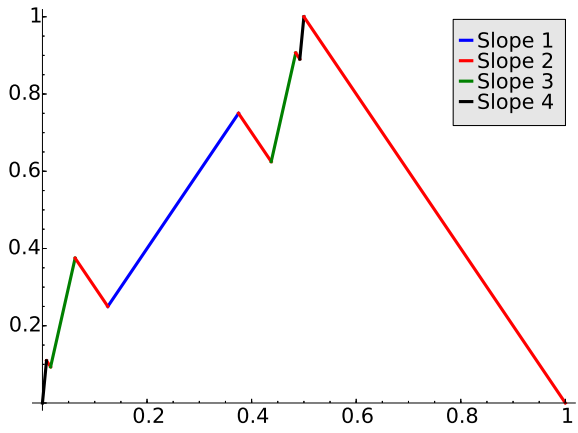}&\\
(c) k = 4 &
\end{tabular}
\caption{Plots of $\pi_k$ for $k\in \{1,2,3\}$ and $b = \frac{1}{2}$.}
\label{fig:kslopes}
\end{figure}

Observe that $\pi_k$ is built recursively with the Gomory mixed-integer function as the base case. Intuitively, $\pi_k$ is created by adding  to $\pi_{k-1}$  a perturbation on a small interval to the right of $0$ and applying a symmetric perturbation on an interval to the left of $b$; the interval $[b,1)$ is kept intact. These small perturbations allow $\pi_k$ to maintain much of the structure of $\pi_{k-1}$, but the number of distinct slopes is increased by one.
We collect some useful properties of $\pi_k$ in Propositions~\ref{prop:properties} and \ref{prop:slopes}.

%The following is a collection of useful properties about $\pi_k$ when $k\geq 3$.

\begin{prop}\label{prop:properties} Let $k\geq 3$. Then
\begin{enumerate}[(i)]
\item $I^k_1\cup I^k_2 \subsetneq I^{k-1}_1$ and $I^k_4\cup I^k_5 \subsetneq I^{k-1}_5$.
\item If $x\in I^k_3\cup I^k_6$, then $\pi_k(x) = \pi_{k-1}(x)$. If $x\in I^k_1\cup I^k_2$, then $\pi_k(x) \geq \pi_{k-1}(x)$. If $x\in I^k_4\cup I^k_5$, then $\pi_k(x) \leq \pi_{k-1}(x)$.
%\item $-\pi_k$ is convex on $I^k_1\cup I^k_2$ and $\pi_k$ is convex on $I^k_4\cup I^k_5$.
%\item[(iv)] Let $y\in I^k_4\cup I^k_5$ such that $y\neq b$ and take $x\in [0,b-y]$. Then $\pi_k(x+y)\leq \pi_k(y)+\left(\frac{1-\pi_k(y)}{b-y}\right)x$. Also, $\left(\frac{\pi_k(b-y)}{b-y}\right)x \leq \pi_k(x)$.
\item For any $x\in (0,1)\setminus\{b\}$, there exists some natural number $N_x$ such that $x \in I^{N_x}_3\cup I^{N_x}_6$ and $\pi_{k}(x) = \pi_{N_x}(x)$ whenever $k\geq N_x$.
\end{enumerate}
\end{prop}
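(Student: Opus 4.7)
The plan is to prove the three items in order, relying on the explicit interval formulas and the piecewise definition of $\pi_k$.

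For (i), I would simply compute: $I^k_1 \cup I^k_2 = [0,\, 2b(1/8)^{k-2}]$ while $I^{k-1}_1 = [0,\, b(1/8)^{k-3}]$, so the strict inclusion reduces to $2(1/8)^{k-2} < (1/8)^{k-3}$, i.e.\ $2 < 8$. The second inclusion is the mirror of the first about the point $b/2$ and follows from the same arithmetic.

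For (ii), equality on $I^k_3 \cup I^k_6$ is immediate from the recursion. On the remaining intervals my strategy is to pin down a shared endpoint and compare slopes. On $I^k_1$ both functions pass linearly through the origin, and $\pi_k$ has the larger slope $(2^{k-2}-b)/(b-b^2)$, so $\pi_k \geq \pi_{k-1}$. On $I^k_2$ I will check algebraically, using the identity $4^{2-k} = 2^{k-2}(1/8)^{k-2}$, that $\pi_k$ and $\pi_{k-1}$ coincide at the right endpoint $2b(1/8)^{k-2}$; since on $I^k_2$ the slope of $\pi_k$ is $-1/(1-b)<0$ while $\pi_{k-1}$ still has positive slope (as $I^k_2 \subset I^{k-1}_1$ by (i)), moving leftward from the common endpoint forces $\pi_k \geq \pi_{k-1}$. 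The inequality on $I^k_4 \cup I^k_5$ is obtained by mirroring around $b/2$: on $I^k_5$ both functions equal $1$ at $x=b$, and the steeper positive slope of $\pi_k$ pulls it below $\pi_{k-1}$ as $x$ decreases; on $I^k_4$ the two functions agree at the left endpoint $b - 2b(1/8)^{k-2}$, and then $\pi_k$ decreases while $\pi_{k-1}$ increases as $x$ grows, giving $\pi_k \leq \pi_{k-1}$.

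For (iii) I will split on whether $x$ lies above or below $b$. If $x \in (b,1)$, set $N_x = 3$: then $x \in I^3_6 = [b,1)$, which equals $I^k_6$ for every $k \geq 3$, so iterating $\pi_k(x) = \pi_{k-1}(x)$ yields $\pi_k(x) = \pi_3(x)$. If $x \in (0,b)$, the intervals $I^k_3 = [2b(1/8)^{k-2},\, b-2b(1/8)^{k-2}]$ are nested increasingly in $k$ and their union is all of $(0,b)$; hence a smallest $N_x \geq 3$ exists with $x \in I^{N_x}_3$. For every $k \geq N_x$ we then have $x \in I^k_3$, and induction on the recursion $\pi_k = \pi_{k-1}$ (which is valid on $I^k_3$) yields $\pi_k(x) = \pi_{N_x}(x)$.

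The main obstacle is the endpoint matching underlying (ii): verifying $\pi_k = \pi_{k-1}$ at $x = 2b(1/8)^{k-2}$ and at its reflection $x = b - 2b(1/8)^{k-2}$ is purely computational but requires careful bookkeeping of the constants $4^{2-k}$, $2^{k-2}$, and $(1/8)^{k-2}$. Once this matching is in hand, the slope comparisons on each adjacent piece finish (ii), and parts (i) and (iii) are essentially just careful arithmetic with the nested interval lengths.
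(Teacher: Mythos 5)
Your proposal is correct. Parts (i) and (iii) follow essentially the same route as the paper: (i) reduces to the arithmetic $2 < 8$, and (iii) uses the nesting $I^k_3 \subseteq I^{k+1}_3$ with $\bigcup_k I^k_3 = (0,b)$ together with $I^k_6 = [b,1)$ being independent of $k$.

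For part (ii) on $I^k_2$ and $I^k_4$, your argument is genuinely different from the paper's. The paper proves $\pi_k \geq \pi_{k-1}$ on $I^k_2$ via a direct chain of algebraic inequalities applied to the defining formulas, bounding $\frac{1}{1-b}(4^{2-k}-x)$ from below in two steps using $x \leq 2b(1/8)^{k-2}$. You instead observe that $\pi_k$ and $\pi_{k-1}$ coincide at the shared right endpoint $2b(1/8)^{k-2}$ (a fact the paper itself verifies, but only in the Appendix while establishing that $\pi_k$ is well-defined, cf.\ its equation~\eqref{eq:well_defined_4}), and then compare signs of slopes on the two sides of that point: on $I^k_2$, $\pi_k$ has negative slope $-\tfrac{1}{1-b}$ while $\pi_{k-1}$ has positive slope there because $I^k_2 \subset I^{k-1}_1$. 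Your reasoning on $I^k_1$ (common value $0$ at the origin, $\pi_k$ steeper) and the mirrored cases $I^k_5$ (common value $1$ at $b$) and $I^k_4$ (common value at $b - 2b(1/8)^{k-2}$) are all sound. This slope-comparison argument is a bit more geometric and, once the endpoint agreements are checked, avoids the algebraic manipulations; the trade-off is that those endpoint checks are themselves small computations, so the total amount of arithmetic is comparable. Both proofs are valid.
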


\begin{proof}~

\begin{enumerate}[$(i)$]
\item Observe that $$b\left(\frac{1}{8}\right)^{k-3} = 8b\left(\frac{1}{8}\right)^{k-2}> 2b\left(\frac{1}{8}\right)^{k-2}~.$$
By the definitions of $I^k_1, I^k_2$ and $I^{k-1}_1$, it follows that $I^k_1\cup I^k_2 \subsetneq I^{k-1}_1$. A similar argument shows that $I^k_4\cup I^k_5 \subsetneq I^{k-1}_5$.

\vspace{.1 in}
\item Let $x\in [0,1)$. If $x\in  I^k_3\cup I^k_6$, then $\pi_k(x) = \pi_{k-1}(x)$ by definition. If $x\in I^k_1$, then from $(i)$ it follows that $x\in I^{k-1}_1$. Note that $$\left(\frac{2^{k-2}-b}{b-b^2}\right)x \geq \left(\frac{2^{k-3}-b}{b-b^2}\right)x~,$$ and so $\pi_k(x)\geq \pi_{k-1}(x)$. If $x\in I^k_2$, then again from $(i)$, $x\in I^{k-1}_1$ and it follows that
\begin{equation*}
\begin{array}{rcl}
\frac{4^{2-k}}{1-b}-\left(\frac{1}{1-b}\right)x & = & \left(\frac{1}{1-b}\right)\left(4^{2-k}-x\right) \\
& \geq &\left(\frac{1}{1-b}\right)\left(4^{2-k} - 2b\left(\frac{1}{8}\right)^{k-2}\right)\qquad \text{since }x\in I^k_2\\
& = & \left(\frac{1}{b-b^2}\right)\left(2^{k-3}\left(2b\left(\frac{1}{8}\right)^{k-2}\right)-b\left(2b\left(\frac{1}{8}\right)^{k-2}\right)\right)\\
& \geq &\left(\frac{2^{k-3}-b}{b-b^2}\right)x \quad\qquad\qquad\qquad\qquad\text{since }x\in I^k_2.
\end{array}
\end{equation*}

Hence $\pi_k(x)\geq \pi_{k-1}(x)$ on $I_1^k\cup I^k_2$. A similar argument shows that $\pi_k(x)\leq \pi_{k-1}(x)$ on $I_4^k\cup I^k_5$.

%\item By definition, $\pi_k$ is affine linear over $I^k_1$ with positive slope and affine linear over $I^k_2$ with negative slope. Since $\pi_k$ is continuous, it is therefore concave. So $-\pi_k$ is a convex function over $I^k_1\cup I^k_2$. The same argument shows that $\pi_k$ is convex over $I^k_4\cup I^k_5$.

%\item Fix $y\in  I^k_4\cup I^k_5\setminus\{b\}$. It follows by assumption that $x+y\in [y,b]$. Therefore $\lambda = \frac{b-x-y}{b-y} \in [0,1]$. Using the facts that $\pi_k$ is convex over $[y,b]$ from (iii) and $\pi_k(b)=1$, we obtain
%$$\pi_k(x+y) = \pi_k(\lambda y+(1-\lambda)b)\leq \lambda\pi_k(y)+(1-\lambda)\pi_k(b) = \pi_k(y)+\left(\frac{1-\pi_k(y)}{b-y}\right)x.$$
%
%The other inequality follows from the fact that $-\pi_k$ is convex over $I^k_1\cup I^k_2$ by (iii).

\item
Notice that $I^k_3\subseteq I^{k+1}_3$ for every natural number $k$, and as $k\to \infty$, %the right endpoint of $I_1^k\cup I_2^k$ converges to $0$ and the left endpoint of $I_4^k\cup I_5^k$ converges to $b$. Equivalently,
$I^k_3$ converges to $(0,b)$. Thus, there exists some natural number $N_x$ such that $x \in I^{k}_3\cup I^{k}_6$ for every natural number $k\geq N_x$. By the definition of $\pi_{k}$, if $k\geq N_x$, then $\pi_k(x) = \pi_{N_x}(x)$. \qed %Combining the definition of $\pi_k$ and (i) above, the result follows.
\end{enumerate}
\end{proof}%End properties

\begin{prop}\label{prop:slopes} For each integer $k\geq 2$, the function $\pi_k$ is piecewise linear and has $k$ slopes taking values $-\frac{1}{1-b}$ and $\{\frac{2^{i-2}-b}{b-b^2}\}_{i=2}^k$. Moreover, if $k\geq 3$, then $\pi_k$ has the $k-2$ slopes $\{\frac{2^{i-2}-b}{b-b^2}\}_{i=2}^{k-1}$ on $I^k_3$ and the slope $-\frac{1}{1-b}$ on $I^k_6$.
\end{prop}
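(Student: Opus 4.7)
The plan is to proceed by induction on $k$. The base case $k=2$ is immediate from \eqref{Gom-funct}: on $[0,b)$ the slope is $\tfrac{1}{b}$ and on $[b,1)$ the slope is $-\tfrac{1}{1-b}$. Rewriting $\tfrac{1}{b}=\tfrac{1-b}{b(1-b)}=\tfrac{2^{0}-b}{b-b^{2}}$ identifies it as the $i=2$ slope in the claimed family, so $\pi_2$ has exactly the two claimed slopes.

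For the inductive step with $k\geq 3$, I would read off the affine pieces of $\pi_k$ directly from its recursive definition: on $I^k_1$ and $I^k_5$ the slope is $\tfrac{2^{k-2}-b}{b-b^{2}}$; on $I^k_2$ and $I^k_4$ the slope is $-\tfrac{1}{1-b}$; and on $I^k_3\cup I^k_6$ the function coincides with $\pi_{k-1}$, which is piecewise linear by induction, so $\pi_k$ itself is piecewise linear. Since $I^k_6=[b,1)=I^{k-1}_6$, the inductive hypothesis (or the base case when $k=3$) shows that $\pi_{k-1}$ has slope $-\tfrac{1}{1-b}$ there.

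The bulk of the work is identifying the slopes that appear on $I^k_3$. The plan is to partition $I^k_3$ according to how it meets the subintervals $I^{k-1}_1,\ldots,I^{k-1}_5$ of $[0,b]$. When $k=3$ the interval $I^3_3=[b/4,3b/4]$ lies entirely in $[0,b)$, so $\pi_3=\pi_2=x/b$ there, giving the single slope $\tfrac{1}{b}=\tfrac{2^{0}-b}{b-b^{2}}$, which is exactly the $k-2=1$ positive slope claimed. When $k\geq 4$, a direct computation using Proposition~\ref{prop:properties}(i) and the explicit endpoints of the intervals shows that $I^{k-1}_2\cup I^{k-1}_3\cup I^{k-1}_4\subset I^k_3$ and that $I^k_3\cap I^{k-1}_1$ and $I^k_3\cap I^{k-1}_5$ are subintervals of positive length on which $\pi_{k-1}$ has slope $\tfrac{2^{k-3}-b}{b-b^{2}}$. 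Invoking the inductive ``moreover'' hypothesis for $\pi_{k-1}$ on $I^{k-1}_3$, which contributes the positive slopes $\{\tfrac{2^{i-2}-b}{b-b^{2}}\}_{i=2}^{k-2}$, and combining with the new contribution $i=k-1$ from $I^{k-1}_1\cup I^{k-1}_5$, the positive slopes appearing on $I^k_3$ are exactly $\{\tfrac{2^{i-2}-b}{b-b^{2}}\}_{i=2}^{k-1}$, as claimed.

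Pooling contributions from all six subintervals, the complete set of slopes of $\pi_k$ is $\{-\tfrac{1}{1-b}\}\cup\{\tfrac{2^{i-2}-b}{b-b^{2}}\}_{i=2}^{k}$. Distinctness is immediate: the positive slopes are strictly increasing in $i$, and each is positive because $b\in(0,\tfrac{1}{2}]$ forces $2^{i-2}-b>0$ for $i\geq 2$, so they are all distinct from the negative slope $-\tfrac{1}{1-b}$; this gives exactly $k$ slopes and closes the induction. The main obstacle is the bookkeeping for the $I^k_3$ step: verifying the inclusions and positive-length intersections among the intervals $I^k_j$ and $I^{k-1}_j$ reduces to routine but careful inequalities in the parameters $b(\tfrac{1}{8})^{k-2}$ and $b(\tfrac{1}{8})^{k-3}$.
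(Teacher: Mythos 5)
Your proof is correct and follows essentially the same inductive strategy as the paper's: base case from the GMI formula, and for $k\geq 3$ read off the new slope $\tfrac{2^{k-2}-b}{b-b^2}$ on $I^k_1\cup I^k_5$, the slope $-\tfrac{1}{1-b}$ on $I^k_2\cup I^k_4$, and inherit the remaining slopes from $\pi_{k-1}$ on $I^k_3\cup I^k_6$. The main place you add detail is the ``moreover'' clause: you explicitly partition $I^k_3$ by its intersections with $I^{k-1}_1,\dots,I^{k-1}_5$, note that $I^{k-1}_2\cup I^{k-1}_3\cup I^{k-1}_4\subset I^k_3$ and that $I^k_3\cap I^{k-1}_1$ and $I^k_3\cap I^{k-1}_5$ are nondegenerate, and so conclude that the positive slopes $\{\tfrac{2^{i-2}-b}{b-b^2}\}_{i=2}^{k-1}$ are all actually attained on $I^k_3$; the paper argues this more indirectly (observing only that the single new slope is confined to $I^k_1\cup I^k_5$ and leaving the ``attained on $I^k_3$'' step implicit). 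Your version is therefore slightly more explicit but not a different argument; both are correct.
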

\begin{proof}
We proceed by induction. For $\pi_2$ and $\pi_3$, the result is readily verified by the definitions. So, assume that for $k-1\geq 3$, $\pi_{k-1}$ is piecewise linear with $k-1$ slopes and has the $k-3$ slopes $\{\frac{2^{i-2}-b}{b-b^2}\}_{i=2}^{k-2}$ on $I^{k-1}_3$ and the slope $-\frac{1}{1-b}$ on $I^{k-1}_6$. %, {\blue with $k-1$ slopes on $I^{k-1}_3\cup I^{k-1}_6$.}

%Observe that for each value of $j$, $\pi_j$ has a slope of $-\frac{1}{1-b}$ on the interval $(b,1)$. Therefore on the interval $[0,b)$, the function $\pi_{k-1}$ must take on slope values $\{\frac{2^{i-2}-b}{b-b^2}\}_{i=2}^{k-1}$ ($\pi_{k-1}$ also admits a slope of $-\frac{1}{1-b}$ on some subintervals contained in $[0, b)$). 
Consider $\pi_k$. The fact that $\pi_k$ is piecewise linear follows from the definition of $\pi_k$, and the induction hypothesis that $\pi_{k-1}$ is piecewise linear.%, and the fact that $\pi_k$ is continuous by Proposition~\ref{prop:nonnegativity}. 

It is left to consider the slope values of $\pi_k$. By Proposition \ref{prop:properties} $(ii)$, $\pi_k=\pi_{k-1}$ everywhere except $I^k_1\cup I^k_2$ and $I^k_4\cup I^k_5$, on which $\pi_k$ takes on slope values $\frac{2^{k-2}-b}{b-b^2}$ and $-\frac{1}{1-b}$ by definition. Since $I^k_1\cup I^k_2\subsetneq I^{k-1}_1$ and $I^k_4\cup I^k_5\subsetneq I^{k-1}_5$ by Proposition \ref{prop:properties} $(i)$, it follows from the induction hypothesis that $\pi_k$ also has slopes taking values $\{\frac{2^{i-2}-b}{b-b^2}\}_{i=2}^{k-1}$. Thus, $\pi_k$ takes on slopes values $\{\frac{2^{i-2}-b}{b-b^2}\}_{i=2}^{k}$ and $-\frac{1}{1-b}$.

%It is left to show that $\pi_k$ is piecewise linear. By Proposition \ref{prop:properties} $(ii)$ and the induction hypothesis, it is sufficient to show that $\pi_k$ is piecewise linear on $I^k_1\cup I^k_2$ and $I^k_4\cup I^k_5$, and that $\pi_k\left(2b\left(\frac{1}{8}\right)^{k-2}\right)= \pi_{k-1}\left(2b\left(\frac{1}{8}\right)^{k-2}\right)$ and $\pi_k\left(b-2b(\frac{1}{8})^{k-2}\right) = \pi_{k-1}\left(b-2b\left(\frac{1}{8}\right)^{k-2}\right)$. Note that $\pi_k$ is piecewise linear on $I^k_1\cup I^k_2$ and $I^k_4\cup I^k_5$ by definition. It is straightforward to check that
%$\pi_k\left(2b\left(\frac{1}{8}\right)^{k-2}\right) =\pi_{k-1}\left(2b\left(\frac{1}{8}\right)^{k-2}\right) $ and
%$\pi_k\left(b-2b\left(\frac{1}{8}\right)^{k-2}\right)  = \pi_{k-1}\left(b-2b\left(\frac{1}{8}\right)^{k-2}\right)$.
%Thus $\pi_k$ is piecewise linear, as desired.

Finally, by definition, $\pi_k = \pi_{k-1}$ on $I^k_3\cup I^k_6$. Moreover, since $\pi_k$ has slope values  $\{\frac{2^{i-2}-b}{b-b^2}\}_{i=2}^k$, and $\pi_{k-1}$ has slope values $\{\frac{2^{i-2}-b}{b-b^2}\}_{i=2}^{k-1}$, the only new slope in $\pi_k$ is $\frac{2^{k-2}-b}{b-b^2}$, which only appears on $I^k_1\cup I^k_5$. Thus, $\pi_k$ has the $k-2$ slopes $\{\frac{2^{i-2}-b}{b-b^2}\}_{i=2}^{k-1}$ on $I^k_3$ and the slope $-\frac{1}{1-b}$ on $I^k_6$.\qed
\end{proof}%End k-slopes and pwl

\section{Proof of Minimality of $\pi_k$}\label{sect-minimal}

In the proof of Theorem~\ref{thm:kslopes}, we will show that $\pi_k$ is a facet using the so-called Facet Theorem -- see Theorem~\ref{thm:facet} in Section~\ref{sect-extreme}. Applying the Facet Theorem to $\pi_k$ requires that $\pi_k$ be a minimal valid function for $I_b$, which we verify in this section. Since by definition $\pi_k$ is nonnegative, $\pi_k(0)=0$, and $\pi_k$ is periodic, by Theorem \ref{thm:minimalinteger}, it is sufficient to show that  (a) $\pi_k(x) = \pi_k(b-x)$ for all $x\in [0,1)$, i.e. that $\pi_k$ satisfies the symmetry condition, and (b) $\pi_k$ is subadditive. We show (a) and (b) in Propositions \ref{prop:symmetric} and \ref{prop:subadditive}, respectively.

\begin{prop}\label{prop:symmetric} $\pi_k$ satisfies the symmetry condition for all $k \geq 2$. \end{prop}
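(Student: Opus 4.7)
The plan is to proceed by induction on $k$. The base case $k=2$ is the Gomory mixed-integer function of \eqref{Gom-funct}, whose symmetry $\pi_2(x) + \pi_2(b-x) = 1$ is standard and can be checked directly from the piecewise formula. For the inductive step I would assume $\pi_{k-1}$ satisfies the symmetry condition and exploit the structural observation that the partition of $[0,1)$ by $I^k_1, \ldots, I^k_6$ is mirror-symmetric about $b/2$: explicitly, $b - I^k_1 = I^k_5$, $b - I^k_2 = I^k_4$, and $b - I^k_3 = I^k_3$. Since $\pi_k$ is periodic modulo $\Z$, it suffices to verify $\pi_k(x) + \pi_k(b-x) = 1$ for $x \in [0,1)$, which I would handle by cases based on the interval containing $x$.

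Two of the cases reduce immediately to the inductive hypothesis via Proposition~\ref{prop:properties}(ii), which gives $\pi_k = \pi_{k-1}$ on $I^k_3 \cup I^k_6$. For $x \in I^k_3$ the symmetric point $b-x$ also lies in $I^k_3$, so $\pi_k(x) + \pi_k(b-x) = \pi_{k-1}(x) + \pi_{k-1}(b-x) = 1$. For $x \in I^k_6 = [b,1)$ the point $x=b$ is trivial since $\pi_k(b)=1$ and $\pi_k(0)=0$; for $x \in (b,1)$ periodicity of $\pi_k$ gives $\pi_k(b-x) = \pi_k(b-x+1)$ with $b-x+1 \in (b,1) \subseteq I^k_6$, so once more the agreement with $\pi_{k-1}$ on $I^k_6$, combined with the inductive hypothesis and periodicity of $\pi_{k-1}$, closes the case.

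The remaining cases $x \in I^k_1$ and $x \in I^k_2$ are the substantive part, but they reduce to short algebraic identities involving the affine pieces in the definition of $\pi_k$. For $x \in I^k_1$ I pair the formula $\left(\frac{2^{k-2}-b}{b-b^2}\right)x$ on $I^k_1$ with the formula on $I^k_5$ applied to $b-x$; the linear terms combine to $\left(\frac{2^{k-2}-b}{b-b^2}\right)b = \frac{2^{k-2}-b}{1-b}$, which added to the constant $\frac{1-2^{k-2}}{1-b}$ gives exactly $1$. Similarly, for $x \in I^k_2$ the slopes $-\frac{1}{1-b}$ on $I^k_2$ and $I^k_4$ combine to contribute $-\frac{b}{1-b}$, while the constants $\frac{4^{2-k}}{1-b}$ and $\frac{1-4^{2-k}}{1-b}$ sum to $\frac{1}{1-b}$, again yielding $1$.

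The only real obstacle is careful bookkeeping: keeping track of shared endpoints between adjacent intervals and the periodic wrap-around for $I^k_6$, and verifying that the precise constants chosen in the definition of $\pi_k$ on $I^k_1, I^k_2, I^k_4, I^k_5$ are exactly what is needed to make these two telescoping identities hold. No new conceptual ingredient beyond the inductive hypothesis is required.
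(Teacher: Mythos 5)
Your proposal is correct and follows essentially the same route as the paper: induction on $k$ with the base case $\pi_2$, mirror symmetry of the interval partition about $b/2$, direct algebraic verification on $I^k_1$ and $I^k_2$ (transferring to $I^k_5$ and $I^k_4$ by symmetry), and reduction of the remaining cases to the inductive hypothesis via $\pi_k = \pi_{k-1}$ on $I^k_3 \cup I^k_6$. Your treatment of the periodic wrap-around on $I^k_6$ is slightly more explicit than the paper's, which simply notes that $x \notin I^k_1 \cup I^k_2 \cup I^k_4 \cup I^k_5$ implies $b-x \notin I^k_1 \cup I^k_2 \cup I^k_4 \cup I^k_5$ and invokes induction, but this is a matter of exposition rather than substance.
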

\begin{proof}
We proceed by induction on $k$. The Gomory mixed-integer function is known to be minimal, and hence $\pi_2$ is symmetric. Assume $\pi_{k-1}$ satisfies the symmetry condition for $k-1\geq 2$ and consider $x\in [0,1)$. Observe that $x\in I^k_1$ if and only if $b-x\in I^k_5$.
Therefore if $x\in I^k_1$, then $$\pi_k(x)+\pi_k(b-x) =\left(\frac{2^{k-2}-b}{b-b^2}\right)x+\frac{1-2^{k-2}}{1-b}+\left(\frac{2^{k-2}-b}{b-b^2}\right)(b-x) = 1~.$$
A similar argument can be used to show that $\pi_k$ satisfies the symmetry condition on the intervals $I^k_2$ and $I^k_4$. If $x\not\in I_1^k\cup I_2^k\cup I^k_4\cup I^k_5$, then $b-x\not\in I_1^k\cup I_2^k\cup I^k_4\cup I^k_5$, and so symmetry holds by induction.\qed
\end{proof}%End Symmetry

\begin{prop}\label{prop:subadditive} $\pi_k$ is subadditive for all $k \geq2$. \end{prop}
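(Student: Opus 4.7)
The plan is to prove Proposition \ref{prop:subadditive} by induction on $k$. The base case $k=2$ is the subadditivity of the Gomory mixed-integer function $\pi_2$, which is a minimal valid function and hence subadditive by Theorem \ref{thm:minimalinteger}.

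For the inductive step, assume $\pi_{k-1}$ is subadditive and fix $x,y\in[0,1)$ (using periodicity of $\pi_k$); set $z:=(x+y)\bmod 1$ and $\delta:=\pi_k-\pi_{k-1}$. By Proposition \ref{prop:properties}(ii), $\delta$ vanishes on $C:=I^k_3\cup I^k_6$, is $\geq 0$ on $A:=I^k_1\cup I^k_2$, and is $\leq 0$ on $B:=I^k_4\cup I^k_5$; applying Proposition \ref{prop:symmetric} to both $\pi_k$ and $\pi_{k-1}$ gives the antisymmetry $\delta(b-t)=-\delta(t)$. Rewriting the target inequality as
\[
\bigl[\pi_{k-1}(x)+\pi_{k-1}(y)-\pi_{k-1}(z)\bigr] + \bigl[\delta(x)+\delta(y)-\delta(z)\bigr] \;\geq\; 0
\]
reveals that the first bracket is nonnegative by the inductive hypothesis, so only cases in which the second bracket is negative are nontrivial.

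I would then run a case analysis on the triple of types $(T_x,T_y,T_z)\in\{A,B,C\}^3$, cutting down the work with two symmetries: the swap $(x,y)\leftrightarrow(y,x)$ and the \emph{flip} $(x,y,z)\mapsto(b-z,y,b-x)$ modulo $1$, which sends a valid triple to a valid triple and preserves the subadditivity slack because $\pi_k(b-t)=1-\pi_k(t)$, while exchanging the $A$ and $B$ labels at the $x$- and $z$-slots. The sign-preserving cases, namely $T_x,T_y\in\{A,C\}$ with $T_z\in\{B,C\}$, are immediate since the $\delta$-bracket is already nonnegative. The case $(T_x,T_y,T_z)=(A,A,A)$ is handled directly by the elementary fact that a concave function vanishing at the origin is subadditive on its domain, applied to $\pi_k|_A$ (which is concave since its slope drops from $(2^{k-2}-b)/(b-b^2)>0$ on $I^k_1$ to $-1/(1-b)<0$ on $I^k_2$). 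In each remaining case I would plug in the explicit linear formulas for $\pi_k$ on $A\cup B$ and for $\pi_{k-1}$ on $I^{k-1}_1\supseteq A$ and $I^{k-1}_5\supseteq B$ (Proposition \ref{prop:properties}(i)), turning the desired inequality into a linear inequality on a small polyhedral region in $(x,y)$-space and verifying it at the finite list of extreme points of that region.

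The main obstacle is the length and bookkeeping of the case analysis rather than any single difficult inequality. The geometry is favorable: the perturbation interval shrinks by a factor of $8$ from $\pi_{k-1}$ to $\pi_k$ while the newly introduced slope only grows by a factor of $2$, so the subadditivity slack that $\pi_{k-1}$ already carries on the neighborhoods $I^{k-1}_1\cup I^{k-1}_5$ containing the perturbation region is more than enough to absorb the sign-bad perturbations introduced in passing from $\pi_{k-1}$ to $\pi_k$.
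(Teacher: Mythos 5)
Your perturbation decomposition $\pi_k = \pi_{k-1} + \delta$ is a legitimate alternative organization, and the parts you make explicit are correct: the ``sign-preserving'' observation reproduces the paper's Case 2 and the first subcase of Case 3 verbatim, and the concavity argument for the type triple $(A,A,A)$ is exactly the paper's Case 1. Where the routes diverge is in the remaining cases, which you defer to ``plug in formulas and verify on extreme points.'' The paper instead disposes of every situation with $y\ge b$ (the Claim) and every situation with $x+y\ge b$ (Case 4) by a single integral argument exploiting the global fact that the minimum slope $-\tfrac{1}{1-b}$ is attained on all of $[b,1)$, and handles the type triple $(A,B,B)$ (Case 3, second subcase) by reducing to Case 1 via $\pi_k(z)=1-\pi_k(b-z)$. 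In your scheme those situations splinter into several additional type triples (with wrap-around further subdividing the polyhedral regions, since each of $A$ and $B$ splits into two linear pieces), so the tradeoff is more brute-force linear checks in exchange for avoiding the integral trick.

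There is, however, a genuine problem with the heuristic you offer in place of those checks. You claim that the subadditivity slack $\pi_{k-1}$ already carries near the perturbation region is ``more than enough'' to absorb the sign-bad $\delta$-terms. It is not: the inequality is tight. Take $b=\tfrac12$, $k=3$, and $(x,y)=\bigl(\tfrac18,\tfrac{15}{16}\bigr)$, so $z=\tfrac1{16}$ and the type triple is $(C,C,A)$, a deferred case. Then the $\pi_2$-slack is $\pi_2(\tfrac18)+\pi_2(\tfrac{15}{16})-\pi_2(\tfrac1{16})=\tfrac14+\tfrac18-\tfrac18=\tfrac14$, while $\delta\bigl(\tfrac1{16}\bigr)=\pi_3(\tfrac1{16})-\pi_2(\tfrac1{16})=\tfrac38-\tfrac18=\tfrac14$, so $\pi_3(\tfrac18)+\pi_3(\tfrac{15}{16})-\pi_3(\tfrac1{16})=0$ exactly. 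There is no slack to spare and no soft absorption argument available; each remaining type triple genuinely requires the explicit linear verification you sketch but do not carry out. As a plan the approach works, but the substantive cases are left unproved, and the heuristic that would shortcut them fails at the boundary.
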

\begin{proof}
We proceed by induction on $k$. Note that $\pi_2$ is subadditive, so assume $\pi_{k-1}$ is subadditive for $k-1\geq 2$. By periodicity of $\pi_k$, it suffices to check $\pi_k(x) + \pi_k(y) \geq \pi_k(x+y)$  for all $x,y\in [0,1)$ and $x\leq y$.
\medskip

\noindent{\sc Claim.}
\emph{If $y\in I^k_6 = [b,1)$, then $\pi_k(x+y)\leq \pi_k(x)+\pi_k(y)$.}

\begin{cpf} Since $\pi_k$ is piecewise linear and continuous by Propositions~\ref{prop:nonnegativity} and~\ref{prop:slopes}, we may integrate it over any bounded domain. %Let $\pi'_k$ denote the derivative of $\pi_k$ (where defined). 
A direct calculation shows
\begin{equation*}
\begin{array}{rlr}
\pi_k(x+y) = & \pi_k(x+(y-1)) & \text{by periodicity of }\pi_k\\\\
=& \pi_k(x)+\int_x^{x-(1-y)}\pi_k'(t)dt&\\\\
= &\pi_k(x)+\int_{x-(1-y)}^x-\pi_k'(t)dt&\\\\
\leq& \pi_k(x)+\int_{y}^1-\pi_k'(t)dt&\\\\
= &\pi_k(x)-\pi_k(1)+\pi_k(y)&\\
= &\pi_k(x)+\pi_k(y) &\text{since }\pi_k(1)=0.
\end{array}
\end{equation*}
The inequality follows from Proposition \ref{prop:slopes}, as the minimum value of the slope for $\pi_k$ is $-\frac{1}{1-b}$ and this is the slope over the interval $[b, 1] \supseteq [y,1]$. This concludes the proof of the claim.\end{cpf}

 By the above claim, it suffices to consider the case $y<b$. Since $b \leq \frac12$, this implies that $x\le y\le x+y < 1$.

\vspace{.1 in}
\textbf{Case 1:} $x+y\in I^k_1\cup  I^k_2$. Note that the derivative $\pi'_k$ is nonincreasing on $I^k_1\cup  I^k_2 \setminus \{b(\frac{1}{8})^{k-2}\}$. Thus,
$$\pi_k(x+y) =  \pi_k(x)+\int_x^{x+y} \pi'_k(t)dt \leq  \pi_k(x)+\int_0^{y} \pi'_k(t)dt = \pi_k(x)+\pi_k(y).$$

%By Proposition \ref{prop:properties} (iii), the function $-\pi_k$ is convex over $I^k_1\cup  I^k_2$. Therefore $\pi_k(x) + \pi_k(y) \geq \pi_k(x+y)$.

\textbf{Case 2:}  $x+y\in I^k_3$. Since  $x,y\in I^k_1\cup I^k_2\cup I^k_3$ we have that  $$\pi_k(x)+\pi_k(y) \geq \pi_{k-1}(x)+\pi_{k-1}(y)  \geq \pi_{k-1}(x+y) = \pi_k(x+y)~,$$
where the first inequality comes from Proposition \ref{prop:properties} $(ii)$, the second inequality comes from the induction hypothesis, and the final inequality comes again from Proposition \ref{prop:properties} $(ii)$.

\textbf{Case 3:}  $x+y\in I^k_4 \cup I^k_5$. If $y\in I^k_1\cup I^k_2\cup I^k_3$, then using the induction hypothesis and Proposition \ref{prop:properties} $(ii)$, it follows that $$\pi_k(x)+\pi_k(y) \geq \pi_{k-1}(x)+\pi_{k-1}(y)  \geq \pi_{k-1}(x+y) \geq \pi_k(x+y).$$ If $y\in I^k_4 \cup I^k_5$, then $x\in [0, b-y]$ and $b-y\in I^k_1\cup I^k_2$. Hence, $x \in I^k_1 \cup I^k_2$. Also, $b-(x+y)\in I^k_1 \cup I^k_2$ since $x+y\in I^k_4 \cup I^k_5$. Thus, we can apply Case 1 to the values $x$ and $b-(x+y)$ to obtain $\pi_k(b-y)\leq \pi_k(b-(x+y))+\pi_k(x)$. Using this, we see that
\begin{equation*}
\begin{array}{rlcl}
\pi_k(x+y) = & 1-\pi_k(b-(x+y))&&\text{by the symmetry property}\\
\leq & 1-\pi_k(b-y)+\pi_k(x)&&\\
= & \pi_k(y)+\pi_k(x)&&\text{by the symmetry property}.
\end{array}
\end{equation*}

\textbf{Case 4:}  $x+y\in I^k_6$. $\pi_k$ has a slope of $-\frac{1}{1-b}$ on the interval $[b,x+y]$. Moreover, by Proposition \ref{prop:slopes}, this is the minimum slope that $\pi_k$ admits. Therefore,

\begin{equation*}
\begin{array}{rcl}
\pi_k(x+y) & = & \pi_k(b) + \int_{b}^{x+y}\pi_k'(t)dt\\
 &\leq&1 + \int_{b-x}^{y}\pi_k'(t)dt\\
& = &1 +(\pi_k(y)-\pi_k(b-x))\\
& = &\pi_k(x)+\pi_k(y)~,
\end{array}
\end{equation*}
where the last equality follows by the symmetry of $\pi_k$.\qed
\end{proof}%End subadditivity

\section{$\pi_k$ is a facet}\label{sect-extreme}

By Proposition \ref{prop:slopes}, in order to prove Theorem \ref{thm:kslopes} it suffices to show the following result.

\begin{prop}\label{prop:pi_k_facet}
$\pi_k$ is a facet for each $k\geq 2$.
\end{prop}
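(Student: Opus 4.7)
The plan is to invoke the Facet Theorem (Theorem \ref{thm:facet}), which reduces the claim to showing that any minimal valid function $\pi'$ satisfying $E(\pi_k) \subseteq E(\pi')$ must coincide with $\pi_k$, where $E(\pi_k) := \{(x,y) \in [0,1)^2 : \pi_k(x) + \pi_k(y) = \pi_k(x+y)\}$ is the additivity set of $\pi_k$. I would proceed by induction on $k$. The base case $k=2$ is the classical fact that the Gomory mixed-integer function is a facet, which follows from Theorem~\ref{thm:2-slope} since $\pi_2$ is continuous, piecewise linear with two slopes, and minimal.

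For the induction step, fix $k \geq 3$, assume $\pi_{k-1}$ is a facet, and let $\pi'$ be a minimal valid function with $E(\pi_k) \subseteq E(\pi')$. The goal is to deduce $\pi' \equiv \pi_k$ on $[0,1)$, which I would accomplish in three stages. First, since $\pi_k = \pi_{k-1}$ on $I^k_3 \cup I^k_6$ by Proposition~\ref{prop:properties}(ii), the tight pairs of $\pi_{k-1}$ lying entirely in this common region also lie in $E(\pi_k) \subseteq E(\pi')$. Combined with the inductive hypothesis that $\pi_{k-1}$ is a facet, this should force $\pi' = \pi_k$ on the ``old'' portion $I^k_3 \cup I^k_6$ of the fundamental domain. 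Second, since $\pi'$ is minimal, it satisfies the symmetry condition $\pi'(x) + \pi'(b-x) = 1$; this transfers any equality $\pi'(y) = \pi_k(y)$ on $I^k_1 \cup I^k_2$ to the reflected interval $I^k_4 \cup I^k_5$, so it suffices to handle one side of the new perturbation.

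The third and most delicate stage is to pin down $\pi'$ on $I^k_1 \cup I^k_2$, where $\pi_k$ introduces the new slope $\tfrac{2^{k-2}-b}{b-b^2}$. The idea is to exhibit 2-dimensional rectangles of tight additivity inside $E(\pi_k)$ and invoke the classical interval lemma: if $\pi'$ is subadditive and $\pi'(x)+\pi'(y)=\pi'(x+y)$ for every $(x,y)$ in a product of two intervals of positive length, then $\pi'$ is affine with a common slope on those intervals and on their sum. Natural candidates are pairs $(x,y)$ with $x \in I^k_1$ and $y$ ranging over a subinterval of $I^k_6$ (where $\pi_k$ has the Gomory slope $-\tfrac{1}{1-b}$, matching the slope on $I^k_2$), which would force $\pi'$ to be affine on $I^k_1 \cup I^k_2$ with the correct slopes. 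Continuity of $\pi'$ at the junction $2b(1/8)^{k-2}$ with the already-determined values on $I^k_3$, together with $\pi'(0)=0$, then fixes the constants uniquely.

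The main obstacle will be this third stage: the perturbation intervals $I^k_1, I^k_2$ have length $b(1/8)^{k-2}$, which shrinks geometrically in $k$, so the 2D tight regions must be constructed very carefully, and one must verify by direct computation (using Propositions~\ref{prop:properties} and~\ref{prop:slopes}) that the selected rectangles genuinely lie in $E(\pi_k)$. A secondary subtlety in the first stage is that $E(\pi_{k-1})$ is not literally contained in $E(\pi_k)$ — only those tight pairs of $\pi_{k-1}$ whose two arguments and their sum all avoid the modified intervals $I^k_1 \cup I^k_2 \cup I^k_4 \cup I^k_5$ carry over — so the application of the inductive hypothesis must be mediated by the interval lemma rather than being a direct set-theoretic inclusion. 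Executing this combinatorial/geometric bookkeeping is where the bulk of the remaining work in Section~\ref{sect-extreme} will reside.
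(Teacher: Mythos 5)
Your choice of tools is right — the Facet Theorem, the Interval Lemma, the exploitation of symmetry, and the search for full-dimensional rectangles inside $E(\pi_k)$ — but the organizing induction is not, and your Stage~1 has a genuine gap. You propose to induct on $k$ and, in the induction step, to \emph{use the facetness of $\pi_{k-1}$} to pin down $\pi'$ on $I^k_3\cup I^k_6$. Facetness of $\pi_{k-1}$ is the statement that any minimal $\theta$ with $E(\pi_{k-1})\subseteq E(\theta)$ equals $\pi_{k-1}$. But your $\pi'$ only satisfies $E(\pi_k)\subseteq E(\pi')$, and — as you yourself note — $E(\pi_{k-1})\not\subseteq E(\pi_k)$: many additivity relations of $\pi_{k-1}$ involve points in $I^k_1\cup I^k_2\cup I^k_4\cup I^k_5$ and are destroyed by the perturbation. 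So the hypothesis of the inductive statement is simply not available, and there is no obvious ``restriction'' or ``patching'' of $\pi'$ to a function $\theta'$ with $E(\pi_{k-1})\subseteq E(\theta')$, since subadditivity and additivity relations do not survive such surgery. Saying the application ``must be mediated by the interval lemma'' acknowledges the problem without solving it: once you replace the appeal to facetness of $\pi_{k-1}$ by a from-scratch interval-lemma argument, you are no longer using the induction hypothesis at all, and you are rediscovering the paper's actual proof.

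What the paper really does is avoid inducting on $k$ (for $k\ge 3$) entirely. It proves a peeling lemma (Lemma~\ref{lem:main-lem}) by induction on a scale parameter $j\in\{3,\dots,k\}$ \emph{for a fixed $k$}: start with the widest interval $I^3_3$ (pinned down via Propositions~\ref{claim:Ik6_general} and~\ref{claim:i33_general}), then show at each level $j\to j+1$ that the thin annuli $I^j_2\cup I^j_4$ and $(I^j_1\setminus\intr(I^{j+1}_1\cup I^{j+1}_2))\cup(I^j_5\setminus\intr(I^{j+1}_4\cup I^{j+1}_5))$ are also forced, using explicit rectangles $U\times V\subseteq E(\pi_k)$ constructed from the pieces of $\pi_k$ that are affine with matching slopes. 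This uses only the structure of $\pi_k$ itself, never the facetness of any smaller $\pi_j$.

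There is also a smaller error in Stage~3. The rectangle you want against a subinterval of $I^k_6$ (Gomory slope $-\tfrac{1}{1-b}$) can only yield affineness of $\pi'$ on $I^k_2$, which shares that slope — exactly what Proposition~\ref{claim:ik2_induction_general} does. It cannot give you $I^k_1$, because $\pi_k$ has the new slope $\tfrac{2^{k-2}-b}{b-b^2}\neq -\tfrac{1}{1-b}$ there, so $I^k_1\times(\text{subinterval of }I^k_6)$ is not contained in $E(\pi_k)$. The interval $I^k_1$ needs a separate argument: take $U=[0,\tfrac{b}{2}(1/8)^{k-2}]$ so that $U+U=I^k_1$, verify $U\times U\subseteq E(\pi_k)$ from the linearity of $\pi_k$ on $I^k_1$, apply the Interval Lemma to get $\pi'$ affine on $I^k_1$, and then match endpoints using $\pi'(0)=0$ and the already-determined value at $b(1/8)^{k-2}$.
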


We dedicate the remainder of the section to proving Proposition~\ref{prop:pi_k_facet}. To this end, given a function $\theta: \R^n \to \R$, define
\begin{equation}\label{eq:equality}E(\theta) = \left\{(x, y)\in \R^n \times \R^n: \theta(x)+\theta(y) = \theta(x+y)\right\}.\end{equation}
Our proof of Proposition \ref{prop:pi_k_facet} is based on
the \textit{Facet Theorem}, which gives  a sufficient condition for a function to be a facet~\cite{basu-hildebrand-koeppe-molinaro:k+1-slope,infinite}, and the \textit{Interval Lemma}, which first appeared in~\cite{tspace}, and was subsequently elaborated upon in~\cite{dey1,dey2,dey3,bhk-IPCOext}; see also the survey~\cite{basu2016light,basu2016light2}.

\begin{theorem}[Facet Theorem]\label{thm:facet}
Let $\pi:\R^n\to \R_+$ be a minimal valid function for $I_b$ for some $b \in \R^n\setminus \Z^n$. Suppose that for every minimal function $\theta:\R^n\to \R_+$ satisfying $E(\pi)\subseteq E(\theta)$, it follows that $\theta= \pi$. Then $\pi$ is a facet.
\end{theorem}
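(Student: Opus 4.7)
The plan is to establish the Facet Theorem by a direct three-step argument. Let $\pi'$ be any valid function with $P(\pi)\subseteq P(\pi')$; the goal is $\pi' = \pi$. First I would pick a minimal valid function $\theta$ dominated by $\pi'$. Such a $\theta$ exists by Zorn's lemma applied to the family of valid functions bounded above by $\pi'$, ordered by pointwise comparison: descending chains have valid pointwise infima, which one checks by approximating the infimum within a chain on any finite support of a test function $y \in R_b(\R^n,\Z^n)$. Since $\theta\leq \pi'$ and both are valid, any $y\in P(\pi')$ satisfies $1=\sum_r \pi'(r)y(r)\geq \sum_r\theta(r)y(r)\geq 1$, so $P(\pi')\subseteq P(\theta)$, and combining with the hypothesis gives $P(\pi)\subseteq P(\theta)$.

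The crux is to upgrade this inclusion of polyhedra to the inclusion of equality sets $E(\pi)\subseteq E(\theta)$. Given $(r_1,r_2)\in E(\pi)$, I would test $\theta$ against the function $y:\R^n\to\Z_+$ of finite support that assigns unit mass to each of $r_1$, $r_2$, and $b-r_1-r_2$ (summing multiplicities in degenerate cases when these atoms coincide). Then $\sum_r r\,y(r)=b$, so $y\in R_b(\R^n,\Z^n)$, and by the defining equation of $E(\pi)$ together with the symmetry condition for $\pi$ from Theorem~\ref{thm:minimalinteger},
\[
\sum_r \pi(r)y(r)=\pi(r_1)+\pi(r_2)+\pi(b-r_1-r_2)=\pi(r_1+r_2)+\pi(b-(r_1+r_2))=1.
\]
Thus $y\in P(\pi)\subseteq P(\theta)$, so $\theta(r_1)+\theta(r_2)+\theta(b-r_1-r_2)=1$; the symmetry condition for the minimal $\theta$ also yields $\theta(r_1+r_2)+\theta(b-r_1-r_2)=1$, and subtracting the two identities produces $\theta(r_1)+\theta(r_2)=\theta(r_1+r_2)$. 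Hence $(r_1,r_2)\in E(\theta)$, and the hypothesis of the theorem forces $\theta=\pi$; in particular $\pi\leq \pi'$.

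Finally, to promote $\pi\leq \pi'$ to $\pi=\pi'$, for each $x\in\R^n$ I would apply the same testing idea with $y$ putting unit mass at $x$ and at $b-x$ (combining multiplicities when $2x\equiv b\pmod{\Z^n}$). Then $y\in R_b(\R^n,\Z^n)$ and symmetry of $\pi$ gives $\sum_r\pi(r)y(r)=1$, so $y\in P(\pi)\subseteq P(\pi')$, which forces $\pi'(x)+\pi'(b-x)=1=\pi(x)+\pi(b-x)$; combined with the pointwise inequality $\pi\leq\pi'$ at both atoms, this yields $\pi(x)=\pi'(x)$ for every $x$. I expect the only genuinely delicate step to be the preliminary Zorn-lemma reduction producing a minimal dominator of $\pi'$; this existence statement is folklore for the infinite group relaxation but takes some care to set up, whereas the remainder of the argument is elementary bookkeeping with the two- and three-atom test functions described above.
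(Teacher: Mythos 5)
The paper does not prove Theorem~\ref{thm:facet}; it cites it from~\cite{basu-hildebrand-koeppe-molinaro:k+1-slope,infinite}. So there is no in-paper proof to compare against, only an assessment of your argument on its own merits.

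Your argument is correct: the Zorn-produced minimal $\theta\leq\pi'$ satisfies $P(\pi)\subseteq P(\pi')\subseteq P(\theta)$; the three-atom test function combined with symmetry of $\pi$ and $\theta$ gives $E(\pi)\subseteq E(\theta)$; the hypothesis forces $\theta=\pi\leq\pi'$; and the two-atom test pins down $\pi'=\pi$. Each step checks out, including the chain-infimum argument needed for Zorn, since validity is tested against functions $y$ of finite support.

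That said, the step you flag as delicate --- the Zorn reduction --- can be skipped entirely. Using exactly the test functions you already introduced, one can show that $\pi'$ \emph{itself} is minimal and then apply the hypothesis directly to $\pi'$. The two-atom test (mass at $x$ and $b-x$) lands in $P(\pi)$ by symmetry of the minimal $\pi$, hence in $P(\pi')$, giving $\pi'(x)+\pi'(b-x)=1$ for all $x$; this is the symmetry condition for $\pi'$. Mass at $b$ alone shows $\pi'(b)=1$, and mass at $z$ and $b$ for $z\in\Z^n$ then gives $\pi'(z)=0$. For subadditivity, the three-atom test at $x_1,x_2,b-x_1-x_2$ lies in $R_b(\R^n,\Z^n)$, so validity of $\pi'$ gives $\pi'(x_1)+\pi'(x_2)+\pi'(b-x_1-x_2)\geq 1$; substituting $\pi'(b-x_1-x_2)=1-\pi'(x_1+x_2)$ from symmetry yields $\pi'(x_1)+\pi'(x_2)\geq\pi'(x_1+x_2)$. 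Now Theorem~\ref{thm:minimalinteger} certifies that $\pi'$ is minimal. Your three-atom computation (applied to $\pi'$ in place of $\theta$) then gives $E(\pi)\subseteq E(\pi')$, and the hypothesis yields $\pi'=\pi$ outright. This route avoids the axiom of choice and also makes your final $\pi\leq\pi'\Rightarrow\pi=\pi'$ step unnecessary, shortening the argument; the price is that it leans explicitly on the characterization in Theorem~\ref{thm:minimalinteger}, which you would otherwise only use implicitly through the assumed minimality of $\theta$.
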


\begin{lemma}[Interval Lemma]\label{lem:interval-lemma} Let $U, V$ be nondegenerate closed intervals in $\R$. If $\theta: \R \to \R$ is bounded over $U$ and $V$, and $U\times V\subseteq E(\theta)$, then $\theta$ is affine over $U, V$ and $U+V$ with the same slope.
\end{lemma}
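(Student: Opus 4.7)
The plan is to reduce the lemma to Cauchy's functional equation on a small neighborhood of $0$. First, I would fix base points $x_0 \in \intr U$ and $y_0 \in \intr V$ (which exist by nondegeneracy) and choose $\delta > 0$ small enough that $[x_0 - \delta, x_0 + \delta] \subseteq U$ and $[y_0 - \delta, y_0 + \delta] \subseteq V$. For any $t \in [-\delta, \delta]$, applying the hypothesis $U \times V \subseteq E(\theta)$ to the pairs $(x_0 + t,\, y_0)$ and $(x_0,\, y_0 + t)$ yields two expressions both equal to $\theta(x_0 + y_0 + t)$; subtracting gives
\[
\theta(x_0 + t) - \theta(x_0) \;=\; \theta(y_0 + t) - \theta(y_0).
\]
I would call this common value $\phi(t)$. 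Boundedness of $\theta$ over $U$ forces $\phi$ to be bounded on $[-\delta, \delta]$.

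Next I would verify that $\phi$ satisfies Cauchy's equation $\phi(s) + \phi(t) = \phi(s+t)$ for all $s, t$ with $s, t, s+t \in [-\delta, \delta]$. Applying the equality in $E(\theta)$ to the pairs $(x_0 + s,\; y_0 + t)$ and $(x_0 + s + t,\; y_0)$ gives the same value $\theta(x_0 + y_0 + s + t)$; equating the two and canceling via the base identity $\theta(x_0) + \theta(y_0) = \theta(x_0 + y_0)$ produces additivity of $\phi$ on the neighborhood. The classical regularity theorem for Cauchy's equation (a bounded additive function on a neighborhood of $0$ is linear) then gives $\phi(t) = ct$ for some constant $c \in \R$.

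To globalize, I would observe that $\phi$ depends only on $y_0$ and $\theta$, not on the specific choice of $x_0$: for any other $x_0' \in \intr U$ the same derivation gives $\theta(x_0' + t) - \theta(x_0') = \phi(t) = ct$ on a (possibly smaller) neighborhood of $0$. Therefore $x \mapsto \theta(x) - cx$ is locally constant on the connected set $\intr U$, so equals a single constant $d_U$; the symmetric argument yields $\theta(y) = cy + d_V$ on $\intr V$ with the same slope $c$. For $z \in \intr(U+V)$, pick a decomposition $z = x + y$ with $x \in \intr U$, $y \in \intr V$; then $\theta(z) = \theta(x) + \theta(y) = cz + (d_U + d_V)$, so $\theta$ is affine with slope $c$ on $\intr(U+V)$. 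To extend to the endpoints of $U$, for an endpoint $x^* \in U$ I would pick any $y \in \intr V$ so that $x^* + y \in \intr(U + V)$; then $\theta(x^*) = \theta(x^* + y) - \theta(y)$ can be evaluated from the affine formulas already established, giving $\theta(x^*) = cx^* + d_U$. Endpoints of $V$ are handled symmetrically, and the two corner points $z = \min U + \min V$ and $z = \max U + \max V$ of $U + V$ are handled directly by a single application of the functional equation.

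The main obstacle is the classical regularity step invoked in the second paragraph: a bounded solution of Cauchy's equation on a neighborhood of $0$ must be linear. This is precisely where the boundedness hypothesis on $\theta$ is used, since without it pathological nonlinear additive solutions (obtained via a Hamel basis of $\R$ over $\Q$) would defeat the conclusion. Everything else is bookkeeping: verifying additivity via two applications of $E(\theta)$, and propagating local affinity from $\intr U$ and $\intr V$ to the closed intervals and to $U+V$ by one final use of the functional equation.
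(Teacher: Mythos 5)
Your proof is correct. Note that the paper itself does not prove this lemma; it is quoted as a known result, citing Gomory--Johnson's T-space paper and the later elaborations of Dey et al.\ and Basu--Hildebrand--K\"oppe, and the argument you give is essentially the standard one from that literature: use $U\times V\subseteq E(\theta)$ twice to produce a single difference function $\phi(t)=\theta(x_0+t)-\theta(x_0)=\theta(y_0+t)-\theta(y_0)$, show it is additive and bounded near $0$, conclude linearity, and then propagate the affine behaviour over $U$, $V$ and $U+V$ using connectedness and a final application of the functional equation at the endpoints. The only point worth flagging is your appeal to ``the classical regularity theorem'': the textbook statement assumes additivity on all of $\R$, whereas your $\phi$ satisfies Cauchy's equation only for $s,t,s+t\in[-\delta,\delta]$. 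This is not a gap, but it deserves a line: either extend $\phi$ to an additive function on $\R$ (the standard Dar\'oczy--Losonczi extension), or argue directly — from restricted additivity one gets $\phi(qt)=q\phi(t)$ for rational $q\in[0,1]$, and if $\phi(t_0)\neq ct_0$ with $c=\phi(\delta)/\delta$, then $\psi=\phi-c\,\mathrm{id}$ vanishes on rational multiples of $\delta$ yet takes a fixed nonzero value at points arbitrarily close to $0$, and summing $N$ copies of such a point violates boundedness. With that sentence added, your globalization steps (local constancy of $\theta(x)-cx$ on the connected set $\intr U$, the decomposition $\intr(U+V)=\intr U+\intr V$, and the endpoint bookkeeping) are all sound.
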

We will often use the above lemma when $\theta$ is a minimal valid function. In this case $\theta$ is bounded, as $0\le \theta \le 1$. We also say a function $\theta:\R \to \R$ is {\em locally bounded} if it is bounded on every compact interval.

\begin{prop}\label{obs:periodicity}
Let $\theta : \R \to \R_+$ be such that $\theta(0) = 0$ and $\theta(x+z) = \theta(x) + \theta(z)$ for all $x \in \R$ and $z \in \Z$. Then $\theta$ is periodic, i.e., $\theta(x+z) = \theta(x)$ for all $x \in \R$ and $z \in \Z$.
\end{prop}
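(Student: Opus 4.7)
The plan is to show that $\theta$ vanishes on all of $\Z$, after which periodicity follows immediately from the hypothesis $\theta(x+z) = \theta(x) + \theta(z)$.

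First I would fix an arbitrary integer $z \in \Z$ and apply the hypothesis with the real variable $x$ specialized to $-z$ (which is itself an integer, but the hypothesis only requires $x \in \R$, so this is legitimate). This gives
\[
\theta(0) \;=\; \theta(-z + z) \;=\; \theta(-z) + \theta(z).
\]
Since $\theta(0) = 0$ by assumption, this rearranges to $\theta(-z) + \theta(z) = 0$.

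Next I would invoke the nonnegativity assumption $\theta : \R \to \R_+$: a sum of two nonnegative reals equals zero only if both summands are zero. Therefore $\theta(z) = 0$ (and also $\theta(-z) = 0$) for every $z \in \Z$.

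Finally, substituting $\theta(z) = 0$ back into the hypothesis yields $\theta(x + z) = \theta(x) + 0 = \theta(x)$ for every $x \in \R$ and $z \in \Z$, which is precisely periodicity. There is no real obstacle here; the only subtle point is noticing that nonnegativity is exactly what converts the Cauchy-type identity on the integers into the vanishing statement $\theta|_{\Z} \equiv 0$.
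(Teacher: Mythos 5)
Your proof is correct and follows exactly the same route as the paper's: specialize $x=-z$ to get $0=\theta(-z)+\theta(z)$, invoke nonnegativity to conclude $\theta$ vanishes on $\Z$, and substitute back. The only difference is that you spell out the final substitution step, which the paper leaves implicit.
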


\begin{proof}
It suffices to show that $\theta(z) = 0$ for all $z\in \Z$. This is true since $0 = \theta(0) = \theta(-z) + \theta(z)$ for all $z\in \Z$ and $\theta$ is nonnegative.\qed
\end{proof}

In the following Propositions~\ref{claim:Ik6_general}, \ref{claim:i33_general}, \ref{claim:ik2_induction_general}, \ref{claim:Ij1_induction_general}, we develop some tools towards proving facetness.

%The following claims hold for any $\pi_k$, $k\geq 3$.

 \begin{prop}\label{claim:Ik6_general} Let $k \geq 3$ and let $\pi$ be a minimal valid function such that $\pi = \pi_k$ on $I^k_6$. Then for all locally bounded functions $\theta:\R \to \R_+$ such that $E(\pi) \subseteq E(\theta)$ satisfying $\theta(0)=0, \theta(b) = 1$, we must have $\theta= \pi = \pi_k$ on $I^k_6\cup \{1\}$.\end{prop}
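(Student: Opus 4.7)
The plan is to transfer the additive structure of $\pi_k$ on $I^k_6=[b,1)$ over to $\theta$ via $E(\pi)\subseteq E(\theta)$, and then use the Interval Lemma together with the symmetry of $\theta$ to force $\theta$ to be affine on $[b,1]$ with the same slope and endpoint values as $\pi_k$. First I derive the basic properties of $\theta$ that will be needed. Since $\pi$ is minimal, $\pi(z)=0$ for every $z\in\Z$ and $\pi$ is periodic, so $(x,z)\in E(\pi)$ for every $x\in\R$ and $z\in\Z$; combined with $\theta(0)=0$, Proposition~\ref{obs:periodicity} yields that $\theta$ is periodic, and in particular $\theta(1)=\theta(0)=0$. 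The symmetry condition on $\pi$ gives $\pi(x)+\pi(b-x)=1=\pi(x+(b-x))$, so $(x,b-x)\in E(\pi)\subseteq E(\theta)$ for every $x\in\R$, and hence $\theta(x)+\theta(b-x)=\theta(b)=1$.

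Next I identify the additivity region of $\pi_k$ on $[b,1]^2$ and apply the Interval Lemma to $\theta$. A direct computation using $\pi_k(x)=\frac{1-x}{1-b}$ on $[b,1)$, $\pi_k(1)=0$, and periodicity of $\pi_k$ shows that whenever $(x,y)\in[b,1]^2$ satisfies $x+y\in[1+b,2]$, one has $\pi_k(x)+\pi_k(y)=\frac{2-x-y}{1-b}=\pi_k(x+y)$. Since $\pi=\pi_k$ on $I^k_6$ and $\pi$ is periodic, any such pair lies in $E(\pi)\subseteq E(\theta)$. For each $\delta\in(0,\tfrac{1-b}{2})$, the nondegenerate closed rectangle $U\times V:=[b+\delta,1-\delta]\times[1-\delta,1]$ sits inside this additivity region, and local boundedness of $\theta$ lets me apply the Interval Lemma to obtain $\theta$ affine on $U$ and on $V$ with one and the same slope; since $U$ and $V$ share the endpoint $1-\delta$, the two affine pieces match up and $\theta$ is in fact affine on $U\cup V=[b+\delta,1]$. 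The resulting affine functions for different values of $\delta$ agree on their overlaps, so letting $\delta\to 0$ I conclude that $\theta$ is affine on $(b,1]$ with some slope $m$; combined with $\theta(1)=0$ this gives $\theta(x)=m(x-1)$ for all $x\in(b,1]$.

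Finally I pin down $m$ using symmetry. For $x\in(b,1)$, periodicity of $\theta$ lets me rewrite the symmetry relation as $\theta(x)+\theta(1+b-x)=1$ with $1+b-x\in(b,1)$. Substituting the formula $\theta(t)=m(t-1)$ into both terms gives $m(x-1)+m(b-x)=-m(1-b)=1$, so $m=-\frac{1}{1-b}$ and $\theta(x)=\frac{1-x}{1-b}=\pi_k(x)$ on $(b,1]$. Combined with the hypothesis $\theta(b)=1=\pi_k(b)$, this yields $\theta=\pi=\pi_k$ on $I^k_6\cup\{1\}$, as required. The main subtlety is that the Interval Lemma cannot be applied with a rectangle whose first factor touches $b$ (the additivity region degenerates to a single point there), so the slope cannot be read off directly from the two endpoint values $\theta(b)=1$ and $\theta(1)=0$; it is the symmetry relation, derived without any continuity assumption at the boundary, that ties the two endpoints together and forces the correct value of $m$.
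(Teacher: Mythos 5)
Your proof is correct, but takes a more roundabout route than the paper's, and the ``subtlety'' you flag at the end is not actually there. The paper picks the single symmetric rectangle $U=V=[\tfrac{1+b}{2},1]$, which lies inside the additivity region exactly as you describe, and then exploits the \emph{third} part of the Interval Lemma's conclusion: $\theta$ is affine not only on $U$ and $V$ but also on $U+V=[1+b,2]$, which by periodicity is precisely $I^k_6\cup\{1\}=[b,1]$ modulo $1$, closed endpoint and all. So affinity is obtained on the full closed interval $[b,1]$ in one shot, and the two endpoint values $\theta(b)=1$ (hypothesis) and $\theta(1)=0$ (from periodicity) already pin $\theta$ down; symmetry is never needed. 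Your belief that ``the Interval Lemma cannot be applied with a rectangle whose first factor touches $b$'' is true but irrelevant --- $U$ and $V$ can stay uniformly away from $b$, while $U+V$ still reaches the translate $1+b$ of $b$. By using only the $U$- and $V$-conclusions of the Interval Lemma and discarding the $U+V$-conclusion, you created the need for both the $\delta\to 0$ limit argument (to cover $(b,1]$) and the separate symmetry computation (to recover the slope), neither of which is necessary. Your argument is valid, but you should note that the extra machinery --- the one-parameter family of rectangles, the limiting step, and the derivation and use of $\theta(x)+\theta(b-x)=1$ --- can all be deleted once the $U+V$ part of the Interval Lemma is invoked.
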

\begin{proof}
Note that $I^k_6\cup \{1\} \equiv [\frac{1+b}{2},1]+[\frac{1+b}{2},1]$ (modulo 1). Since $\pi$ is minimal, Theorem~\ref{thm:minimalinteger} implies $\pi$ is periodic. Since $E(\pi) \subseteq E(\theta)$, Proposition~\ref{obs:periodicity} shows that $\theta$ is periodic. In particular, $\theta(1)=0 =\pi(1)$ and $\theta(b) = 1 =\pi(b)$. Hence $\pi= \theta$ on the endpoints of $I^k_6\cup\{1\}$. Moreover, $x,y\in  [\frac{1+b}{2},1]$ implies that
\begin{equation*}
\begin{array}{rlr}
\pi(x)+\pi(y) = &\left(\frac{1}{1-b}-\left(\frac{1}{1-b}\right)x\right)+\left(\frac{1}{1-b}-\left(\frac{1}{1-b}\right)y\right)&\text{since }\pi = \pi_k\text{ on }I^k_6\\
= &\frac{1}{1-b}-\left(\frac{1}{1-b}\right)\left(x+y-1\right) &\\
= & \pi(x+y-1) & \\
= & \pi(x+y)&\text{by periodicity.}
\end{array}
\end{equation*}
Hence $[\frac{1+b}{2},1]\times [\frac{1+b}{2},1]\subseteq E(\pi) \subseteq E(\theta)$. Lemma~\ref{lem:interval-lemma} then implies that $\theta$ is affine over $I^k_6 \cup \{1\}$. Since $\pi$ is also affine over $I^k_6 \cup \{1\}$ and $\pi= \theta$ on the endpoints of $I^k_6\cup\{1\}$, we must have $\pi = \theta$ on $I^k_6\cup\{1\}$.\qed
\end{proof}

\begin{prop}\label{claim:i33_general} Let $k \geq 3$ and let $\pi$ be a minimal valid function such that $\pi = \pi_k$ on $I^3_3$. Then for all locally bounded functions $\theta:\R \to \R_+$ such that $E(\pi) \subseteq E(\theta)$ satisfying $\theta(\frac{b}{2})=\frac12$, we must have $\theta= \pi = \pi_k$ on $I^3_3$.\end{prop}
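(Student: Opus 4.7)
On $I^3_3=[b/4,3b/4]$ the recursive construction gives $\pi_k=\pi_3=\pi_2$ (since $I^3_3\subseteq I^j_3$ for each $j\in\{3,\ldots,k\}$ and $\pi_j=\pi_{j-1}$ on $I^j_3$), so $\pi_k(x)=x/b$ on $I^3_3$. The goal is to force $\theta$ to have this same affine form. The crucial geometric observation is that the triangle
\[
T:=\{(x,y)\st x\ge b/4,\ y\ge b/4,\ x+y\le 3b/4\}
\]
satisfies $T\subseteq E(\pi)\subseteq E(\theta)$: for every $(x,y)\in T$, the three points $x$, $y$, $x+y$ all lie in $I^3_3$, so $\pi(x)+\pi(y)=x/b+y/b=(x+y)/b=\pi(x+y)$.

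From $T\subseteq E(\theta)$ I would first extract a few point values. The pair $(0,0)\in E(\pi)$ (minimality of $\pi$ gives $\pi(0)=0$) yields $\theta(0)=0$. The pair $(b/4,b/4)\in E(\pi)$, since $\pi(b/4)+\pi(b/4)=1/2=\pi(b/2)$, so $2\theta(b/4)=\theta(b/2)=1/2$ and thus $\theta(b/4)=1/4$. Similarly $(b/4,b/2)\in E(\pi)$, since $\pi(b/4)+\pi(b/2)=3/4=\pi(3b/4)$, giving $\theta(3b/4)=\theta(b/4)+\theta(b/2)=3/4$.

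Next, for each $\epsilon\in(0,b/4)$ the rectangle
\[
R_\epsilon:=[b/4,\,b/2-\epsilon]\times[b/4,\,b/4+\epsilon]
\]
sits inside $T$, since the largest value of $x+y$ on $R_\epsilon$ equals exactly $3b/4$. Applying the Interval Lemma (Lemma~\ref{lem:interval-lemma}) to $R_\epsilon$, using that $\theta$ is locally bounded, produces a slope $m_\epsilon$ such that $\theta$ is affine with slope $m_\epsilon$ on $[b/4,\,b/2-\epsilon]$ and on $[b/2,\,3b/4]$. The values $\theta(b/2)=1/2$ and $\theta(3b/4)=3/4$ force $m_\epsilon\cdot(b/4)=1/4$, i.e.\ $m_\epsilon=1/b$, and then $\theta(b/4)=1/4$ upgrades this to $\theta(x)=x/b$ on $[b/4,\,b/2-\epsilon]$. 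Letting $\epsilon\downarrow 0$ gives $\theta(x)=x/b$ on $[b/4,b/2)$; combining with $\theta(b/2)=1/2=(b/2)/b$ and with $\theta(x)=x/b$ on $[b/2,3b/4]$ yields $\theta=\pi_k$ on all of $I^3_3$. The main obstacle is the constraint $x+y\le 3b/4$ defining $T$: no single rectangle inside $T$ has one coordinate spanning all of $[b/4,b/2]$, which is precisely why the $\epsilon$-shrinkage argument (combined with slope-matching forced on $[b/2,3b/4]$) is needed to fill in the middle of the interval.
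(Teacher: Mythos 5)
Your proof is correct, but it takes a slightly different route from the paper's. The paper uses the single square $U \times U$ with $U = [\tfrac{b}{4},\tfrac{3b}{8}]$, applies the Interval Lemma once to get $\theta$ affine on $U+U=[\tfrac{b}{2},\tfrac{3b}{4}]$, pins down $\theta(\tfrac{b}{4})$ and $\theta(\tfrac{3b}{4})$ (the latter via the symmetry condition inherited from $\pi$ through $E(\pi)\subseteq E(\theta)$), and then transfers the conclusion to the left half $[\tfrac{b}{4},\tfrac{b}{2}]$ by again invoking symmetry. You avoid any appeal to symmetry: you obtain $\theta(\tfrac{3b}{4})=\tfrac34$ directly from the pair $(\tfrac{b}{4},\tfrac{b}{2})\in E(\pi)$, and you cover the left half via a family of shrinking rectangles $R_\epsilon\subseteq T$, each of which gives, by the Interval Lemma, a common slope on $[\tfrac{b}{4},\tfrac{b}{2}-\epsilon]$ and $[\tfrac{b}{2},\tfrac{3b}{4}]$ that the known endpoint values force to equal $\tfrac1b$. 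What the paper's symmetry argument buys is brevity (one Interval Lemma application instead of a family); what your argument buys is that it is more self-contained in that it never uses that $\pi$ satisfies the symmetry condition, only that $\pi$ equals $\tfrac{x}{b}$ on $I^3_3$. Both are valid; your proof is correct.
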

\begin{proof}
Let $U = \left[\frac{b}{4}, \frac{3b}{8}\right]\subseteq I_3^3$ and note that $U+U = \left[\frac{b}{2}, \frac{3b}{4}\right]\subseteq I_3^3$. For $x,y\in U$, since $\pi = \pi_k$ on $I_3^3$ we see that
$$\pi(x)+\pi(y) = \frac{1}{b}x+\frac{1}{b}y = \frac{1}{b}\left(x+y\right) = \pi(x+y).$$ Hence $U\times U\subseteq E(\pi)\subseteq E(\theta).$ Using Lemma~\ref{lem:interval-lemma}, $\theta$ is affine over $[\frac{b}{2}, \frac{3b}{4}]$. By assumption, $\theta(\frac{b}{2}) = \pi(\frac{b}{2}) = \frac{1}{2}$. Using this and $(\frac b4, \frac b4)\in E(\pi)\subseteq E(\theta)$, it follows that $\theta(\frac{b}{4}) = \pi(\frac{b}{4}) = \frac14$. Since $\pi$ satisfies the symmetry condition and $E(\pi)\subseteq E(\theta)$, $\theta$ also satisfies the symmetry condition. This implies $\theta(\frac{3b}{4}) = \pi(\frac{3b}{4})=\frac34$. Therefore, by the affine structure of $\theta$ and $\pi$ over $[\frac{b}{2}, \frac{3b}{4}]$, it follows that $\theta=\pi$ on $[\frac{b}{2}, \frac{3b}{4}]$. The symmetric property of $\theta$ and $\pi$ then yields $\theta = \pi$ on $[\frac{b}{4}, \frac{b}{2}]$ and thus on $I^3_3$. \qed\end{proof}

\begin{prop}\label{claim:ik2_induction_general} Let $k \geq 3$ and $j\in \{3, \dots, k\}$. Let $\pi$ be a minimal valid function such that $\pi = \pi_k$ on $I^j_2\cup I^j_4\cup I^j_6$. Then for all locally bounded functions $\theta:\R \to \R_+$ such that $E(\pi) \subseteq E(\theta)$ and $\theta = \pi$ on $I_3^j\cup I_6^j \cup \{1\}$, we must have $\theta= \pi = \pi_k$ on $I^j_2\cup I^j_4$.

%Let $\theta, \pi$ be minimal valid functions so that $E(\pi)\subseteq E(\theta)$. Let $k \geq 3$ and $j\in \{3, \dots, k\}$. Suppose $\theta = \pi$ on $I_3^j\cup I_6^j$ and $\pi = \pi_k$ on $I^j_2\cup I^j_4\cup I_6^j$. Then $\theta = \pi$ on $I^j_2\cup I^j_4$.
\end{prop}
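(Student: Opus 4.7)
The plan is to first prove $\theta = \pi$ on $I^j_4$ via the Interval Lemma applied to a one-parameter family of product regions $U \times V \subseteq I^j_2 \times I^j_4$, and then deduce $\theta = \pi$ on $I^j_2$ from the symmetry condition; combined with the hypothesis $\pi = \pi_k$ on $I^j_2 \cup I^j_4$, this gives the full conclusion $\theta = \pi = \pi_k$ on $I^j_2 \cup I^j_4$. For each parameter $a \in (b(1/8)^{j-2},\, 2b(1/8)^{j-2})$, let $U_a := [a,\, 2b(1/8)^{j-2}] \subseteq I^j_2$ and $V_a := [b-a,\, b-b(1/8)^{j-2}] \subseteq I^j_4$. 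Then $\min U_a + \min V_a = b$ and $\max U_a + \max V_a = b + b(1/8)^{j-2} < 1$, so $U_a + V_a \subseteq [b,\, b+b(1/8)^{j-2}] \subseteq I^j_6$. Using the explicit linear formulas $\pi(x) = \frac{4^{2-j}-x}{1-b}$ on $I^j_2$, $\pi(y) = \frac{1-4^{2-j}-y}{1-b}$ on $I^j_4$, and $\pi(z) = \frac{1-z}{1-b}$ on $I^j_6$ (available by the hypothesis $\pi = \pi_k$ on $I^j_2 \cup I^j_4 \cup I^j_6$), the $4^{2-j}$ terms cancel cleanly to give $\pi(x) + \pi(y) = \frac{1-(x+y)}{1-b} = \pi(x+y)$ for $(x,y) \in U_a \times V_a$, so $U_a \times V_a \subseteq E(\pi) \subseteq E(\theta)$.

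Next I would apply the Interval Lemma (Lemma~\ref{lem:interval-lemma}) to conclude that $\theta$ is affine on each of $U_a$, $V_a$, and $U_a + V_a$ with a common slope. Because $\theta = \pi$ on $U_a + V_a \subseteq I^j_6$, this slope must equal $-\frac{1}{1-b}$. The right endpoint $2b(1/8)^{j-2}$ of $U_a$ lies in $I^j_3$, where $\theta = \pi$ by hypothesis, so the affine formula for $\theta$ on $U_a$ is pinned down by its value at this endpoint, yielding $\theta = \pi$ throughout $U_a$. To transfer this to $V_a$, for each $v \in V_a$ I would pick any $u \in U_a$ and use $(u,v) \in E(\theta)$ together with $(u,v) \in E(\pi)$ to obtain $\theta(v) = \theta(u+v) - \theta(u) = \pi(u+v) - \pi(u) = \pi(v)$.

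Letting $a$ range over $(b(1/8)^{j-2},\, 2b(1/8)^{j-2})$, the union $\bigcup_a V_a$ equals $(b - 2b(1/8)^{j-2},\, b - b(1/8)^{j-2}]$, which is $I^j_4$ with its left endpoint removed. However, that left endpoint $b - 2b(1/8)^{j-2}$ is also the right endpoint of $I^j_3$, so $\theta = \pi$ there by hypothesis, giving $\theta = \pi$ on all of $I^j_4$. Finally, $\theta$ inherits the symmetry condition $\theta(x) + \theta(b-x) = 1$ from $(x, b-x) \in E(\pi) \subseteq E(\theta)$ together with $\theta(b) = \pi(b) = 1$ (using $b \in I^j_6$). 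Applied to $y \in I^j_4$, this yields $\theta(b-y) = 1 - \theta(y) = 1 - \pi(y) = \pi(b-y)$; since $b - I^j_4 = I^j_2$, we conclude $\theta = \pi$ on $I^j_2$ as well.

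The main obstacle is recognizing the product region: one must spot that the triangular subset of $I^j_2 \times I^j_4$ cut out by $x + y \geq b$ lies inside $E(\pi)$ thanks to the clean cancellation among the three linear formulas -- without this cancellation no nontrivial product region is available between these two thin intervals. A secondary subtlety is that no single product $U \times V$ can cover both $I^j_2$ and $I^j_4$ in their entirety, since forcing either factor to be the full interval collapses the other to a single point; this is why the parameter $a$ must range over an open interval and the two missing boundary endpoints must be patched in, one from the hypothesis on $I^j_3$ and the other via the symmetry condition.
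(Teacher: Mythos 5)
Your proof is correct, and it takes a genuinely different route from the paper's. The paper applies the Interval Lemma to a single product $U\times V$ with $U=\left[\tfrac{3}{2}b\left(\tfrac{1}{8}\right)^{j-2},\,2b\left(\tfrac{1}{8}\right)^{j-2}\right]\subseteq I^j_2$ and $V=\left[1-\tfrac{1}{2}b\left(\tfrac{1}{8}\right)^{j-2},\,1\right]\subseteq I^j_6\cup\{1\}$; then $U+V=1+I^j_2$, so the lemma pins down $\theta$ on $I^j_2$ only after a reduction modulo $\Z$, which in turn relies on $\theta$ being periodic (a fact the paper has available from $E(\pi)\subseteq E(\theta)$, $\theta(0)=0$, and Proposition~\ref{obs:periodicity}). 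You instead use the product structure between $I^j_2$ and $I^j_4$ directly: the affine pieces of $\pi_k$ on $I^j_2$ and $I^j_4$ cancel exactly so that the upper triangle $\{(x,y)\in I^j_2\times I^j_4:\,x+y\geq b\}$ lies in $E(\pi)$, with $x+y$ landing inside $I^j_6$. Since a triangle is not itself a product, you sweep it out with a one-parameter family $U_a\times V_a$, apply the Interval Lemma to each, and patch in the two endpoints that the family misses. This entirely sidesteps the modulo-1 wraparound and the periodicity of $\theta$; the trade-off is the extra bookkeeping of the parameter $a$ and the endpoint patching. A further minor inversion is that you establish $\theta=\pi$ on $I^j_4$ first and transfer to $I^j_2$ by symmetry, whereas the paper does $I^j_2$ first. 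Both arguments are sound; yours is arguably slightly more self-contained, the paper's slightly more compact.
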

\begin{proof}
Let $U = \left[\frac{3}{2}b\left(\frac{1}{8}\right)^{j-2}, 2b\left(\frac{1}{8}\right)^{j-2}\right]\subseteq I^j_2$ and $V = \left[1-\frac{1}{2}b\left(\frac{1}{8}\right)^{j-2}, 1\right]\subseteq I_6^j\cup\{1\}$. Observe that $U+V \equiv I^j_2$ (modulo 1). Moreover, $x\in U$ and $y\in V$ implies
\begin{equation*}
\begin{array}{rlr}
\pi(x)+\pi(y) = & \left(\frac{4^{2-j}}{1-b}-\left(\frac{1}{1-b}\right)x\right)+\left(\frac{1}{1-b}-\left(\frac{1}{1-b}\right)y\right)&\text{since }\pi = \pi_k\text{ on }I_2^j\cup I_6^j\\
= & \frac{4^{2-j}}{1-b}-\left(\frac{1}{1-b}\right)\left(x+y-1\right)\\
= &\pi(x+y-1) =\pi(x+y)&\text{by periodicity}.\\
\end{array}
\end{equation*}
Thus, $U\times V\subseteq E(\pi)\subseteq E(\theta)$, and by Lemma~\ref{lem:interval-lemma}, $\pi$ and $\theta$ are affine over $I^j_2$ with the same slope as their corresponding slopes over $V$. Since $\theta = \pi = \pi_k$ over $I^j_6$ and $V \subseteq I^j_6\cup\{1\}$, all three functions have the same slope over $I^j_2$. Since $\theta = \pi$ on $I^j_3$ by assumption, it must be that $\theta\left(2b\left(\frac{1}{8}\right)^{j-2}\right) =\pi\left(2b\left(\frac{1}{8}\right)^{j-2}\right) $. Therefore, $\theta = \pi$ on $I^j_2$. Since $\pi$ satisfies the symmetry condition and $E(\pi)\subseteq E(\theta)$, $\theta$ also satisfies the symmetry condition. Using symmetry, we see that $\theta=\pi$ over $I^j_4$. \qed\end{proof}

\begin{prop}\label{claim:Ij1_induction_general} Let $k \geq 4$ and let $j \in \{3, \ldots, k-1\}$. Let $\pi$ be a minimal valid function such that $\pi = \pi_k$ on $I^j_1\setminus\intr(I^{j+1}_1\cup I^{j+1}_2)$ and $I^j_5\setminus \intr(I_4^{j+1}\cup I^{j+1}_5)$. Then for all locally bounded functions $\theta:\R \to \R_+$ such that $E(\pi) \subseteq E(\theta)$ and $\theta = \pi$ on $I^j_2\cup I^j_3 \cup I^j_4 \cup I^j_6$, we must have $\theta= \pi = \pi_k$ on $I^j_1\setminus\intr(I^{j+1}_1\cup I^{j+1}_2)$ and $I^j_5\setminus \intr(I_4^{j+1}\cup I^{j+1}_5)$.

%Let $\theta, \pi$ be minimal valid functions so that $E(\pi)\subseteq E(\theta)$. Let $k \geq 3$ and let $j \in \{3, \ldots, k-1\}$. Suppose $\theta = \pi$ on $I^j_2\cup I^j_3 \cup I^j_4 \cup I^j_6$ and $\pi = \pi_k$ over $I^j_1\setminus\intr(I^{j+1}_1\cup I^{j+1}_2)$ and $I^j_5\setminus \intr(I_4^{j+1}\cup I^{j+1}_5)$. Then $\theta = \pi$ over $I^j_1\setminus\intr(I^{j+1}_1\cup I^{j+1}_2)$ and $I^j_5\setminus \intr(I_4^{j+1}\cup I^{j+1}_5)$.
\end{prop}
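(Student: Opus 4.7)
The plan mirrors the structure of Propositions~\ref{claim:Ik6_general}, \ref{claim:i33_general}, and \ref{claim:ik2_induction_general}: locate an Interval Lemma application forcing $\theta$ to be affine on the target interval, then use the hypothesis $\theta=\pi$ on the adjacent ``known'' region to pin down the affine parameters. Writing $\alpha = b(\tfrac18)^{j-2}$ and $\alpha' = \alpha/8$, the set $I^j_1\setminus \intr(I^{j+1}_1\cup I^{j+1}_2)$ is the interval $[2\alpha',\alpha]$. Since $[2\alpha',\alpha] \subseteq I^{j+1}_3\cap I^j_1$, Proposition~\ref{prop:properties} together with the definition of $\pi_j$ on $I^j_1$ gives $\pi(x)=\pi_k(x) = s_j x$ on this interval, where $s_j := \frac{2^{j-2}-b}{b-b^2}$; in particular, $\pi_k$ is linear through the origin there.

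The main Interval Lemma application takes $U = V = [2\alpha',\alpha/2]$. Since $U$, $V$, and $U+V = [\alpha/2,\alpha]$ all sit inside the slope-$s_j$ piece where $\pi(x)=s_j x$, the additivity $\pi(u)+\pi(v) = s_j(u+v) = \pi(u+v)$ is automatic for every $(u,v)\in U\times V$, giving $U\times V\subseteq E(\pi)\subseteq E(\theta)$. Lemma~\ref{lem:interval-lemma} then furnishes a common slope for $\theta$ on $U$ and on $U+V$, and since these two intervals share the endpoint $\alpha/2$, $\theta$ is affine on the entire target $[2\alpha',\alpha]$; write $\theta(x) = Ax + B$ there.

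Determining $A$ and $B$ is the crux and is the main obstacle to overcome, because unlike in Proposition~\ref{claim:ik2_induction_general} the known region $I^j_2\cup I^j_3\cup I^j_4\cup I^j_6$ contains no interval of slope $s_j$ from which to read off $A$ directly. The device I will use is the single diagonal additivity $(2\alpha',2\alpha')\in U\times V\subseteq E(\theta)$: the relation $2\theta(2\alpha') = \theta(4\alpha')$, substituted into the affine form, immediately forces $B = 0$, i.e.\ the linear piece of $\theta$ must pass through the origin. The boundary condition $\theta(\alpha) = \pi(\alpha) = s_j\alpha$, available because $\alpha\in I^j_2$ and $\theta=\pi$ there by hypothesis, then fixes $A = s_j$, and hence $\theta = \pi = \pi_k$ on $[2\alpha',\alpha]$.

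The symmetric target $I^j_5 \setminus \intr(I^{j+1}_4 \cup I^{j+1}_5) = [b-\alpha,b-2\alpha']$ is handled by inheriting the symmetry condition. Since $(x,b-x)\in E(\pi)\subseteq E(\theta)$ for every $x$, and $\theta(b) = \pi(b) = 1$ (as $b$ is an endpoint of $I^j_6$ and $\theta=\pi$ there), one has $\theta(x)+\theta(b-x) = 1$; for $y\in[b-\alpha,b-2\alpha']$, setting $x = b-y\in[2\alpha',\alpha]$ gives $\theta(y) = 1-\theta(x) = 1-\pi(x) = \pi(y)$ by Proposition~\ref{prop:symmetric}. The novelty compared with the earlier propositions is precisely the diagonal-additivity step in the third paragraph, which substitutes for the ``slope-matching in the known region'' that was available before.
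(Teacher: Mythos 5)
Your proof is essentially the paper's own: same choice of $U=V=[2\alpha',4\alpha']$ (with $\alpha=b(\tfrac18)^{j-2}$), same Interval Lemma application yielding $\theta$ affine on $I^*=[2\alpha',\alpha]$, same boundary value $\theta(\alpha)=\pi(\alpha)$ from $\alpha\in I^j_2$, and the same symmetry step for $I^j_5$. The only variation is cosmetic: you extract $B=0$ from the single diagonal additivity $2\theta(2\alpha')=\theta(4\alpha')$ and then read off $A$ from $\theta(\alpha)$, whereas the paper iterates the additivity once more to compute $4\theta(2\alpha')=\theta(\alpha)=4\pi(2\alpha')$ and matches endpoints — both rest on the same relations in $U\times U\subseteq E(\theta)$, and indeed the earlier propositions already use diagonal additivity in just this way, so it is not really novel to this step. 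One small slip worth fixing: $I^j_1\setminus\intr(I^{j+1}_1\cup I^{j+1}_2)$ is $\{0\}\cup[2\alpha',\alpha]$, not $[2\alpha',\alpha]$; the isolated point is handled by noting $(0,0)\in E(\pi)\subseteq E(\theta)$ forces $\theta(0)=0=\pi_k(0)$ (the paper does this explicitly), just as $\theta(b)=1=\pi_k(b)$ covers the isolated point $\{b\}$ on the other side.
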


\begin{proof}
By minimality, we have that $\pi(0)=\pi_k(0)=0$. Since $E(\pi)\subseteq E(\theta)$, we have that $\pi(0)+\pi(0)=\pi(0)$ implies $\theta(0)+\theta(0)=\theta(0)$. This shows that $\theta(0) = 0=\pi(0) = \pi_k(0)$. Now consider $I^*:=I^j_1\setminus(\intr(I^{j+1}_1\cup I^{j+1}_2) \cup\{0\}) = \left[2b\left(\frac{1}{8}\right)^{j-1}, b\left(\frac{1}{8}\right)^{j-2}\right]$.
Let
$$U=\left[2b\left(\frac{1}{8}\right)^{j-1}, 4b\left(\frac{1}{8}\right)^{j-1}\right] \subseteq I^*.$$
Note that
$$U+U= \left[4b\left(\frac{1}{8}\right)^{j-1}, b\left(\frac{1}{8}\right)^{j-2}\right]\subseteq I^*$$
and $U\cup (U+U) = I^*$. A direct calculation shows that $I^* \subseteq I^m_3$ for all $m\geq j+1$. Thus, by the definition of $\pi_k$, it follows that $\pi_k(x) = \pi_j(x) = (\frac{2^{j-2}-b}{b-b^2})x$ for all $x\in I^*$. Using this and the fact that $\pi = \pi_k$ over $I^*$, we see that
\begin{align*}
\pi(x)+\pi(y)=\pi_k(x) + \pi_k(x) & = \left(\frac{2^{j-2}-b}{b-b^2}\right)x+\left(\frac{2^{j-2}-b}{b-b^2}\right)y\\
=&\left(\frac{2^{j-2}-b}{b-b^2}\right)(x+y)=\pi_k(x+y) = \pi(x+y)
\end{align*}
for $x,y\in U$, and so $U\times U\subseteq E(\pi)\subseteq E(\theta)$. By Lemma~\ref{lem:interval-lemma}, $\theta$ is affine over $U+U$ and $U$ with the same slope, and thus affine over $I^*$. Similarly, $\pi$ is affine over $I^*$.

Since $\theta=\pi$ on $I^j_2$, we have $\theta\left(b\left(\frac{1}{8}\right)^{j-2}\right)=\pi\left(b\left(\frac{1}{8}\right)^{j-2}\right)$. Also, since $2b\left(\frac{1}{8}\right)^{j-1}, 4b\left(\frac{1}{8}\right)^{j-1}\in U$ and $U\times U\subseteq E(\pi)\subseteq E(\theta)$, we see that

\begin{align*}
4\theta\left(2b\left(\frac{1}{8}\right)^{j-1}\right) & = 2\theta\left(2b\left(\frac{1}{8}\right)^{j-1}+2b\left(\frac{1}{8}\right)^{j-1}\right) \\
& = 2\theta\left(4b\left(\frac{1}{8}\right)^{j-1}\right) = \theta\left(4b\left(\frac{1}{8}\right)^{j-1}+4b\left(\frac{1}{8}\right)^{j-1}\right)\\
& = \theta\left(b\left(\frac{1}{8}\right)^{j-2}\right)\\
& = \pi\left(b\left(\frac{1}{8}\right)^{j-2}\right)\qquad \text{since }\theta = \pi\text{ on }I_2^j&&\\
& = 4\pi\left(2b\left(\frac{1}{8}\right)^{j-1}\right).
\end{align*}
Thus, $\theta\left(2b\left(\frac{1}{8}\right)^{j-1}\right) = \pi\left(2b\left(\frac{1}{8}\right)^{j-1}\right)$, and so $\theta = \pi$ on the endpoints of $I^*$. Since both functions are affine on $I^*$, it follows that $\theta = \pi$ on $I^*$. Since $\pi$ satisfies the symmetry condition and $E(\pi)\subseteq E(\theta)$, $\theta$ also satisfies the symmetry condition. Symmetry of $\theta$ and $\pi$ yields that $\theta = \pi$ over $I^j_5\setminus\intr(I^{j+1}_4\cup I^{j+1}_5)$.\qed
\end{proof}

\begin{lemma}\label{lem:main-lem} Let $k \geq 3$ and $j\in \{3, \dots, k\}$. Let $\pi$ be a minimal valid function such that $\pi = \pi_k$ on $I^j_3 \cup I^j_6$. Then for all locally bounded functions $\theta:\R\to \R_+$ such that $E(\pi) \subseteq E(\theta)$ satisfying $\theta(0)=0,\theta(b) = 1$, we must have $\theta= \pi = \pi_k$ on $I^j_3 \cup I^j_6$.

%Let $\theta, \pi$ be minimal valid functions so that $E(\pi)\subseteq E(\theta)$. Let $k \geq 3$ and $j \in \{3, \ldots, k\}$ so that $\pi = \pi_k$ on $I^j_3 \cup I^j_6$. Then we must have $\theta= \pi$ on $I^j_3 \cup I^j_6$.
\end{lemma}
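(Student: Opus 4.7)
The plan is to split the target set $I^j_3 \cup I^j_6$ into its two pieces and handle them separately. For the $I^j_6$ piece, I observe that $I^j_6 = [b,1) = I^k_6$, so the hypothesis $\pi = \pi_k$ on $I^j_6$ lets me invoke Proposition~\ref{claim:Ik6_general} once to obtain $\theta = \pi = \pi_k$ on $I^j_6 \cup \{1\}$. For the $I^j_3$ piece, the argument will be by induction on $m$, showing $\theta = \pi_k$ on $I^m_3$ for every $m$ running from $3$ up to $j$; when $m = j$ this is exactly what is needed.

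As a preliminary step I will extract the value $\theta(b/2) = \frac{1}{2}$. Since $\pi$ is minimal, the symmetry condition together with $\pi(b) = 1$ forces $\pi(b/2) = \frac{1}{2}$; in particular $(b/2,b/2) \in E(\pi) \subseteq E(\theta)$, and combined with $\theta(b) = 1$ this yields $\theta(b/2) = \frac{1}{2}$. The base case $m = 3$ of the induction is then Proposition~\ref{claim:i33_general}, applied using $\pi = \pi_k$ on $I^3_3 \subseteq I^j_3$ and the just-derived value $\theta(b/2) = \frac{1}{2}$.

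For the inductive step, assume $3 \leq m \leq j-1$ and $\theta = \pi_k$ on $I^m_3$; I will chain the remaining two auxiliary propositions to enlarge the equality to $I^{m+1}_3$. First I apply Proposition~\ref{claim:ik2_induction_general} with parameter $m$: its interval hypothesis holds because $I^m_2 \cup I^m_4 \subseteq I^j_3$, which reduces to the endpoint inequality $(\frac{1}{8})^{m-2} \geq 2(\frac{1}{8})^{j-2}$ and hence to $m \leq j-1$; the equality $\theta = \pi$ on $I^m_3 \cup I^m_6 \cup \{1\}$ is already in hand, so the conclusion is $\theta = \pi_k$ on $I^m_2 \cup I^m_4$. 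Next I apply Proposition~\ref{claim:Ij1_induction_general} with parameter $m$ (the constraint $m \leq k-1$ is met since $m \leq j-1 \leq k-1$); another endpoint check places both $I^m_1 \setminus \intr(I^{m+1}_1 \cup I^{m+1}_2) = [2b(\frac{1}{8})^{m-1},\, b(\frac{1}{8})^{m-2}]$ and its mirror image inside $I^j_3$, so $\pi = \pi_k$ holds there and the conclusion is $\theta = \pi_k$ on both sets. A direct endpoint computation then shows
\[
\bigl(I^m_1 \setminus \intr(I^{m+1}_1 \cup I^{m+1}_2)\bigr) \cup I^m_2 \cup I^m_3 \cup I^m_4 \cup \bigl(I^m_5 \setminus \intr(I^{m+1}_4 \cup I^{m+1}_5)\bigr) = I^{m+1}_3,
\]
which closes the inductive step.

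The main obstacle is really just bookkeeping: at every stage I must confirm that the nested intervals $I^m_\bullet$ for $3 \leq m \leq j$ fit inside $I^j_3 \cup I^j_6$, so that the hypothesis $\pi = \pi_k$ transfers to each auxiliary proposition, and that the telescoping identity in the display above holds exactly. Once these routine interval checks are in place, the lemma follows from a clean induction with no new conceptual ingredient beyond the four propositions already developed in this section.
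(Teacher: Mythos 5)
Your proof is correct and follows essentially the same route as the paper's: reduce the $I^j_6$ piece to Proposition~\ref{claim:Ik6_general}, then an induction pushing equality from $I^3_3$ out to $I^j_3$ via Propositions~\ref{claim:i33_general}, \ref{claim:ik2_induction_general}, and \ref{claim:Ij1_induction_general}, with the hypothesis $\pi = \pi_k$ transferring because the nested intervals sit inside $I^j_3 \cup I^j_6$ (together with the trivially-handled points $0,b,1$). One small slip: your displayed set identity should read $\ldots = I^{m+1}_3 \cup \{0,b\}$ rather than $= I^{m+1}_3$ (the two outer pieces retain the endpoints $0$ and $b$), but since $I^{m+1}_3$ is still contained in that union, the conclusion is unaffected.
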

\begin{proof}

By Proposition~\ref{claim:Ik6_general}, we obtain $\theta= \pi$ on $I^k_6 = I^j_6= I^3_6$. We prove $\theta= \pi$ on $I^j_3$ by induction on $j$. For $j = 3$, the result follows from Proposition~\ref{claim:i33_general} (observe that $E(\pi) \subseteq E(\theta)$ implies that $\theta$ is symmetric and therefore $\theta (\frac{b}{2}) = \frac12$). We assume the result holds for some $j$ such that $3 \leq j \leq k-1$ and show that it holds for $j+1$. Observe that this assumption implies that $k\geq 4$. Note that $I^{j+1}_3 \cup\{0,b\} = (I^{j}_1\setminus\intr(I^{j+1}_1\cup I^{j+1}_2)) \cup I^j_2 \cup I^j_3 \cup I^j_4 \cup (I^{j}_5\setminus\intr(I^{j+1}_4\cup I^{j+1}_5))$. By the induction hypothesis, $\theta = \pi$ on $I^j_3$. Since $\pi(0)=\pi(1)=0$, it follows that $\pi(0) = \pi(1) = \pi(0)+\pi(1)$. Thus, since $E(\pi) \subseteq E(\theta)$, we have $ 0 = \theta(0) = \theta(0)+\theta(1) = \theta(1)$. By Proposition~\ref{claim:ik2_induction_general}, $\theta = \pi$ on $I^j_2\cup I^j_4$. Using this, Proposition~\ref{claim:Ij1_induction_general} implies that $\theta = \pi$ on $(I^{j}_1\setminus\intr(I^{j+1}_1\cup I^{j+1}_2))\cup (I^{j}_5\setminus\intr(I^{j+1}_4\cup I^{j+1}_5))$.
%Then from Propositions~\ref{claim:ik2_induction_general} and~\ref{claim:Ij1_induction_general}, it follows that $\theta = \pi$ on the rest of $I_3^{j+1}$.
\qed
\end{proof}

\begin{proof}[of Proposition~\ref{prop:pi_k_facet}]
If $k=2$, then the fact that $\pi_k$ is a facet follows by Theorem~\ref{thm:2-slope}. Consider the setting $k\geq 3$.

Let $\theta:\R\to \R_+$ be a minimal valid function for $I_b$ such that $E(\pi_k)\subseteq E(\theta)$. Since $\theta$ is minimal, a consequence of Theorem~\ref{thm:minimalinteger} is that $\theta$ is locally bounded. Using $\pi = \pi_k$ in  Lemma~\ref{lem:main-lem}, it follows that $\theta = \pi_k$ on $I^k_3\cup I^k_6$. From Proposition~\ref{claim:ik2_induction_general} and again setting $\pi = \pi_k$, we obtain that $\theta = \pi_k$ on $I_2^k\cup I_4^k$. It is left to show that $\theta = \pi_k$ on $I^k_1$ and $I^k_5$.

Let $U =  \left[0,\frac{b}{2}\left(\frac{1}{8}\right)^{k-2}\right]$ and observe that $U+U  = \left[0, b\left(\frac{1}{8}\right)^{k-2}\right]=I^k_1$. It follows from the definition of $\pi_k$ that $\pi_k(x)+\pi_k(y)=\pi_k(x+y)$ for all $x,y, x+y\in I^k_1$, so $U\times U \subseteq E(\pi_k)\subseteq E(\theta)$. Since $\theta$ and $\pi_k$ are minimal, $\theta(0) = \pi_k(0)=0$. Also, since $\theta = \pi_k$ on $I_2^k$, $\theta\left(b\left(\frac{1}{8}\right)^{k-2}\right) = \pi_k\left(b\left(\frac{1}{8}\right)^{k-2}\right)$. Thus $\theta = \pi_k$ on the endpoints of $I^k_1$. Moreover, Lemma~\ref{lem:interval-lemma} implies that $\theta$ is affine over $I_1^k$. Since $\pi_k$ is also affine over $I_1^k$ and $\theta = \pi_k$ at the endpoints, we have $\theta = \pi_k$ on $I_1^k$. The fact that $\theta = \pi_k$ on $I^k_5$ follows by symmetry. Therefore, $\theta = \pi_k$ everywhere. By Theorem~\ref{thm:facet}, $\pi_k$ is a facet.\qed
\end{proof}

\begin{proof}[of Theorem~\ref{thm:kslopes}]
By Proposition~\ref{prop:slopes}, the function $\pi_k$ is piecewise linear and has $k$ slopes. Every valid function that is a facet is also extreme. By Proposition~\ref{prop:pi_k_facet}, $\pi_k$ is a facet (and therefore extreme). Thus, $\pi_k$ proves the result. \qed
\end{proof}

%\begin{theorem} There exists a continuous extreme function $\pi_{\infty}$ with an infinite number of slopes.
%\end{theorem}
\section{Proof of Theorem~\ref{cor:infinite_slopes}}\label{sec:proof-infinite-slopes}

%For this proof, we use the following lemma (see Lemma 2.11 (ii) in~\cite{bhk-survey}).
%
%\begin{lemma}\label{lem:minimal-cons} Let $\theta$ be a minimal valid function and $\theta_1,\theta_2$ be valid functions such that $\theta = \frac{\theta_1+\theta_2}{2}$. Then $\theta_1, \theta_2$ are minimal and for all $x, y \in \R$, $\theta(x + y) = \theta(x) + \theta(y)$ implies $\theta_i(x+y) = \theta_i(x) + \theta_i(y)$ for both $i=1,2$.
%\end{lemma}

\begin{proof}[of Theorem~\ref{cor:infinite_slopes}]

For $x\in [0,1)\setminus \{0,b\}$, let $N_x$ be the natural number guaranteed by Proposition~\ref{prop:properties} $(iii)$, that is $x \in I^{N_x}_3\cup I^{N_x}_6$ and $\pi_{k}(x) = \pi_{N_x}(x)$ whenever $k\geq N_x$. Define the function $\pi_{\infty}(x):\R\to [0,1]$ by 

\begin{equation*}
\pi_{\infty}(x) = \begin{cases}
0,&x=0\\
1,&x=b\\
\pi_{N_x}(x),&x\in [0,1)\setminus \{0,b\}\\
\pi_{\infty}(x-j),&x\in [j,j+1),~j\in \Z\setminus\{0\}~.
\end{cases}
\end{equation*}

We claim that the sequence $\{\pi_i\}_{i=2}^\infty$ converges uniformly to $\pi_{\infty}$. To this end, let $\epsilon>0$. Choose a large enough $N\in \N$ such that $\frac{2^{4-3N}(2^N-4b)}{1-b} < \epsilon$. Let $x\in \R$ and $k\geq N$. We consider cases on $x$. 

\textbf{Case 1:} Assume $x\in I^k_1\cup I^k_2$. By Proposition~\ref{prop:properties} $(i)$, $I^m_1\cup I^m_2\supseteq I^{m+1}_1\cup I^{m+1}_2$ for all $m\in \N$. For $m\geq k$, the maximum value of $\pi_m$ on $I_1^m\cup I_2^m$ is $\frac{2^{4-3m}(2^m-4b)}{1-b}\leq \frac{2^{4-3k}(2^k-4b)}{1-b}\leq \frac{2^{4-3N}(2^N-4b)}{1-b}$. It follows that $0\leq \pi_{\infty}(x)\leq \frac{2^{4-3N}(2^N-4b)}{1-b}<\epsilon$. Similarly, $0\leq \pi_k(x)\leq \epsilon$. Thus, $|\pi_k(x)-\pi_{\infty}(x)|<\epsilon$. 

\textbf{Case 2:} Assume $x\in I^k_4\cup I^k_5$. Then $|\pi_{\infty}(x)-\pi_k(x)|<\epsilon$ follows by the symmetry of each function in the sequence $\{\pi_i\}_{i=2}^\infty$ along with Case 1.

\textbf{Case 3:} Assume $x\in I^k_3\cup I^k_6$. Note that $I^k_3\cup I^k_6\subseteq  I^m_3\cup I^m_6$ for every $m\in \N$ such that $m\geq k$. Therefore, by definition of each $\pi_m$ for $m\geq k$, we see that $\pi_m(x) = \pi_k(x)$ for all $m\geq k$. Hence $\pi_{\infty}(x) = \pi_k(x)$. 

\textbf{Case 4:}  Assume $x\in \{0,b\}$. Then $\pi_{\infty}(x) = \pi_k(x)$ follows directly by definition of $\pi_{\infty}(0)$ and $\pi_{\infty}(b)$. 

\textbf{Case 5:} Assume that $x\in [j, j+1)$ for $j\in \Z\setminus \{0\}$. Then using the periodicity of each function in $\{\pi_i\}_{i=2}^\infty$ and noting $N_x = N_{x-j}$, we obtain $|\pi_{\infty}(x) - \pi_k(x)| = |\pi_{\infty}(x-j) - \pi_k(x-j)|<\epsilon$ by using Cases 1-4.

Since each $\pi_k$ is minimal, by a standard limit argument, $\pi_\infty$ is minimal (Proposition 4 in \cite{dey1}, Proposition 6.1 in \cite{basu2016light2}). Also, since $\pi_{\infty}$ is the uniform limit of continuous functions, it too is continuous.  
%
%Using Proposition \ref{prop:properties} $(v)$, $\pi_{\infty}$ is continuous over $(0,b)$ and $(b,1)$. For $x=0$ or $x=b$, note that, by definition of $\pi_k$, the maximum value of $\pi_k$ on $I_1^k\cup I_2^k$ is $\frac{2^{4-3k}(2^k-4b)}{1-b}$, which tends to 0 as $k\to\infty$. By symmetry, the smallest value of $\pi_k$ on the interval $I_4^k\cup I_5^k$ tends to $1$ as $k\to\infty$. Hence, the convergence $\pi_k\to \pi_\infty$ is actually uniform. Therefore $\pi_{\infty}$ is continuous everywhere.

We next show that $\pi_{\infty}$ is a facet. Let $\theta$ be any minimal function such that $E(\pi_\infty) \subseteq E(\theta)$. If $x=0$ or $x=b$, then $\pi_{\infty}(x) = \theta(x)$ by the minimality of $\pi_{\infty}$ and $\theta$. So assume that $x\not\in\{0, b\}$. Recall that $x \in I^{N_x}_3\cup I^{N_x}_6$. Observe that $\pi_\infty = \pi_{N_x}$ on $I^{N_x}_3\cup I^{N_x}_6$. Hence, by applying Lemma~\ref{lem:main-lem} with $k = j = N_x$ and $\pi = \pi_\infty$, we obtain that $\theta(x) = \pi_\infty(x)$. Therefore, $\theta = \pi_\infty$ everywhere. By Theorem~\ref{thm:facet}, $\pi_\infty$ is a facet.

We finally verify that $\pi_\infty$ has infinitely many slopes. Note that for any $k \geq 3$, $\pi_\infty = \pi_k$ on $I^k_3\cup I^k_6$ and, by Proposition~\ref{prop:slopes}, $\pi_k$ has $k-1$ different slopes on $I^k_3\cup I^k_6$.\qed
\end{proof}

\section{Facets for higher dimensional group relaxations}\label{section:seq_merge}

One can ask if it is possible to find extreme functions with arbitrary number of slopes for the higher-dimensional infinite group relaxation. For $b\in \R\setminus \Z$, a trivial way to generalize to higher dimensions is to simply define $\pi^n_k : \R^n \to \R_+$ as $\pi^n_k(x_1, x_2, \ldots, x_n) = \pi_k(x_1)$ and $\pi^n_\infty:\R^n \to \R_+$ as $\pi^n_\infty(x_1, x_2, \ldots, x_n) = \pi_\infty(x_1)$. By Theorem 19.35 in~\cite{Richard-Dey-2010:50-year-survey}, the functions $\pi^n_k$ and $\pi^n_\infty$ are extreme for $I_{\tilde{b}}$ for $\tilde{b}\in \{b\}\times \{0\}^{n-1}$. However, one can ask whether there are more ``non-trivial'' examples. In particular, one can ask whether there exist {\em genuinely $n$-dimensional} extreme functions with arbitrary number of slopes for all $n\geq 1$. A function $\theta:\R^n \to \R$ is genuinely $n$-dimensional if there does not exist a linear map $T:\R^n \to \R^{n-1}$ and a function $\theta':\R^{n-1} \to \R$ such that $\theta = \theta' \circ T$. The construction of such a ``non-trivial'' facet is the main result in this section. We use the notation $\chf_m$ to denote the vector of all ones in $\R^m$ and $b\chf_m$ to denote the vector in $\R^m$ such that every component is equal to $b$.

\begin{theorem}\label{thm:seq_merge}
Let $n,k\in \N$. For any $b\in \R\setminus \Z$, there exists a function $\Pi^n_k:\R^n\to \R_+$ such that $\Pi^n_k$ has at least $k$ slopes, is genuinely $n$ dimensional, and is a facet (and thus extreme) for the $n$-dimensional infinite group relaxation $I_{b\mathbf{1}_n}$.
\end{theorem}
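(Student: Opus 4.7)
The plan is to build $\Pi^n_k$ by iteratively applying the sequential-merge operation $\copr$ of Dey and Richard, starting from the one-dimensional facets constructed in Theorem~\ref{thm:kslopes}. Concretely, set $\Pi^1_k := \pi_k$ and define recursively $\Pi^n_k := \pi_2 \copr \Pi^{n-1}_k$ for $n \geq 2$, where $\pi_2$ is the GMI function. Since $\pi_2$ is a facet for $I_b$ by Theorem~\ref{thm:2-slope} and $\pi_k$ is a facet for $I_b$ by Proposition~\ref{prop:pi_k_facet}, each step takes a facet in dimension one and a facet in dimension $n-1$ and produces a candidate facet in dimension $n$ for the group relaxation with right-hand side $b\chf_n$.

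The main step is to invoke the preservation theorem of Dey and Richard for $\copr$: under appropriate ``strong facetness'' or ``minimality plus genuine facet'' hypotheses on the two inputs, the sequential-merge is itself a facet for the higher-dimensional infinite group relaxation. First I would restate the hypotheses of this theorem from~\cite{dey2} precisely and verify them for the pair $(\pi_2, \Pi^{n-1}_k)$. The base case $n=2$ requires checking the hypotheses for $(\pi_2, \pi_k)$, which reduces to properties of $\pi_k$ already established (facetness, continuity, piecewise linearity, and the values at $0$ and $b$). The inductive step then follows by applying the same theorem with $\Pi^{n-1}_k$ in place of $\pi_k$, using that facetness (hence extremeness and minimality) is inherited at each stage.

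Next I would count slopes. The sequential-merge construction transports the slope values of the ``inner'' function $\Pi^{n-1}_k$ into the gradient values of $\Pi^n_k$ along certain coordinate directions, so inductively $\Pi^n_k$ realizes at least as many distinct gradient values as $\pi_k$ does. Since $\pi_k$ has $k$ slopes by Proposition~\ref{prop:slopes}, $\Pi^n_k$ has at least $k$ distinct slopes. Finally, genuine $n$-dimensionality follows because the sequential-merge mixes the one-dimensional coordinate with the remaining coordinates in a nontrivial way: the restriction of $\Pi^n_k$ to the new coordinate axis is (up to scaling) a copy of $\pi_2$, while the restriction to the remaining $n-1$ coordinates is a copy of $\Pi^{n-1}_k$, and both are nonconstant in their respective variables, so $\Pi^n_k$ cannot factor through any linear map $T:\R^n \to \R^{n-1}$.

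The main obstacle will be carefully matching the hypotheses of the Dey--Richard sequential-merge facet-preservation theorem to our functions $\pi_2$ and $\pi_k$, since that theorem typically requires conditions on the ``seed'' and ``base'' functions (e.g., related to the breakpoint structure, continuity, or strong subadditivity) that must be verified from the explicit recursive construction of $\pi_k$ in Section~\ref{sect-constr}. Once those are in hand, the induction on $n$ is routine and the slope count and genuine $n$-dimensionality follow essentially by inspection of the $\copr$ construction.
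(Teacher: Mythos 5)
Your overall plan is the same family of ideas as the paper (iterated sequential merge plus Dey--Richard's facet-preservation theorem), but you compose in the opposite order, and two of your supporting claims have real gaps.

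The paper sets $\Pi^n_k = \pi_k \diamond \Phi_{n-1}$ with the GMI chain $\Phi_{n-1} = \phi\diamond(\phi\diamond\cdots)$ on the inside and $\pi_k$ on the outside, whereas you propose $\Pi^n_k = \pi_2 \diamond \Pi^{n-1}_k$ with $\pi_k$ innermost. This is not a cosmetic difference. In the Dey--Richard formula $(f\diamond g)(x) = \frac{(m{-}1)g(x_{-1}) + f\bigl(\sum x_i - (m{-}1)b\,g(x_{-1})\bigr)}{m}$, if on some cell the inner function has gradient $\gamma$ and the outer one-dimensional function has slope $\sigma$, the $x_{-1}$-component of the resulting gradient is $\frac{(m-1)(1-\sigma b)\gamma + \sigma\chf_{m-1}}{m}$. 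When the outer function is $\pi_2$ and you land in its $\sigma = 1/b$ piece, the factor $1 - \sigma b$ vanishes and every gradient $\gamma$ of the inner $\Pi^{n-1}_k$ collapses to the same value, so ``the slope values transport inductively'' is not automatic; you would have to restrict to regions where the outer GMI is in its descending $\sigma = -\tfrac{1}{1-b}$ piece and verify such regions exist for each slope of $\Pi^{n-1}_k$. The paper sidesteps this entirely: on the box $B_{n-1}$ one has $\Phi_{n-1}(x_{-1}) = \tfrac{1}{(n-1)b}\sum_{i\ge 2} x_i$, so the argument of $\pi_k$ reduces to exactly $x_1$ and the $k$ slopes of $\pi_k$ yield $k$ distinct gradients immediately (Proposition~\ref{prop:atleastkslopes}). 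You should also note that to apply Theorem~\ref{thm:dey-richard-seq-merge} at all you need $[\pi_k]_b$ nondecreasing, which forces $b\in[1/2,1)$ (otherwise $\pi_k$ has a slope exceeding $1/b$); the paper handles $b\in(0,1/2]$ separately by reflection (Theorem~\ref{thm:reflection}).

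The genuine $n$-dimensionality argument is the clearest gap. ``The restrictions to the new coordinate axis and to the remaining coordinates are both nonconstant, so $\Pi^n_k$ cannot factor through a linear map $T:\R^n\to\R^{n-1}$'' is false as a general implication: the function $(x_1,x_2)\mapsto \phi(x_1+x_2)$ is nonconstant along both coordinate axes yet factors through $T(x_1,x_2)=x_1+x_2$, hence is not genuinely $2$-dimensional. The paper instead characterizes the zero set: Proposition~\ref{prop:0_on_integers} shows $\Pi^n_k(x)=0$ iff $x\in\Z^n$, and then Proposition~\ref{prop:genuine} notes that if $\Pi^n_k = \Psi\circ T$ with $T$ linear, any nonintegral $x\in\ker T$ gives $\Pi^n_k(x)=\Pi^n_k(0)=0$, a contradiction. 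You would need an argument of this type (or some other structural invariant) to close that step.
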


We provide a constructive argument for the proof of Theorem~\ref{thm:seq_merge} using the {\em sequential-merge} operation developed by Dey and Richard~\cite{dey2}. In particular, we employ Theorem 5 in~\cite{dey2}, the assumptions of which will be proved throughout this section. The proof of Theorem~\ref{thm:seq_merge} is the collection of these results and is presented at the end of the section. We begin with some definitions relating to sequential-merge.
\medskip

\noindent{\textbf{Notation:}} Let $m\in \N$ and $x,y\in \R^m$. For $i\in \{1, \dots, m\}$, we use the notation $x_i$ to denote the $i$-th component of $x$. We define the vectors $\lfloor x \rfloor := (\lfloor x_1\rfloor, \lfloor x_2\rfloor, ...,\lfloor x_n\rfloor)\in \R^m$ and $x_{-1}:=(x_2, \ldots, x_m)\in \R^{m-1}$. We use the notation $x\leq y$ to indicate that $x_i\leq y_i$ for each $i\in \{1, \dots, m\}$.  \hspace*{\stretch{1}} $\diamond$

\smallskip

%For $n\geq 2$, a function $\theta:\R^n\to \R$ is \emph{piecewise linear} if $\R^n$ can be divided into polytopes such that $\theta$ is affine over each polytope, see~\cite{dey2}. %{\color{red}JOE: Does this need to be more formal? They use a similar definition in~\cite{dey2}}

Let $b\in [0,1)^n \setminus \{0\}$. The \textit{lifting-space representation} of any function $\theta:\R^n\to \R$ is $[\theta]_b:\R^n \to \R$ defined by
\begin{equation*}
[\theta]_b(x) := \sum_{i=1}^nx_i-\sum_{i=1}^nb_i\theta\left(x-\lfloor x\rfloor\right).
\end{equation*}
\begin{remark}\label{rem:diff-lifting} The definition for lifting-space representation in~\cite{dey2} is given only for valid functions which in that context are periodic modulo $\Z^n$. Note that if $\theta$ is periodic, then $[\theta]_b(x) = \sum_{i=1}^nx_i-\sum_{i=1}^nb_i\theta\left(x\right)$.\hspace*{\stretch{1}} $\diamond$
\end{remark}
The \textit{group-space representation} of any function $\psi:\R^n\to \R$ is $[\psi]^{-1}_b:\R^n\to \R$ defined by
\begin{equation*}
[\psi]_b^{-1}(x) := \frac{\sum_{i=1}^n x_i - \psi(x)}{\sum_{i=1}^n b_i}.
\end{equation*}
A function $\psi:\R^n\to \R$  is called {\em superadditive} if $-\psi$ is subadditive. $\psi$ is called {\em pseudo-periodic} if $\psi(x+e^i) = \psi(x)+1$ for all standard unit vectors $e^i\in \R^n$ and $x\in \R^n$. %Note that for pseudo-periodic functions $\theta$, The next result follows from Proposition 3 in~\cite{dey2}.

We collect some useful facts above the above definitions below.

\begin{prop}\label{obs:represent} Let $b\in [0,1)^n\setminus \{0\}$. 
\begin{enumerate}
\item[(i)] If $\psi:\R^n \to \R$ is pseudo-periodic, then $[\psi]^{-1}_b$ is periodic modulo $\Z^n$.
%\item[(ii)] $\theta:\R^n \to \R$ is superadditive if and only if $[\theta]^{-1}_b$ is subadditive.
\item[(ii)] If $\pi$ is a minimal valid function for $I_b$, then $[\pi]_b$ is superadditive and pseudo-periodic.
\item[(iii)] If $\psi$ is pseudo-periodic then $[[\psi]^{-1}_b]_b = \psi$.%, i.e., $[\cdot]_b$ is a left-inverse for $[\cdot]^{-1}_b$ (as function composition).
\end{enumerate}

\end{prop}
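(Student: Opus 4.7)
The plan is to dispatch the three parts by direct computation, after one preparatory observation: pseudo-periodicity on unit-vector translates extends to arbitrary integer translates. From $\psi(x+e^i)=\psi(x)+1$, substituting $x\mapsto x-e^i$ gives $\psi(x-e^i)=\psi(x)-1$; iterating in each coordinate yields $\psi(x+z)=\psi(x)+\sum_{i=1}^n z_i$ for every $z\in\Z^n$.

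For part (i), I would plug this identity into the definition of $[\psi]^{-1}_b$: for any $z\in\Z^n$, the numerator $\sum_i(x_i+z_i)-\psi(x+z)$ equals $\sum_i x_i + \sum_i z_i - \psi(x) - \sum_i z_i = \sum_i x_i - \psi(x)$, so the $z$-contribution cancels and $[\psi]^{-1}_b(x+z) = [\psi]^{-1}_b(x)$. (Note that $\sum_i b_i>0$ because $b\in[0,1)^n\setminus\{0\}$, so the denominator is nonzero.)

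For part (ii), I would invoke Theorem~\ref{thm:minimalinteger}, which gives that a minimal $\pi$ is nonnegative, subadditive, and periodic modulo $\Z^n$. Periodicity implies $\pi(x-\lfloor x\rfloor)=\pi(x)$, so the lifting-space representation collapses to $[\pi]_b(x) = \sum_i x_i - \bigl(\sum_i b_i\bigr)\pi(x)$. From this simplified form, pseudo-periodicity is immediate: $[\pi]_b(x+e^i)-[\pi]_b(x) = 1 - \bigl(\sum_i b_i\bigr)\bigl(\pi(x+e^i)-\pi(x)\bigr) = 1$, since $\pi$ is periodic. For superadditivity, I compute $[\pi]_b(x+y) - [\pi]_b(x) - [\pi]_b(y) = -\bigl(\sum_i b_i\bigr)\bigl(\pi(x+y) - \pi(x) - \pi(y)\bigr) \ge 0$, using subadditivity of $\pi$ together with $\sum_i b_i>0$.

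For part (iii), part (i) already guarantees $[\psi]^{-1}_b$ is periodic modulo $\Z^n$, so $[\psi]^{-1}_b(x-\lfloor x\rfloor) = [\psi]^{-1}_b(x)$. Substituting into the definition of $[\,\cdot\,]_b$ gives $[[\psi]^{-1}_b]_b(x) = \sum_i x_i - \bigl(\sum_i b_i\bigr)\cdot\frac{\sum_i x_i - \psi(x)}{\sum_i b_i} = \psi(x)$, as desired. No step presents a significant obstacle; the only care needed is to check that pseudo-periodicity is correctly extended to $\Z^n$ and to keep track of the sign of $\sum_i b_i$, both of which are routine given the hypothesis $b\in[0,1)^n\setminus\{0\}$.
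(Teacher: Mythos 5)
Your handling of parts (i) and (iii) is essentially identical to the paper's: you derive $\psi(x+z)=\psi(x)+\sum_i z_i$ for integer $z$ and substitute, and for (iii) you use the periodicity established in (i) together with the collapsed formula $[\theta]_b(x)=\sum_i x_i-\bigl(\sum_i b_i\bigr)\theta(x)$ for periodic $\theta$ (what the paper records as Remark~\ref{rem:diff-lifting}). The difference lies in part (ii): the paper simply cites Proposition~3 of Dey and Richard~\cite{dey2}, whereas you give a direct argument. Your computation is correct: by Theorem~\ref{thm:minimalinteger} a minimal $\pi$ is periodic modulo $\Z^n$ and subadditive; periodicity collapses $[\pi]_b$ to $\sum_i x_i-\bigl(\sum_i b_i\bigr)\pi(x)$, pseudo-periodicity then reduces to $\pi(x+e^i)=\pi(x)$, and superadditivity follows from $[\pi]_b(x+y)-[\pi]_b(x)-[\pi]_b(y)=\bigl(\sum_i b_i\bigr)\bigl(\pi(x)+\pi(y)-\pi(x+y)\bigr)\ge 0$, using $\sum_i b_i>0$. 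Your route is more self-contained (no external citation needed) at the cost of a few extra lines, and it makes the role of each hypothesis of minimality transparent, which the citation obscures; the paper's choice simply delegates to a reference that proves the same thing. Both are valid, and yours arguably fits the paper's expository style better, since Remark~\ref{rem:diff-lifting} already sets up exactly the simplified formula you use.
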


\begin{proof} If $\psi:\R^n \to \R$ is pseudo-periodic, then observe that for any $x\in \R^n$ and unit vector $e^i$, $\psi(x) = \psi((x-e^i)+e^i) = \psi(x-e^i) + 1$, i.e., $\psi(x-e^i) = \psi(x) - 1$. By iterating, we observe that $\psi(x + z) = \psi(x) + \sum z_i$ for all $x \in \R^n$ and $z \in \Z^n$. Therefore, $[\psi]_b^{-1}(x + z) = \frac{\sum_{i=1}^n x_i + \sum_{i=1}^n z_i- \psi(x) - \sum_{i=1}^nz_i }{\sum_{i=1}^n b_i} = \frac{\sum_{i=1}^n x_i - \psi(x)}{\sum_{i=1}^n b_i} = [\psi]_b^{-1}(x)$. This establishes $(i)$.
\medskip

%(ii) follows from the definition $[\psi]_b^{-1}(x) = \frac{\sum_{i=1}^n x_i}{\sum_{i=1}^n b_i} - \frac{1}{\sum_{i=1}^n b_i}\psi(x)$ and the facts that $\frac{\sum_{i=1}^n x_i}{\sum_{i=1}^n b_i}$ is a linear function of $x$, and $\sum_{i=1}^n b_i\geq 0$.
%\medskip

$(ii)$ follows from Proposition 3 in~\cite{dey2}.
\medskip

To establish $(iii)$, first observe that by $(i)$ above $[\psi]^{-1}_b$ is periodic modulo $\Z^n$. Therefore, by Remark~\ref{rem:diff-lifting}, $[[\psi]^{-1}_b]_b(x) = \sum_{i=1}^nx_i-\sum_{i=1}^nb_i[\psi]^{-1}_b\left(x\right) = \sum_{i=1}^nx_i-\sum_{i=1}^nb_i\frac{\sum_{j=1}^n x_j - \psi(x)}{\sum_{j=1}^n b_j} = \psi(x)$. \qed\end{proof}

\begin{remark}\label{rem:diff-group} The definition for group-space representation in~\cite{dey2} is given only for superadditive and pseudo-periodic $\psi$, and the domain of $[\psi]_b^{-1}(x)$ is defined to be $[0,1)^n$. We give the definition for more general functions and allow the domain of $[\psi]_b^{-1}(x)$ to be $\R^n$ to make calculations easier. By Proposition~\ref{obs:represent} $(i)$, restricted to pseudo-periodic  functions, our definition of $[\psi]_b^{-1}(x)$ is simply a periodization of the definition from~\cite{dey2}. \hspace*{\stretch{1}} $\diamond$%\footnote{In fact, if $\psi$ is superadditive, then $[\psi]_b^{-1}(x)$ is subadditive, and vice versa, but we will not need this in this paper.}
\end{remark}

Let $b_1\in (0,1)$ and $b_2\in [0,1)^m\setminus \{0\}$. If $f:\R \to \R_+$ and $g:\R^m\to \R_+$ are minimal valid functions for $I_{b_1}$ and $I_{b_2}$, respectively, then the \textit{sequential merge} $f\diamond g: \R\times \R^{m}\to \R$ is defined as
\begin{equation*}
f\diamond g := \left[\psi \right]^{-1}_{(b_1, b_2)}
%\left[[\pi]_{b_1}(x_1-\lfloor x_1\rfloor+[\psi]_{b_2}(x_2-\lfloor x_2\rfloor))\right]^{-1}_{(b_1, b_2)}(x_1-\lfloor x_1\rfloor, x_2-\lfloor x_2\rfloor ).
\end{equation*}
where $\psi: \R \times \R^m \to \R$ is the function $\psi(x_1, x_2) =  [f]_{b_1}(x_1 + [g]_{b_2}(x_2))$.

\begin{remark}\label{rem:diff-seq-merge} It is not true in general that if $f, g$ are minimal, then $f\diamond g$ is minimal. However, when $f, g$ are minimal valid functions, $f\diamond g$ is periodic modulo $\Z^n$. Indeed, since $f\diamond g := \left[\psi \right]^{-1}_{(b_1, b_2)}$, by Proposition~\ref{obs:represent} $(i)$ it suffices to check that $\psi(x_1, x_2)$ defined above is pseudo-periodic when $f, g$ are minimal. By Proposition~\ref{obs:represent} $(ii)$, $[f]_{b_1}, [g]_{b_2}$ are both pseudo-periodic. Therefore, $\psi(x_1 + 1, x_2) = [f]_{b_1}(x_1 + 1 + [g]_{b_2}(x_2)) = [f]_{b_1}(x_1 + [g]_{b_2}(x_2)) + 1 = \psi(x_1,x_2)+1$, using pseudo-periodicity of $[f]_{b_1}$. On the other hand, for any unit vector $e^i \in \R^m$, we have $\psi(x_1, x_2 + e^i) = [f]_{b_1}(x_1 + [g]_{b_2}(x_2 + e^i)) = [f]_{b_1}(x_1+ [g]_{b_2}(x_2) + 1) = [f]_{b_1}(x_1 + [g]_{b_2}(x_2)) + 1=\psi(x_1,x_2)+1$, where the second equality uses pseudo-periodicity of $[g]_{b_2}$ and the last equality uses pseudo-periodicity of $[f]_{b_1}$. %Since $f\diamond g := \left[\psi \right]^{-1}_{(b_1, b_2)}$, by Proposition~\ref{obs:represent}$(i)$, $f\diamond g$ is periodic.

In Dey and Richard's original definition from~\cite{dey2}, the domain of $f\diamond g$ is defined as $[0,1) \times [0,1)^m$, and restricted to this domain, our definition is exactly the same as theirs. Thus, our definition over $\R \times \R^m$ is simply a periodization of Dey and Richard's definition for $f\diamond g$, when $f, g$ are minimal functions. Since we will only apply the sequential merge operation on minimal valid functions, there is no discrepancy between the definition in~\cite{dey2} and our definition. \hspace*{\stretch{1}} $\diamond$
\end{remark}

For the remainder of this section, we consider $b\in [1/2, 1)$. Although the specific construction of $\pi_k$ provided in Section~\ref{sect-constr} uses $b\in (0,1/2]$, creating $\pi_k$ for $b\in [1/2, 1)$ can be done by defining $\pi_k(x) := \tilde{\pi}_k(1-x)$ for $x\in [0,1]$ (and then enforcing periodicity by $\Z$), where $\tilde{\pi}_k$ is the function for $I_{1-b}$ constructed in Section~\ref{sect-constr} (see also Theorem~\ref{thm:reflection}). 

Let $\phi$ denote the GMI function for $I_b$ (defined in~\eqref{Gom-funct}). For $n\in \N$, $n\geq 2$, let $\Pi^n_k:\R^n\to \R$ be defined by
\begin{equation*}
\Pi^n_k(x_1, \dots, x_n) := \pi_k\diamond \left(\phi\diamond\left(\phi\diamond\left(...\diamond\phi\right)...\right)\right)(x_1, \dots, x_n),
\end{equation*}
where the sequential merge contains one copy of $\pi_k$ and $n-1$ copies of $\phi$. For $m \in \N$ and $m \geq 1$, let $\Phi_m$ denote $\phi\diamond\left(\phi\diamond\left(...\diamond\phi\right)...\right)$, where there are $m$ copies of $\phi$ in the sequential merge. %One can show using induction on $m$ that $\Phi_m(x)=0$ if and only if $x\in \Z^m$.

%A nice formula for the sequential merge procedure was stated in Proposition 5 of~\cite{dey2} and is provided below. %We prove it here for completeness.
%
%\begin{prop}\label{obs:formula} For any $v = (v_1, \ldots, v_n) \in \R^n$, \begin{equation*} \Pi_k(v_1, v_2, \dots, v_n) = \frac{ (n-1)\Phi_{n-1}(v_2, \dots, v_n)+\pi_k\bigg(\sum_{i=1}^nv_i-b\Phi_{n-1}(v_2, \dots, v_n)\bigg)}{n}
%\end{equation*}
%\end{prop}
%

We require a couple of definitions (also taken from~\cite{dey2}) before we proceed with the proof of Theorem~\ref{thm:seq_merge}. 

\begin{enumerate}
\item For $m\in \N$, a function $\theta: \R^m \to \R$ is {\em nondecreasing} if for all $x, y \in \R^m$, $x \leq y$ implies $\theta(x) \leq \theta(y)$.

\item For $m\in \N$ and a valid function $\pi :\R^m \to \R_+$, the set $E(\pi)$ defined in~\eqref{eq:equality} is said to be {\em unique up to scaling} if for any continuous nonnegative function $\theta: \R^m \to \R_+$ satisfying $E(\pi)\subseteq E(\theta)$, $\theta$ is a scaling of $\pi$, i.e,  $\theta = \alpha \pi$ where $\alpha \in \R$.
\end{enumerate}

\begin{remark}\label{remark:GMI_upts}
Dey and Richard note that every extreme function for $I_b$ is unique up to scaling, see the top of page 6 in~\cite{dey2}. In particular, the GMI function $\phi = \pi_2$ is unique up to scaling. \hspace*{\stretch{1}} $\diamond$
\end{remark}

As mentioned in Remark~\ref{rem:diff-seq-merge}, $f\diamond g$ is not necessarily minimal even if $f,g$ are both minimal. The following proposition gives conditions under which $f\diamond g$ is indeed minimal and will be useful in what follows.
\begin{prop}\label{prop:preserve_minimal}\cite[Proposition 7]{dey2}
Let $m\in \N$, $b^1\in (0,1)$, and $b^2\in [0,1)^m\setminus \{0\}$. Let $f:\R \to \R$ be a minimal function for $I_{b^1}$ and $g:\R^m\to \R$ be a minimal function for $I_{b^2}$ such that $[f]_{b^1}$ is nondecreasing. Then $f\diamond g$ is minimal for $I_{(b^1, b^2)}$. 
\end{prop}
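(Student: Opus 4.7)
The plan is to verify the four conditions of Theorem~\ref{thm:minimalinteger} for $f \diamond g$: nonnegativity, vanishing on $\Z^{m+1}$, subadditivity, and the symmetry condition. Throughout I work with the explicit formula
\[
(f\diamond g)(x_1,x_2) \;=\; \frac{x_1 + \sum_i (x_2)_i - [f]_{b^1}\bigl(x_1+[g]_{b^2}(x_2)\bigr)}{b^1 + \sum_i (b^2)_i},
\]
and invoke Proposition~\ref{obs:represent}(ii), by which both $[f]_{b^1}$ and $[g]_{b^2}$ are superadditive and pseudo-periodic. Periodicity of $f\diamond g$ modulo $\Z^{m+1}$ is already handled by Remark~\ref{rem:diff-seq-merge}.

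For the value at $0$, minimality of $f$ and $g$ gives $f(0)=g(0)=0$, so $[g]_{b^2}(0)=0$ and $[f]_{b^1}(0)=0$, whence $(f\diamond g)(0)=0$; periodicity then yields $(f\diamond g)(z)=0$ for every $z\in\Z^{m+1}$. For nonnegativity, the inequalities $[f]_{b^1}(y) \le y$ and $[g]_{b^2}(x_2) \le \sum_i (x_2)_i$ (both immediate from $f,g \ge 0$ together with $b^1, (b^2)_i \ge 0$) give
\[
[f]_{b^1}\bigl(x_1+[g]_{b^2}(x_2)\bigr) \le x_1+[g]_{b^2}(x_2) \le x_1 + \sum_i (x_2)_i,
\]
so the numerator of $(f\diamond g)(x_1,x_2)$ is nonnegative.

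Subadditivity is the main step, and is where the nondecreasing hypothesis on $[f]_{b^1}$ enters. After clearing the positive denominator, the inequality $(f\diamond g)(x)+(f\diamond g)(y)\ge (f\diamond g)(x+y)$ reduces to
\[
[f]_{b^1}\bigl((x_1+y_1)+[g]_{b^2}(x_2+y_2)\bigr) \;\ge\; [f]_{b^1}\bigl(x_1+[g]_{b^2}(x_2)\bigr) + [f]_{b^1}\bigl(y_1+[g]_{b^2}(y_2)\bigr).
\]
I prove this by a monotone-then-superadditive chain: superadditivity of $[g]_{b^2}$ gives $(x_1+y_1)+[g]_{b^2}(x_2+y_2)\ge (x_1+[g]_{b^2}(x_2))+(y_1+[g]_{b^2}(y_2))$; monotonicity of $[f]_{b^1}$ lets me replace the left-hand argument by this larger value; superadditivity of $[f]_{b^1}$ then splits the sum. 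This is the only place the monotonicity hypothesis is used, and it is essential because $[g]_{b^2}$ can be strictly superadditive, so one really needs a monotone outer function to absorb the slack.

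For the symmetry condition, I first establish the antisymmetry identity $[h]_\beta(x)+[h]_\beta(\beta-x)=0$ for any minimal valid function $h$ on $\R^d$ with group vector $\beta$: this is a one-line calculation from the formula for $[h]_\beta$ using periodicity of $h$ to rewrite $h((\beta-x)-\lfloor\beta-x\rfloor)=h(\beta-x)$ and then the symmetry $h(x)+h(\beta-x)=1$. Applied to $g$ with $\beta=b^2$, this gives $[g]_{b^2}(b^2-x_2)=-[g]_{b^2}(x_2)$, so
\[
(b^1-x_1)+[g]_{b^2}(b^2-x_2) \;=\; b^1 - \bigl(x_1 + [g]_{b^2}(x_2)\bigr);
\]
applying the identity again with $\beta=b^1$ then yields $[f]_{b^1}((b^1-x_1)+[g]_{b^2}(b^2-x_2)) = -[f]_{b^1}(x_1+[g]_{b^2}(x_2))$. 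Adding the formulas for $(f\diamond g)(x)$ and $(f\diamond g)((b^1,b^2)-x)$ makes the two $[f]_{b^1}$ contributions cancel, leaving numerator $b^1+\sum_i(b^2)_i$ and hence value $1$. The chief obstacle throughout is the subadditivity step, since it is the only point that nontrivially combines superadditivity with monotonicity; every other item is straightforward bookkeeping with the explicit formula.
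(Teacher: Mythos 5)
Your argument is correct and complete. Note that the paper does not actually prove this proposition---it cites it directly from Dey and Richard \cite[Proposition~7]{dey2}---so there is no internal proof to compare against; you have supplied a self-contained verification. Your strategy (check the conditions of Theorem~\ref{thm:minimalinteger} directly from the explicit group-space formula, using Proposition~\ref{obs:represent}~(ii) to harvest superadditivity and pseudo-periodicity of the lifting-space representations, and Remark~\ref{rem:diff-seq-merge} for periodicity of $f\diamond g$) is the natural one, and it is essentially what Dey and Richard do in the original reference. The three components are all handled correctly: nonnegativity follows from $[f]_{b^1}(t)\le t$ and $[g]_{b^2}(x_2)\le\sum_i(x_2)_i$; the subadditivity reduction to the one inequality
\[
[f]_{b^1}\bigl((x_1+y_1)+[g]_{b^2}(x_2+y_2)\bigr)\ \ge\ [f]_{b^1}\bigl(x_1+[g]_{b^2}(x_2)\bigr)+[f]_{b^1}\bigl(y_1+[g]_{b^2}(y_2)\bigr)
\]
is right, and the ``superadditive inner, monotone-then-superadditive outer'' chain is exactly where the hypothesis on $[f]_{b^1}$ must be used; and the antisymmetry identity $[h]_\beta(x)+[h]_\beta(\beta-x)=0$ (valid for all $x$ when $h$ is periodic and satisfies the symmetry condition, which minimal $h$ does) correctly propagates through $[g]_{b^2}$ and then $[f]_{b^1}$ to give the symmetry condition for $f\diamond g$. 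You correctly observe that the monotonicity hypothesis is used only once, in the subadditivity step, and that it is genuinely needed there to absorb the slack coming from the strict superadditivity of $[g]_{b^2}$; this is the right diagnosis of why $f\diamond g$ can fail to be minimal when $[f]_{b^1}$ is not nondecreasing.
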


To deduce that $f \diamond g$ is a {\em facet}, one needs additional assumptions. We now state the main theorem that guarantees facetness from the sequential-merge operation, due to Dey and Richard~\cite{dey2}.

\begin{theorem}\label{thm:dey-richard-seq-merge}\cite[Theorem 5]{dey2}
Let $m\in \N$, $b^1\in (0,1)$, and $b^2\in [0,1)^m\setminus \{0\}$. Let $f:\R \to \R$ be a minimal function for $I_{b^1}$ and $g:\R^m\to \R$ be a minimal function for $I_{b^2}$ such that the following hold:
\begin{enumerate}
\item $f$ and $g$ are piecewise linear, continuous functions,
\item $[f]_{b^1}$ and $[g]_{b^2}$ are both nondecreasing,%{\red $[\cdot]_{b}$ is defined only for minimal functions.}
\item $E(f)$ and $E(g)$ are unique up to scaling, and
\item $f$ and $g$ are facets for their respective infinite group relaxations.
\end{enumerate}
Then $f \diamond g$ is a facet for $I_{(b^1,b^2)}$.\footnote{The definition of facet used in~\cite{dey2} is slightly different from our definition, and corresponds to what the authors in~\cite{basu2016light} refer to as {\em weak facet}. However, the proof in~\cite{dey2} works for the definition of facet used in this current manuscript. Moreover, we insist on $f, g$ to be minimal valid functions, whereas Dey and Richard consider valid functions that are periodic modulo $\Z^n$, which is a slightly weaker hypothesis than minimality.}
\end{theorem}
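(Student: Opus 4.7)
\medskip
\noindent\textbf{Proof proposal for Theorem~\ref{thm:dey-richard-seq-merge}.}

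The plan is to verify the hypothesis of the Facet Theorem (Theorem~\ref{thm:facet}). First, hypothesis~(2) together with Proposition~\ref{prop:preserve_minimal} guarantees that $f \diamond g$ is itself a minimal valid function for $I_{(b^1,b^2)}$. So fix an arbitrary minimal valid function $\theta: \R \times \R^m \to \R_+$ with $E(f \diamond g) \subseteq E(\theta)$, and the goal becomes to show $\theta = f \diamond g$ pointwise.

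I would pass to lifting-space representations. Write $F = [f]_{b^1}$, $G = [g]_{b^2}$, and $\Theta = [\theta]_{(b^1,b^2)}$, and let $\Psi(x_1,x_2) = F(x_1 + G(x_2))$ so that $f\diamond g = [\Psi]^{-1}_{(b^1,b^2)}$ by definition and $[f\diamond g]_{(b^1,b^2)} = \Psi$ by Proposition~\ref{obs:represent}(iii). Because passing between $\pi$ and $[\pi]_{(b^1,b^2)}$ is an affine invertible substitution on each fiber, the inclusion $E(f\diamond g)\subseteq E(\theta)$ translates to a corresponding inclusion of equality sets for $\Psi$ and $\Theta$ (with $\Theta$ superadditive and pseudo-periodic by Proposition~\ref{obs:represent}(ii)). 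The advantage is that $\Psi$ factors as a composition through the one-dimensional variable $x_1 + G(x_2)$, which is what makes the axis slices tractable.

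Next I would pin down $\theta$ on each coordinate axis. Since $G(0)=0$, we have $\Psi(x_1,0) = F(x_1)$; hence every additivity relation in $E(F)$ lifts to one in $E(\Psi)\subseteq E(\Theta)$ on the slice $x_2=0$. Translating back, the restriction $x_1 \mapsto \theta(x_1,0)$ inherits all additivity relations of $f$, so hypothesis~(3) (uniqueness up to scaling of $E(f)$) forces this restriction to equal $\alpha f$ for some $\alpha>0$. Hypothesis~(4) that $f$ is a facet, combined with the minimality normalization $\theta((b^1,b^2))=1$ and the additivity $(f\diamond g)(b^1,0)+(f\diamond g)(0,b^2)=1$ carried over to $\theta$, pins $\alpha = b^1/(b^1+\sum b^2_i)$, matching $f\diamond g$ on this slice; a symmetric argument (using uniqueness up to scaling of $E(g)$) matches $\theta$ with $f\diamond g$ on the slice $x_1=0$.

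Finally I would propagate the equality off the axes. For a general point $(x_1,x_2)$, I want to exhibit an additivity relation in $E(\Psi)$ that connects $\Psi(x_1,x_2)$ to already-determined slice values. Concretely, if $x_2, x'_2\in \R^m$ satisfy $(x_2,x'_2)\in E(G)$, then $G(x_2+x'_2)=G(x_2)+G(x'_2)$; superadditivity of $F$ then yields an additivity in $E(\Psi)$ precisely when $F$ is additive at the shifted point $(x_1+G(x_2), x'_1+G(x'_2))$. Since $f$ (hence $F$) is piecewise linear and continuous by hypothesis~(1), $F$ is affine on each piece of its complex, so there are abundant additivity relations. By choosing pairs whose one coordinate lies on an axis and whose arguments fall within a single linear piece of $F$ (and analogously for $G$), I can invoke the Interval Lemma (Lemma~\ref{lem:interval-lemma}) on one-dimensional slices of $\Theta$ to conclude that $\Theta$ is affine on matching pieces, with slopes and endpoint values forced to agree with $\Psi$ by the axis determination of the previous step. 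Continuity and piecewise linearity let me glue these local agreements over all of $\R\times\R^m$. Converting back from $\Theta$ to $\theta$ gives $\theta = f\diamond g$, so by Theorem~\ref{thm:facet}, $f\diamond g$ is a facet.

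The main obstacle will be the propagation step: one must identify a \emph{sufficient} family of additivity relations in $E(\Psi)$ that reach every point $(x_1,x_2)$ from the axes. Hypotheses~(1)--(4) are all needed — piecewise linearity and continuity to invoke the Interval Lemma and manage the polyhedral complex of $F$ and $G$, nondecreasingness to ensure lifting-space compositions behave well, uniqueness up to scaling to nail down each slice from its own equality structure, and facetness to rule out nontrivial rescalings after normalization. A careful case analysis, organized piece by piece along the polyhedral complexes of $F$ and $G$, is the essential technical content.
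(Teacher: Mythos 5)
First, a point of comparison: the paper does not prove Theorem~\ref{thm:dey-richard-seq-merge} at all. It is imported as a black box from Dey and Richard \cite[Theorem 5]{dey2}; the paper's only contribution is the footnote observing that the argument of \cite{dey2} goes through for the stronger notion of facet used in this manuscript and under the minimality hypothesis on $f,g$. So there is no in-paper proof to measure your proposal against, and it must be judged as a reconstruction of the argument in \cite{dey2}.

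Your outline does capture the right framework --- minimality of $f\diamond g$ via Proposition~\ref{prop:preserve_minimal}, the Facet Theorem~\ref{thm:facet}, passing to the lifting space where $[f\diamond g]_{(b^1,b^2)}=\Psi$ with $\Psi(x_1,x_2)=[f]_{b^1}(x_1+[g]_{b^2}(x_2))$ (Proposition~\ref{obs:represent}), restriction to the coordinate slices, and propagation of equality off the slices --- and this is broadly the shape of Dey and Richard's proof. But two steps as written have genuine gaps. (i) Pinning the slice scaling: uniqueness up to scaling gives $\theta(\cdot,0)=\alpha f$ and $\theta(0,\cdot)=\beta g$, but the only normalization you invoke, $\theta(b^1,0)+\theta(0,b^2)=\theta(b^1,b^2)=1$, constrains only the sum $\alpha+\beta=1$, not $\alpha$ itself; facetness of $f$ does not by itself exclude $\alpha\neq b^1/(b^1+\sum_i b^2_i)$ with a compensating $\beta$. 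The relative scaling of the two blocks is exactly what must be forced by mixed (off-axis) additivity relations such as $\Psi(x_1,0)+\Psi(0,x_2)=\Psi(x_1,x_2)$ when $[f]_{b^1}$ is additive at $(x_1,[g]_{b^2}(x_2))$ --- i.e., by the propagation step that you explicitly defer. So the part you leave as ``the essential technical content'' is not a finishing touch; it is the theorem. (ii) Your appeal to hypothesis (3) needs the restriction of $\theta$ to a slice to be continuous, since ``unique up to scaling'' is defined by quantifying over continuous nonnegative functions, whereas a minimal valid $\theta$ in Theorem~\ref{thm:facet} need not be continuous; one must first establish enough regularity on the slice (e.g., affineness on intervals via Lemma~\ref{lem:interval-lemma} applied to product sets inside $E(\Psi)$) before that hypothesis can be used. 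In short, the strategy is plausible and consistent with \cite{dey2}, but as a proof it is incomplete precisely at the points where the theorem is difficult.
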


We will prove that $\Pi^n_k$ is a facet of $I_{b\chf_n}$ by showing that $\pi_k$ and $\Phi_{n-1}$ satisfy all the conditions of Theorem~\ref{thm:dey-richard-seq-merge}. We divide these into subsections to help organize our arguments.

%\begin{proof} %We first show that $f\diamond g$ is periodic, and therefore the definition of $f\diamond g$ coincides with the definition in~\cite{dey2}. Since $f\diamond g := \left[\psi \right]^{-1}_{(b_1, b_2)}$ for $\psi(x_1, x_2) =  [f]_{b_1}(x_1 + [g]_{b_2}(x_2))$, it suffices to show that $\psi$ is pseudo-periodic by Proposition~\ref{obs:represent}$(i)$. By Proposition~\ref{obs:represent}(iii), $[f]_{b_1}, [g]_{b_2}$ are both pseudo-periodic. Therefore, $\psi(x_1 + 1, x_2) = [f]_{b_1}(x_1 + 1 + [g]_{b_2}(x_2)) = [f]_{b_1}(x_1 + [g]_{b_2}(x_2)) + 1 = \psi(x_1,x_2)+1$, using pseudo-periodicity of $[f]_{b_1}$. On the other hand, for any unit vector $e^i \in \R^m$, we have $\psi(x_1, x_2 + e^i) = [f]_{b_1}(x_1 + [g]_{b_2}(x_2 + e^i)) = [f]_{b_1}(x_1+ [g]_{b_2}(x_2) + 1) = [f]_{b_1}(x_1 + [g]_{b_2}(x_2)) + 1=\psi(x_1,x_2)+1$, where the second equality uses pseudo-periodicity of $[g]_{b_2}$ and the last equality uses pseudo-periodicity of $[f]_{b_1}$. 
%
%Thus, $f\diamond g$ coincides with the definition in~\cite{dey2} and the result follows from Proposition 7 in~\cite{dey2}.
%\end{proof}
%

\subsection{Minimality of $\pi_k, \Phi_{n-1}$}\label{sec:minimal-bb}

The minimality of $\pi_k$, $k\geq 2$ was established in Section~\ref{sect-minimal}; we concentrate on $\Phi_{n-1}$.
\begin{prop}\label{prop:pik_nondecreasing} Let $b\in [1/2, 1)$. The function $[\pi_k]_b$ is nondecreasing for every $k\geq 2$. 
\end{prop}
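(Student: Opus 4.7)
The plan is to reduce the statement to a slope bound: since $\pi_k$ is a minimal valid function (Propositions~\ref{prop:symmetric}, \ref{prop:subadditive} plus Theorem~\ref{thm:minimalinteger}), it is periodic modulo $\Z$, so the lifting-space formula simplifies to
\[
[\pi_k]_b(x) = x - b\,\pi_k(x), \qquad x \in \R.
\]
Thus on each linear piece, $[\pi_k]_b'(x) = 1 - b\,\pi_k'(x)$, and the function will be nondecreasing on $[0,1]$ as soon as every slope of $\pi_k$ is at most $1/b$. The main task is therefore to verify this slope bound, which crucially relies on the hypothesis $b \in [1/2,1)$.

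For the case $k=2$, $\pi_2$ is the GMI function with slopes $1/b$ and $-1/(1-b)$, so the bound is immediate. For $k\ge 3$, I will use the reflection definition given at the start of Section~\ref{section:seq_merge}: $\pi_k(x) = \tilde{\pi}_k(1-x)$ where $\tilde{\pi}_k$ is the Section~\ref{sect-constr} function for the parameter $\tilde b = 1-b \in (0,1/2]$. Applying Proposition~\ref{prop:slopes} to $\tilde{\pi}_k$ yields slopes $-\frac{1}{b}$ and $\bigl\{\frac{2^{i-2}-(1-b)}{(1-b)b}\bigr\}_{i=2}^{k}$. Since $x \mapsto 1-x$ negates slopes, $\pi_k$ has slopes
\[
\frac{1}{b} \quad \text{and} \quad \left\{-\frac{2^{i-2}-(1-b)}{(1-b)b}\right\}_{i=2}^{k},
\]
and the latter values are strictly negative because $2^{i-2}\ge 1 > 1-b$ for $i \ge 2$. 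Hence $\pi_k$ attains the maximum slope $1/b$ exactly, and the slope bound $\pi_k'(x)\le 1/b$ holds wherever defined.

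Combining the two, $[\pi_k]_b'(x) \ge 0$ on every linear piece. Because $\pi_k$ is continuous and piecewise linear (Propositions~\ref{prop:nonnegativity} and \ref{prop:slopes}), the same holds of $[\pi_k]_b$ on $[0,1]$, and nonnegative slopes together with continuity force $[\pi_k]_b$ to be nondecreasing on $[0,1]$. To extend to all of $\R$, I will invoke pseudo-periodicity of $[\pi_k]_b$: it is immediate from periodicity of $\pi_k$ and the formula above that $[\pi_k]_b(x+z) = [\pi_k]_b(x)+z$ for $z\in \Z$ (this is also Proposition~\ref{obs:represent}(ii)). Continuity at integers is guaranteed by $\pi_k(z)=0$ for all $z\in \Z$, so monotonicity on $[0,1]$ together with the $+1$ jump in pseudo-periods propagates to monotonicity on $\R$.

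I do not anticipate a substantive obstacle; the argument is essentially a one-line derivative computation plus the slope catalog of Proposition~\ref{prop:slopes}. The only point that requires care, and the reason the statement fails for $b \in (0,1/2]$, is the reflection step: under the original construction the large positive slopes $\frac{2^{k-2}-b}{b(1-b)}$ exceed $1/b$, but after reflecting to the regime $b \in [1/2,1)$ those large slopes become negative and only the slope $1/b$ remains positive. Ensuring this reflection is applied correctly (so that the relevant slopes of $\pi_k$ are truly negative) is the single delicate point of the argument.
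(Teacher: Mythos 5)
Your proposal is correct and follows the same route as the paper: reduce nondecreasingness of $[\pi_k]_b(x) = x - b\,\pi_k(x)$ to the slope bound $\pi_k' \le 1/b$, and observe that in the $b \in [1/2,1)$ regime (via reflection of the Section~\ref{sect-constr} construction) the slope $1/b$ is the unique positive slope of $\pi_k$. The paper states this slope fact in one line without unpacking the reflected slope catalog; your explicit computation of those slopes is the same content, spelled out more fully.
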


\begin{proof}
Let $x,y\in \R$ such that $x<y$. %{\red are you implicitly assuming $x, y\in [0,1)$? we should point out that this can be done}. 
By the definition of the lifting-space representation of $\pi_k$ and Remark~\ref{rem:diff-lifting}, we see that
$$[\pi_k]_b(y)-[\pi_k]_b(x) = (y-b\pi_k(y))-(x-b\pi_k(x)) = (y-x) -b(\pi_k(y)-\pi_k(x)).$$
If $[\pi_k]_b(y)< [\pi_k]_b(x)$, then $\frac{1}{b} < \frac{\pi_k(y)-\pi_k(x)}{y-x}$. However, this contradicts that the largest slope (and the only positive slope) in $\pi_k$ is $\frac{1}{b}$ (this crucially uses the fact that we are using $\pi_k$ with $b \in [1/2, 1)$). Thus, $[\pi_k]_b$ is nondecreasing.
\qed
\end{proof}

%

%Let $\Phi_m$ denote $\phi\diamond\left(\phi\diamond\left(...\diamond\phi\right)...\right)$, where there are $m$ copies of $\phi$ in the sequential-merge. In order to use the sequential-merge operator in the definition of $\Phi_m$, we can use Proposition~\ref{prop:preserve_minimal} and show that $\phi$ and $\Phi_{m-1}$ are minimal functions for $R_b(\R, \Z)$ and $R_{b\chf_{m-1}}(\R^{m-1}, \Z^{m-1})$, respectively, and $[\phi]_b$ is nondecreasing. We have shown that $\phi = \pi_k$ is minimal, and Proposition~\ref{prop:pik_nondecreasing} shows that $[\pi]_b$ is nondecreasing. The following result proves the final piece.

\begin{prop}\label{prop:Phi_m_minimal}
Let $b\in [1/2, 1)$. Then $\Phi_m$ is minimal for $I_{b\chf_m}$ for every $m\in \N$.
\end{prop}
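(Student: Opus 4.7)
The plan is to prove this by induction on $m$, leveraging Proposition~\ref{prop:preserve_minimal} together with the monotonicity result from Proposition~\ref{prop:pik_nondecreasing}.

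For the base case $m=1$, we have $\Phi_1 = \phi$, which is the GMI function for $I_b$; this is well known to be minimal (and is also the special case $\pi_2$ of the construction in Section~\ref{sect-constr}, whose minimality was established in Section~\ref{sect-minimal}).

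For the inductive step, assume that $\Phi_{m-1}$ is minimal for $I_{b\chf_{m-1}}$. By the definition of $\Phi_m$, we have $\Phi_m = \phi \diamond \Phi_{m-1}$, where $\phi$ is minimal for $I_b$ and $\Phi_{m-1}$ is minimal for $I_{b\chf_{m-1}}$ by induction. To invoke Proposition~\ref{prop:preserve_minimal} with $f = \phi$, $g = \Phi_{m-1}$, $b^1 = b \in (0,1)$, and $b^2 = b\chf_{m-1} \in [0,1)^{m-1} \setminus \{0\}$ (using that $b \in [1/2, 1)$), it remains only to verify that $[\phi]_b$ is nondecreasing. But this is exactly the content of Proposition~\ref{prop:pik_nondecreasing} applied to $k=2$, since $\phi = \pi_2$. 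Hence Proposition~\ref{prop:preserve_minimal} yields that $\Phi_m = \phi \diamond \Phi_{m-1}$ is minimal for $I_{(b, b\chf_{m-1})} = I_{b\chf_m}$, completing the induction.

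There is no genuine obstacle here: the proof is essentially a bookkeeping exercise once Proposition~\ref{prop:pik_nondecreasing} is in place. The only thing to be careful about is confirming that the hypotheses of Proposition~\ref{prop:preserve_minimal} are applicable at each step of the recursion, namely that the relevant $b^2 = b\chf_{m-1}$ lies in the required domain, and that the nondecreasing hypothesis is only needed on the \emph{first} function $\phi$ of the merge (not on $\Phi_{m-1}$), so that no additional monotonicity property of $\Phi_{m-1}$ needs to be propagated through the induction.
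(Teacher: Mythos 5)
Your proof is correct and takes essentially the same route as the paper: induction on $m$ with base case $\Phi_1 = \phi = \pi_2$, applying Proposition~\ref{prop:preserve_minimal} to $\Phi_m = \phi \diamond \Phi_{m-1}$ and using Proposition~\ref{prop:pik_nondecreasing} (with $k=2$) to supply the needed fact that $[\phi]_b$ is nondecreasing. Your closing observation that the monotonicity hypothesis is only required of the first argument of the merge, so nothing extra needs to be propagated through the induction, is a correct and useful clarification.
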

\begin{proof}
We proceed by induction on $m$. If $m=1$, then $\Phi_m = \phi = \pi_2$ and the result follows from Theorem~\ref{thm:minimalinteger}. So assume that $\Phi_m$ is minimal for $I_{b\chf_m}$ for $m\in \N$, and consider $\Phi_{m+1}$. Note that $\Phi_{m+1} = \phi\diamond \Phi_m$. From Proposition~\ref{prop:pik_nondecreasing}, $[\phi]_b = [\pi_2]_b$ is nondecreasing. Since $\phi$ and $\Phi_m$ are minimal by the induction hypothesis, $\Phi_{m+1}$ is minimal for $I_{b\chf_{m+1}}$ by Proposition~\ref{prop:preserve_minimal}. 
\qed
\end{proof}
%Proposition~\ref{prop:Phi_m_minimal} implies that the function $[\Phi_m]_{b\chf_m}$ is well-defined for every $m\in \N$.  

%%
%We are now ready to define our proposed functions to prove Theorem~\ref{thm:seq_merge}. Let $\Pi_k:\R^n\to \R$ be defined by
%\begin{equation*}
%\Pi_k(x_1, \dots, x_n) := \pi_k\diamond \left(\phi\diamond\left(\phi\diamond\left(...\diamond\phi\right)...\right)\right)(x_1, \dots, x_n),
%\end{equation*}
%where the sequential-merge contains one copy of $\pi_k$ and $n-1$ copies of $\phi$. By Propositions\ref{prop:pik_nondecreasing} and~\ref{prop:preserve_minimal}, the use of the sequential-merge operator is valid in the definition of $\Pi_k$. 
%
%%

\subsection{$\pi_k$ and $\Phi_{n-1}$ are piecewise linear and continuous}\label{sec:pwl-cont}

$\pi_k$ is piecewise linear and continuous by Propositions~\ref{prop:nonnegativity} and \ref{prop:slopes}. We analyze $\Phi_{n-1}$. A nice formula for the sequential-merge procedure was stated in Proposition 5 of~\cite{dey2} and is applied to $\Phi_m$ and $\Pi^n_k$ below. 

\begin{prop}\label{obs:formula} Let $b\in [1/2, 1)$. For $m\in \N$ with $m\geq 2$ and $x\in \R^m$,
\begin{equation*} \Phi_m(x) = \frac{ (m-1)\Phi_{m-1}(x_{-1})+\phi\bigg(\sum_{i=1}^mx_i-(m-1)b\Phi_{m-1}(x_{-1})\bigg)}{m}.
\end{equation*}

For $k\in \N$ with $k\geq 2$ and $x \in \R^n$, \begin{equation*} \Pi^n_k(x) = \frac{ (n-1)\Phi_{n-1}(x_{-1})+\pi_k\bigg(\sum_{i=1}^nx_i-(n-1)b\Phi_{n-1}(x_{-1})\bigg)}{n}.
\end{equation*}

\end{prop}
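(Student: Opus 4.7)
The plan is to derive both identities by a single direct computation that unpacks the definition of the sequential-merge operation and then exploits periodicity to eliminate the floor functions in the lifting-space representation.

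First I would invoke Proposition~\ref{prop:Phi_m_minimal} to see that $\Phi_{m-1}$ is minimal for $I_{b\chf_{m-1}}$, hence periodic modulo $\Z^{m-1}$ by Theorem~\ref{thm:minimalinteger}. Since $\phi$ and $\pi_k$ are also minimal by Section~\ref{sect-minimal} (and hence periodic), Remark~\ref{rem:diff-lifting} applies and gives the simplified formulas
\[
[\Phi_{m-1}]_{b\chf_{m-1}}(x_{-1}) = \sum_{i=2}^m x_i - (m-1)b\,\Phi_{m-1}(x_{-1}), \qquad [\phi]_b(y) = y - b\,\phi(y).
\]

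The heart of the proof is then a short substitution. Writing $y := \sum_{i=1}^m x_i - (m-1)b\,\Phi_{m-1}(x_{-1})$ and $\psi(x_1,x_{-1}) := [\phi]_b\bigl(x_1 + [\Phi_{m-1}]_{b\chf_{m-1}}(x_{-1})\bigr)$, the two identities above collapse to $\psi(x_1,x_{-1}) = y - b\,\phi(y)$. Since $\Phi_m = \phi \diamond \Phi_{m-1} = [\psi]^{-1}_{b\chf_m}$ by definition, applying the group-space representation yields
\[
\Phi_m(x) = \frac{\sum_{i=1}^m x_i - \psi(x_1,x_{-1})}{mb} = \frac{(m-1)b\,\Phi_{m-1}(x_{-1}) + b\,\phi(y)}{mb} = \frac{(m-1)\Phi_{m-1}(x_{-1}) + \phi(y)}{m},
\]
which is exactly the first claimed identity. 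The formula for $\Pi^n_k$ follows from the identical calculation with $\phi$ replaced by $\pi_k$, using that $\pi_k$ is minimal (Section~\ref{sect-minimal}) and $\Phi_{n-1}$ is minimal (Proposition~\ref{prop:Phi_m_minimal}); all that changes is the outer function, and the argument $y = \sum_{i=1}^n x_i - (n-1)b\,\Phi_{n-1}(x_{-1})$ is exactly what one obtains by running the same substitution.

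There is no substantive obstacle here; the derivation is mechanical once periodicity is in place. The only subtlety worth flagging is that we want the identity on all of $\R^m$ rather than only on $[0,1)^m$, which is the domain originally used in~\cite{dey2}. This is precisely the content of Remarks~\ref{rem:diff-lifting} and~\ref{rem:diff-seq-merge}, which verify that our periodic extensions of the lifting-space, group-space, and sequential-merge operations agree with those of Dey and Richard on the fundamental domain and extend periodically, so the displayed formulas hold unambiguously on all of $\R^m$.
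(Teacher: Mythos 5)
Your proof is correct, and it is worth noting that the paper does not actually prove Proposition~\ref{obs:formula} at all: the preceding sentence simply cites Proposition 5 of Dey and Richard~\cite{dey2} and applies it. Your computation therefore supplies a self-contained verification of what the paper only references. The dependency order is respected --- Proposition~\ref{prop:Phi_m_minimal} (giving periodicity of $\Phi_{m-1}$) and the minimality of $\phi$ and $\pi_k$ are all established earlier and do not rely on the formula being proved, so there is no circularity. The algebraic substitution is exactly right: with $y := \sum_{i=1}^m x_i - (m-1)b\,\Phi_{m-1}(x_{-1})$ one has $x_1 + [\Phi_{m-1}]_{b\chf_{m-1}}(x_{-1}) = y$, hence $\psi(x) = [\phi]_b(y) = y - b\,\phi(y)$ by the floor-free form in Remark~\ref{rem:diff-lifting}, and then
\[
\Phi_m(x) = \frac{\sum_{i=1}^m x_i - \psi(x)}{mb} = \frac{(m-1)b\,\Phi_{m-1}(x_{-1}) + b\,\phi(y)}{mb},
\]
which simplifies to the claimed formula; replacing the outer $\phi$ by $\pi_k$ gives the $\Pi^n_k$ identity verbatim. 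The remark you flag about working on $\R^m$ rather than $[0,1)^m$ is precisely the content of Remarks~\ref{rem:diff-lifting} and~\ref{rem:diff-seq-merge}, and your use of them is appropriate.
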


We get the following corollary.

\begin{prop}\label{prop:piecewise_cts}
Let $b\in [1/2, 1)$. For $m\in \N$ with $m\geq 2$, $\Phi_m$ is piecewise linear and continuous. For $k\in \N$ with $k\geq 2$, The function $\Pi^n_k$ is piecewise linear and continuous. 
\end{prop}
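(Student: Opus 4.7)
The plan is to induct on $m \geq 2$ using the explicit recursion from Proposition~\ref{obs:formula}, and then deduce the claim for $\Pi^n_k$ from the analogous formula in the same proposition. The underlying observation is that piecewise linearity and continuity are both preserved under addition, scalar multiplication, projection to a subset of coordinates, and composition.

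For the base case $m = 2$, Proposition~\ref{obs:formula} gives
$$\Phi_2(x_1,x_2) = \tfrac{1}{2}\bigl(\phi(x_2) + \phi(x_1 + x_2 - b\phi(x_2))\bigr).$$
Since $\phi$ is piecewise linear and continuous by its definition in~\eqref{Gom-funct}, and the closure properties listed above apply, $\Phi_2$ is piecewise linear and continuous. For the inductive step, assume $\Phi_{m-1}$ is piecewise linear and continuous. The coordinate projection $x \mapsto x_{-1}$ is linear, so $x \mapsto \Phi_{m-1}(x_{-1})$ is piecewise linear and continuous, and adding the linear function $\sum_{i=1}^m x_i$ with appropriate scaling shows that
$$h(x) := \sum_{i=1}^m x_i - (m-1)b\,\Phi_{m-1}(x_{-1})$$
is piecewise linear and continuous. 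Then $\phi \circ h$ is piecewise linear and continuous, and Proposition~\ref{obs:formula} expresses $\Phi_m$ as
$$\Phi_m(x) = \frac{(m-1)\Phi_{m-1}(x_{-1}) + (\phi \circ h)(x)}{m},$$
which therefore inherits both properties.

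The statement for $\Pi^n_k$ follows by exactly the same argument applied to the second formula in Proposition~\ref{obs:formula}, with $\pi_k$ in place of $\phi$: $\pi_k$ is piecewise linear and continuous by Propositions~\ref{prop:nonnegativity} and \ref{prop:slopes}, and $\Phi_{n-1}$ has just been established to be so.

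The only mildly technical step is justifying that the composition of a piecewise linear continuous function $\phi: \R \to \R$ with a piecewise linear continuous function $h: \R^m \to \R$ is itself piecewise linear. I would handle this by taking the regular polyhedral complex $\{P_j\}$ underlying $h$ and the breakpoints $a_0 < a_1 < \cdots$ of $\phi$: on each $P_j$ the restriction $h|_{P_j}$ is affine, so each set $P_j \cap h^{-1}([a_i, a_{i+1}])$ is a polyhedron, and together these form a regular polyhedral complex refining $\{P_j\}$ on which $\phi \circ h$ is affine piece by piece. Continuity of $\phi \circ h$ is immediate from continuity of $\phi$ and $h$.
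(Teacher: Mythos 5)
Your proof is correct and follows essentially the same route as the paper: induct on $m$ using the formula from Proposition~\ref{obs:formula} and appeal to closure of piecewise linear continuous functions under composition (and the other elementary operations). You simply spell out the details and the base case that the paper's one-line proof leaves implicit.
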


\begin{proof}
By Proposition~\ref{prop:slopes}, $\pi_k$ and $\phi$ are piecewise linear functions. By Proposition~\ref{obs:formula} and since piecewise linear continuous functions are preserved under composition, the result follows by induction.\qed
\end{proof}

\subsection{$[\pi_k]_b$ and $[\Phi_{n-1}]_{b\mathbf{1}_{n-1}}$ are nondecreasing}\label{sec:nondecreasing}

%We will prove that $\Pi_k$ is a facet for $R_{b1_n}(\R^n, \Z^n)$ by verifying that the above hypotheses hold for $\pi_k$ and $\Phi_{n-1}$ in the following props.

For $k\in \N$ with $k\geq 2$, $[\pi_k]_b$ is nondecreasing by Proposition~\ref{prop:pik_nondecreasing}. We analyze $[\Phi_{m}]_{b\mathbf{1}_m}$.

\begin{prop}\label{prop:nondecreasing} Let $b\in [1/2, 1)$. Then $[\Phi_{m}]_{b\mathbf{1}_m}$ is nondecreasing for every $m\in \N$. %{\red OMIT? Specifically,  $[\Phi_{n-1}]_{b\mathbf{1}_{n-1}}$ is nondecreasing.}
\end{prop}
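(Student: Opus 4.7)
My plan is to prove this by induction on $m$, with the key step being the derivation of a clean recursive formula for $[\Phi_m]_{b\mathbf{1}_m}$ in terms of $[\Phi_{m-1}]_{b\mathbf{1}_{m-1}}$ and $[\phi]_b$. The base case $m = 1$ is immediate since $\Phi_1 = \phi = \pi_2$, and $[\pi_2]_b$ is nondecreasing by Proposition~\ref{prop:pik_nondecreasing}.

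For the inductive step, I would exploit the structure of the sequential-merge operation. By definition, $\Phi_m = \phi \diamond \Phi_{m-1} = [\psi]^{-1}_{b\mathbf{1}_m}$ where $\psi(x_1, x_{-1}) = [\phi]_b(x_1 + [\Phi_{m-1}]_{b\mathbf{1}_{m-1}}(x_{-1}))$. By Proposition~\ref{prop:Phi_m_minimal} the function $\Phi_{m-1}$ is minimal, hence by Proposition~\ref{obs:represent}$(ii)$ the map $[\Phi_{m-1}]_{b\mathbf{1}_{m-1}}$ is pseudo-periodic; similarly for $[\phi]_b$. A short computation shows $\psi$ is then pseudo-periodic, so Proposition~\ref{obs:represent}$(iii)$ yields
\begin{equation*}
[\Phi_m]_{b\mathbf{1}_m}(x_1, x_{-1}) = [[\psi]^{-1}_{b\mathbf{1}_m}]_{b\mathbf{1}_m}(x_1, x_{-1}) = \psi(x_1, x_{-1}) = [\phi]_b\bigl(x_1 + [\Phi_{m-1}]_{b\mathbf{1}_{m-1}}(x_{-1})\bigr).
\end{equation*}
(As a sanity check, this can also be verified directly from the explicit formula in Proposition~\ref{obs:formula} and the periodicity of $\Phi_m$, by substituting $mb\Phi_m(x) = (m-1)b\Phi_{m-1}(x_{-1}) + b\phi(\sum_i x_i - (m-1)b\Phi_{m-1}(x_{-1}))$ into $\sum_i x_i - mb\Phi_m(x)$.)

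Given this identity, monotonicity follows easily: if $x \leq y$ in $\mathbb{R}^m$, then $x_1 \leq y_1$ and $x_{-1} \leq y_{-1}$, so the inductive hypothesis gives $[\Phi_{m-1}]_{b\mathbf{1}_{m-1}}(x_{-1}) \leq [\Phi_{m-1}]_{b\mathbf{1}_{m-1}}(y_{-1})$, and hence $x_1 + [\Phi_{m-1}]_{b\mathbf{1}_{m-1}}(x_{-1}) \leq y_1 + [\Phi_{m-1}]_{b\mathbf{1}_{m-1}}(y_{-1})$. Applying the monotonicity of $[\phi]_b$ (from Proposition~\ref{prop:pik_nondecreasing}) to both sides yields $[\Phi_m]_{b\mathbf{1}_m}(x) \leq [\Phi_m]_{b\mathbf{1}_m}(y)$, completing the induction.

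The main obstacle is really just bookkeeping: establishing the recursive identity cleanly. Once that is in place, the monotonicity propagates trivially because a composition of nondecreasing functions of monotone arguments is nondecreasing. The crucial ingredient that was already set up earlier in the paper is Proposition~\ref{prop:pik_nondecreasing}, which relies on $b \in [1/2,1)$ so that the GMI function has its unique positive slope equal to $1/b$.
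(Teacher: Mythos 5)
Your proof is correct and matches the paper's argument essentially step for step: the same induction on $m$, the same use of Proposition~\ref{obs:represent}$(iii)$ (via pseudo-periodicity of $\psi$) to obtain the identity $[\Phi_m]_{b\mathbf{1}_m}(x_1, x_{-1}) = [\phi]_b\bigl(x_1 + [\Phi_{m-1}]_{b\mathbf{1}_{m-1}}(x_{-1})\bigr)$, and the same observation that composing nondecreasing functions yields the inequality. The only cosmetic difference is that the paper cites Remark~\ref{rem:diff-seq-merge} for the pseudo-periodicity of $\psi$ rather than re-deriving it.
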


\begin{proof}
We prove it by induction on $m$. For $m=1$, since $\phi=\pi_2$, it follows that $[\phi]_b$ is nondecreasing. Assume that $[\Phi_{m-1}]_{b\mathbf{1}_{m-1}}$ is nondecreasing and consider $[\Phi_{m}]_{b\mathbf{1}_m}$. Let $(x^1, x^2), (y^1, y^2)\in \R\times \R^{m-1}$ be such that $(x^1, x^2)\leq (y^1, y^2)$. Recall that $\Phi_m = \phi \diamond \Phi_{m-1} := [\psi]^{-1}_{(b,b\mathbf{1}_{m-1})}$ where $\psi(z_1, z_2) = [\phi]_b(z_1+[\Phi_{m-1}]_{b\mathbf{1}_{m-1}}(z_2))$. As shown in Remark~\ref{rem:diff-seq-merge}, $\psi$ is pseudo-periodic since $\phi$ and $\Phi_{m-1}$ are minimal by Proposition~\ref{prop:Phi_m_minimal}. Therefore, applying Proposition~\ref{obs:represent} $(iii)$,
\begin{align*}
[\Phi_{m}]_{b\mathbf{1}_m}(x_1, x_2) & = [\phi]_b(x_1+[\Phi_{m-1}]_{b\mathbf{1}_{m-1}}(x_2)) \\
& \leq [\phi]_b(y_1+[\Phi_{m-1}]_{b\mathbf{1}_{m-1}}(y_2)) && \\%\text{since }[\phi]_b, [\Phi_{m-1}]_{b\mathbf{1}_{m-1}}\text{ are nondecreasing}\\
& = [\Phi_{m}]_{b\mathbf{1}_m}(y_1, y_2),
\end{align*}
where the inequality holds because $[\phi]_b, [\Phi_{m-1}]_{b\mathbf{1}_{m-1}}$ are nondecreasing. Thus $[\Phi_{m}]_{b\mathbf{1}_m}$ is nondecreasing.\qed

%Applying the definition of $\Phi_m = \phi \diamond \Phi_{m-1}$, we see that
%\begin{align*}
%& [\Phi_{m}]_{b\mathbf{1}_m}(x^1, x^2)\\
%= &x^1+\sum_{i=1}^{m-1}x^2_i-mb\Phi_m(x^1, x^2)\\
%%= &x^1+\sum_{i=1}^{m-1}x^2_i-mb\left(\bigg[[\phi]_b({\color{red}z^1 +[\Phi_{m-1}]_{b\chf_{m-1}}(\z^2)}\bigg]_{b\chf_m}^{-1}(x^1, x^2)\right)\\
%= &x^1+\sum_{i=1}^{m-1}x^2_i-mb\left(\frac{x^1+\sum_{i=1}^{m-1}(x^2_i)-[\phi]_b\bigg(x^1 +[\Phi_{m-1}]_{b\chf_{m-1}}(x^2)\bigg)}{mb}\right)\\
%= &[\phi]_b\bigg(x^1 +[\Phi_{m-1}]_{b\chf_{m-1}}(x^2)\bigg)\\
%\leq &[\phi]_b\bigg(y^1-\lfloor y^1\rfloor +[\Phi_{m-1}]_{b\chf_{m-1}}(\y^2)\bigg),
%\end{align*}
%{\color{red}(JOE: Is the red expression above too confusing? It is supposed to refer to the function $\psi$ defined in the definition of the sequential-merge.)}where the inequality follows since $[\phi]_b$ and $[\Phi_{m-1}]_{b\mathbf{1}_{m-1}}$ are nondecreasing and $x\leq \y$. Using the same substitutions as in the latter string of expressions, we see that $[\phi]_b\bigg(y^1-\lfloor y^1\rfloor +[\Phi_{m-1}]_{b\chf_{m-1}}(\y^2-\lfloor \y^2\rfloor)\bigg) = [\Phi_{m}]_{b\mathbf{1}_m}(y_1, y_2)$. Thus, $[\Phi_{m}]_{b\mathbf{1}_m}$ is nondecreasing.
%{\color{red}JOE: I believe that this proof is the only proof where our definitions of sequneital-merge/lifting-space/group-space representations come into play.}\qed
\end{proof}

\subsection{$E(\pi_k)$ and $E(\Phi_{m})$ are unique up to scaling}\label{sec:unique-upto-scaling}

\begin{prop}\label{prop:equal_sets} Let $b\in [1/2, 1)$. For $m\in \N$, the sets $E(\pi_k)$ and $E(\Phi_{m})$ are unique up to scaling. 
\end{prop}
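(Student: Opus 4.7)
The plan is to reduce both claims to facetness of the relevant functions, and then invoke Remark~\ref{remark:GMI_upts}, which records that every extreme function is unique up to scaling. Since every facet is extreme, it suffices to show that $\pi_k$ and $\Phi_m$ are facets for their respective infinite group relaxations.

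For $\pi_k$, the argument is already complete: Proposition~\ref{prop:pi_k_facet} establishes that $\pi_k$ is a facet for $I_b$, hence extreme, and therefore $E(\pi_k)$ is unique up to scaling by Remark~\ref{remark:GMI_upts}. No further work is required here.

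For $\Phi_m$, I would proceed by induction on $m$, showing simultaneously that $\Phi_m$ is a facet of $I_{b\mathbf{1}_m}$ and that $E(\Phi_m)$ is unique up to scaling. The base case $m = 1$ is immediate: $\Phi_1 = \phi$ is a continuous 2-slope minimal function, hence a facet by Theorem~\ref{thm:2-slope}, and uniqueness up to scaling follows from Remark~\ref{remark:GMI_upts}. For the inductive step with $m \geq 2$, write $\Phi_m = \phi \diamond \Phi_{m-1}$ and apply Theorem~\ref{thm:dey-richard-seq-merge} to the pair $(f,g) = (\phi, \Phi_{m-1})$. The four hypotheses are verified as follows: piecewise linearity and continuity come from Proposition~\ref{prop:piecewise_cts} (and from the explicit definition of $\phi$); the nondecreasing condition on $[\phi]_b$ and $[\Phi_{m-1}]_{b\mathbf{1}_{m-1}}$ is supplied by Propositions~\ref{prop:pik_nondecreasing} (with $k = 2$) and~\ref{prop:nondecreasing}; uniqueness up to scaling of $E(\phi)$ and $E(\Phi_{m-1})$ follows from Remark~\ref{remark:GMI_upts} combined with the inductive hypothesis; and facetness of $\phi$ and $\Phi_{m-1}$ follows from Theorem~\ref{thm:2-slope} and the inductive hypothesis respectively. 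Theorem~\ref{thm:dey-richard-seq-merge} then yields that $\Phi_m$ is a facet of $I_{b\mathbf{1}_m}$, and Remark~\ref{remark:GMI_upts} gives that $E(\Phi_m)$ is unique up to scaling, closing the induction.

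The argument is essentially bookkeeping, as all the substantive technical work has been done in the preceding subsections. The one subtlety worth flagging is that ``unique up to scaling'' is a condition on \emph{all} continuous nonnegative $\theta$ with $E(\pi) \subseteq E(\theta)$, not merely on minimal such $\theta$; this is why the argument is routed through the extreme $\Rightarrow$ unique-up-to-scaling fact recorded in Remark~\ref{remark:GMI_upts} rather than attempted directly via the Facet Theorem, which only admits minimal test functions. The main point requiring care is sequencing the induction correctly so that the inductive hypothesis delivers \emph{both} facetness and uniqueness up to scaling of $\Phi_{m-1}$ before Theorem~\ref{thm:dey-richard-seq-merge} is invoked.
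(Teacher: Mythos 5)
Your proof rests on the implication ``extreme $\Rightarrow$ unique up to scaling,'' which you take from Remark~\ref{remark:GMI_upts}. There is a genuine gap here: although the remark records that Dey and Richard assert this claim, the paper's authors visibly do not rely on it in full generality --- they invoke it only for the GMI function $\phi=\pi_2$, and for $k\geq 3$ they verify uniqueness up to scaling \emph{directly}. The caution is warranted: ``unique up to scaling'' quantifies over \emph{all} continuous nonnegative $\theta$ with $E(\pi)\subseteq E(\theta)$, whereas extremality and facetness compare $\pi$ only against valid (hence subadditive) functions; the containment $E(\pi)\subseteq E(\theta)$ forces $\theta(0)=0$, periodicity, and the scaled symmetry relation, but it does \emph{not} force subadditivity, so there is no clean reduction to the extreme/facet hypothesis. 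Moreover, the footnote to Theorem~\ref{thm:dey-richard-seq-merge} warns that Dey--Richard's ``facet'' corresponds to this paper's ``weak facet,'' so the Dey--Richard assertion cited in Remark~\ref{remark:GMI_upts} may not transfer to the paper's definitions without further work. You flagged this subtlety yourself but then resolved it by leaning on the very implication at issue.

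The resolution the paper uses is worth noting, because you appear to have missed that it is already available. Propositions~\ref{claim:Ik6_general}, \ref{claim:i33_general}, \ref{claim:ik2_induction_general}, \ref{claim:Ij1_induction_general} and Lemma~\ref{lem:main-lem} are deliberately phrased for an arbitrary \emph{locally bounded} $\theta$ with $E(\pi)\subseteq E(\theta)$, not just for a minimal valid $\theta$ --- so the Facet Theorem's restriction to minimal test functions is not a real obstruction. For $k\geq 3$ the paper takes an arbitrary continuous $\xi$ with $E(\pi_k)\subseteq E(\xi)$, disposes of the degenerate case $\xi(b)=0$, normalizes to $\tilde\xi=\xi(-\cdot)/\xi(b)$ (continuous, periodic, hence locally bounded, with $\tilde\xi(0)=0$ and $\tilde\xi(b')=1$), and then reruns exactly the same machinery used in the facet proof to force $\tilde\xi=\tilde\pi_k$ everywhere. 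For $\Phi_m$, rather than deducing uniqueness from facetness, the paper observes that Dey--Richard's proof of Theorem~\ref{thm:dey-richard-seq-merge} itself shows that $E(f\diamond g)$ is unique up to scaling whenever $E(f)$ and $E(g)$ are, and then inducts with base case $\phi$. Your simultaneous induction for $\Phi_m$ is the right bookkeeping to avoid circularity, but its final link (``$\Phi_m$ is a facet, hence $E(\Phi_m)$ is unique up to scaling'') relies on the same contested general implication. To close the gap you should either justify ``facet $\Rightarrow$ unique up to scaling'' under the paper's definitions, or, better, follow the paper and argue directly with the general-$\theta$ lemmas and the preservation-under-merge fact.
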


\begin{proof} 
First, we consider $\pi_k$. If $k=2$, then by Remark~\ref{remark:GMI_upts} we have that $E(\pi_k)$ is unique up to scaling. So let $k \geq 3$ and let $\xi:\R\to \R_+$ be a continuous function such that $E(\pi_k)\subseteq E(\xi) $. We claim that $\xi = \xi(b)\pi_k$.

If $\xi(b)=0$, then $\xi(x) +\xi(b-x)=0$ for each $x\in \R$ since $E(\xi)\supseteq E(\pi_k)$. As $\xi$ is nonnegative, this implies that $\xi(x)=0$ for each $x\in \R$ and so $\xi = 0 \pi_k$. Now suppose that $\xi(b)\neq 0$. It is sufficient to show that the function defined pointwise by $\tilde{\xi}(x) := \frac{1}{\xi(b)}\xi(-x)$ is equal to the function defined by $\tilde{\pi}_k(x) :=\pi_k(-x)$. Recall that $\tilde{\pi}_k$ is extreme as discussed after Remark~\ref{rem:diff-seq-merge}. 

Observe that $E(\tilde{\pi}_k)\subseteq E(\tilde{\xi})$. Indeed, if $(x,y)\in E(\tilde{\pi}_k)$, then $\tilde{\pi}_k(x+y) = \tilde{\pi}_k(x)+\tilde{\pi}_k(y)$, and $\pi_k(-x-y) = \pi_k(-x)+{\pi}_k(-y)$ then follows from the definition of $\tilde{\pi}_k$. Since $E(\pi_k)\subseteq E(\xi)$, this implies $\tilde{\xi}(x+y) = \frac{1}{\xi(b)}\xi(-x-y) = \frac{1}{\xi(b)}\xi(-x)+\frac{1}{\xi(b)}\xi(-y) = \tilde{\xi}(x)+\tilde{\xi}(y)$. Hence $E(\tilde{\pi}_k)\subseteq E(\tilde{\xi})$. This observation has a few implications. First, it implies $\tilde{\xi}(0)+\tilde{\xi}(0) = \tilde{\xi}(0)$ and so $\tilde{\xi}(0)=0$, and also that $\tilde{xi}(b)=1$. Next, since $\tilde{\pi}_k$ is periodic, Proposition~\ref{obs:periodicity} implies that $\tilde \xi$ is periodic. Finally, since $\tilde{\xi}$ is continuous and periodic, it is locally bounded.

Using $\pi = \tilde{\pi}_k$ and $\theta = \tilde \xi$ in Lemma~\ref{lem:main-lem}, it follows that $\tilde\xi = \tilde{\pi}_k$ on $I^k_3\cup I^k_6$. From Proposition~\ref{claim:ik2_induction_general} and again setting $\pi = \tilde{\pi}_k$ and $\theta = \tilde\xi$, we obtain that $\tilde\xi = \tilde{\pi}_k$ on $I_2^k\cup I_4^k$. It is left to show that $\tilde\xi = \tilde{\pi}_k$ on $I^k_1$ and $I^k_5$.

Let $U =  \left[0,\frac{b}{2}\left(\frac{1}{8}\right)^{k-2}\right]$ and observe that $U+U  = \left[0, b\left(\frac{1}{8}\right)^{k-2}\right]=I^k_1$. It follows from the definition of $\tilde{\pi}_k$ that $\tilde{\pi}_k(x)+\tilde{\pi}_k(y)=\tilde{\pi}_k(x+y)$ for all $x,y, x+y\in I^k_1$, so $U\times U \subseteq E(\tilde{\pi}_k)\subseteq E(\tilde\xi)$. Recall that $\tilde\xi(0) = \tilde{\pi}_k(0)=0$. Also, since $\tilde\xi = \tilde{\pi}_k$ on $I_2^k$, $\tilde\xi\left(b\left(\frac{1}{8}\right)^{k-2}\right) = \tilde{\pi}_k\left(b\left(\frac{1}{8}\right)^{k-2}\right)$. Thus $\tilde\xi = \tilde{\pi}_k$ on the endpoints of $I^k_1$. Moreover, Lemma~\ref{lem:interval-lemma} implies that $\tilde\xi$ is affine over $I_1^k$. Since $\tilde{\pi}_k$ is also affine over $I_1^k$ and $\tilde\xi = \tilde{\pi}_k$ at the endpoints, we have $\tilde\xi = \tilde{\pi}_k$ on $I_1^k$. The fact that $\tilde\xi = \tilde{\pi}_k$ on $I^k_5$ follows by symmetry (note that $\tilde\xi$ is also symmetric because $E(\tilde{\pi}_k) \subseteq E(\tilde\xi)$). Therefore $\tilde\xi = \tilde{\pi}_k$ everywhere.

\smallskip
Now consider $\Phi_m$. Dey and Richard's proof of Theorem~\ref{thm:dey-richard-seq-merge} shows that if $E(f)$ and $E(g)$ are unique up to scaling, then $E(f\diamond g)$ is also unique up to scaling. If $m=1$, then $\Phi_m = \phi$. Since $\phi=\pi_2$, the set $E(\phi)$ is unique up to scaling by Remark~\ref{remark:GMI_upts}. Now an induction argument shows that $E(\Phi_m)$ is unique up to scaling.\qed
%
%So suppose that $E(\Phi_{m-1})$ is unique up to scaling and consider $E(\Phi_m)$. Let $\Theta:\R^m\to \R_+$ be continuous such that $E(\Theta)\supseteq E(\Phi_m)$. We claim that $\Theta= \Theta(b1_m)\Phi_m$.
%
%If $\Theta(b1_m)=0$, then $\Theta(x)=0$ for all $x\in \R^m$ since $\Phi(x)+\Phi(b1_m-x) = \Phi(b1_m)=0$ and $\Phi$ is nonnegative. So assume that $\Theta(b1_m)>0$. It is sufficient to show that $\tilde{\Theta} := \frac{1}{\Theta(b1_m)}\Theta = \Phi_m$. Indeed, one can show this equality holds by {\red emulating????????? the referee will kill us}  the proofs of Theorem 5 and Proposition 11 in ~\cite{dey2} (note that Proposition 11 in ~\cite{dey2} requires validity of $\Theta$. However, the proof holds if $\Theta$ is assumed to be continuous). 
\end{proof}

\subsection{The proof of Theorem~\ref{thm:seq_merge}}

\begin{prop}\label{prop:PhiFacet} Let $b\in [1/2, 1)$. For $m\in \N$, the function $\Phi_m$ is a facet for $I_{b1_m}$. %{\red OMIT Specifically, $\Phi_{n-1}$ is a facet for $R_{b1_{n-1}}(\R^{n-1}, \Z^{n-1})$}.
\end{prop}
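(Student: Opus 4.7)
The plan is to proceed by induction on $m$, using Theorem~\ref{thm:dey-richard-seq-merge} as the engine for the inductive step. For the base case $m=1$, we have $\Phi_1 = \phi = \pi_2$, which is a facet for $I_b$ by Theorem~\ref{thm:2-slope} (since the Gomory mixed-integer function is a continuous, piecewise linear, minimal valid function with two slopes).

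For the inductive step, assume $\Phi_m$ is a facet for $I_{b\mathbf{1}_m}$ and consider $\Phi_{m+1} = \phi \diamond \Phi_m$. The aim is to apply Theorem~\ref{thm:dey-richard-seq-merge} with $f = \phi$ (a valid function for $I_b$) and $g = \Phi_m$ (a valid function for $I_{b\mathbf{1}_m}$). To do so, I need to verify each of the four hypotheses of that theorem. First, both $\phi$ and $\Phi_m$ are minimal: minimality of $\phi = \pi_2$ follows from Theorem~\ref{thm:minimalinteger} (or directly from the fact that the GMI is a facet), and minimality of $\Phi_m$ is exactly Proposition~\ref{prop:Phi_m_minimal}. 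Second, $\phi$ is piecewise linear and continuous by inspection, and $\Phi_m$ is piecewise linear and continuous by Proposition~\ref{prop:piecewise_cts}. Third, $[\phi]_b = [\pi_2]_b$ is nondecreasing by Proposition~\ref{prop:pik_nondecreasing}, and $[\Phi_m]_{b\mathbf{1}_m}$ is nondecreasing by Proposition~\ref{prop:nondecreasing}. Fourth, $E(\phi)$ and $E(\Phi_m)$ are unique up to scaling by Proposition~\ref{prop:equal_sets} (with Remark~\ref{remark:GMI_upts} covering $\phi$). Finally, facetness of the two ingredients: $\phi$ is a facet by Theorem~\ref{thm:2-slope}, and $\Phi_m$ is a facet by the induction hypothesis.

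With all four hypotheses verified, Theorem~\ref{thm:dey-richard-seq-merge} yields that $\Phi_{m+1} = \phi \diamond \Phi_m$ is a facet for $I_{(b, b\mathbf{1}_m)} = I_{b\mathbf{1}_{m+1}}$, completing the induction.

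Since each required property of $\phi$ and $\Phi_m$ has already been established in the preceding subsections, there is no genuine obstacle in this argument; the work of the section has been arranged precisely so that Theorem~\ref{thm:dey-richard-seq-merge} applies at each stage. The only mild subtlety is that the inductive hypothesis ``$\Phi_m$ is a facet'' enters as hypothesis~(4) for the next step, which is exactly why the induction closes on itself cleanly — each of the hypotheses (1)--(3) for $\Phi_m$ is an unconditional statement proven earlier in this section, independent of facetness, so no circularity arises.
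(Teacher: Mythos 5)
Your proof is correct and follows essentially the same route as the paper: induction on $m$ with base case $\Phi_1 = \phi = \pi_2$ (a facet via Theorem~\ref{thm:2-slope}), and inductive step via Theorem~\ref{thm:dey-richard-seq-merge} applied to $\phi \diamond \Phi_m$, with hypotheses verified by Propositions~\ref{prop:Phi_m_minimal}, \ref{prop:piecewise_cts}, \ref{prop:nondecreasing}, and \ref{prop:equal_sets}. The paper's proof is terser but invokes exactly the same propositions; your writeup merely makes the inductive structure and the non-circularity remark explicit.
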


\begin{proof}
Using induction, the result is a consequence of Theorem~\ref{thm:dey-richard-seq-merge}; the assumptions of Theorem~\ref{thm:dey-richard-seq-merge} are verified by the results of Propositions \ref{prop:Phi_m_minimal}, \ref{prop:piecewise_cts}, \ref{prop:nondecreasing} and \ref{prop:equal_sets}.\qed
\end{proof}

The next few propositions argue that $\Pi^n_k$ is genuinely $n$ dimensional with at least $k$ slopes. Note that, unlike the one dimensional setting in which exactly $k$ slopes are attained, we are unsure of exactly how many slopes $\Pi^n_k$ attains; all we can establish is that the number of slopes is greater than or equal to $k$.
\begin{prop}\label{prop:0_on_integers}
Let $b\in [1/2, 1)$ and $m\in \N$. Then $\Phi_m(x)=0$ if and only if $x\in \Z^m$. Also, for every $n, k \in \N$ such that $n, k \geq 2$, $\Pi^n_k(x)=0$ if and only if $x\in \Z^n$. 
\end{prop}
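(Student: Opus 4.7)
The \emph{if} direction is a free consequence of minimality. By Proposition~\ref{prop:Phi_m_minimal}, each $\Phi_m$ is a minimal valid function for $I_{b\chf_m}$. Combining the minimality of $\pi_k$ (Propositions~\ref{prop:symmetric} and~\ref{prop:subadditive}) with the fact that $[\pi_k]_b$ is nondecreasing (Proposition~\ref{prop:pik_nondecreasing}), Proposition~\ref{prop:preserve_minimal} shows that $\Pi^n_k = \pi_k\diamond\Phi_{n-1}$ is a minimal valid function for $I_{b\chf_n}$. Theorem~\ref{thm:minimalinteger} then gives $\Phi_m(z)=0$ for every $z\in\Z^m$ and $\Pi^n_k(z)=0$ for every $z\in\Z^n$.

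For the \emph{only if} direction I would induct on $m$. The base case $m=1$ gives $\Phi_1 = \phi = \pi_2$, and Proposition~\ref{prop:nonnegativity} applied with $k=2$ yields $\Phi_1(x)=0\Rightarrow x\in\Z$. For $m\ge 2$, I would invoke the explicit formula from Proposition~\ref{obs:formula},
\[
\Phi_m(x) = \frac{(m-1)\Phi_{m-1}(x_{-1}) + \phi\bigl(\textstyle\sum_{i=1}^{m} x_i - (m-1)b\,\Phi_{m-1}(x_{-1})\bigr)}{m}.
\]
Because $\Phi_{m-1}$ and $\phi$ are nonnegative (they are minimal valid functions), $\Phi_m(x)=0$ forces both summands in the numerator to vanish. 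Hence $\Phi_{m-1}(x_{-1})=0$, so by induction $x_{-1}\in\Z^{m-1}$. Substituting this back gives $\phi\bigl(\sum_{i=1}^m x_i\bigr)=0$, and Proposition~\ref{prop:nonnegativity} then forces $\sum_{i=1}^m x_i\in\Z$. Together these yield $x_1\in\Z$ and hence $x\in\Z^m$.

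The statement for $\Pi^n_k$ follows by the same argument applied to the second identity in Proposition~\ref{obs:formula}: if $\Pi^n_k(x)=0$, then nonnegativity of $\Phi_{n-1}$ (Proposition~\ref{prop:Phi_m_minimal}) and of $\pi_k$ (Proposition~\ref{prop:nonnegativity}) force $\Phi_{n-1}(x_{-1})=0$ and $\pi_k\bigl(\sum_{i=1}^n x_i\bigr)=0$. The first gives $x_{-1}\in\Z^{n-1}$ by the $\Phi_m$ statement just proved, while the second gives $\sum_{i=1}^n x_i\in\Z$ by Proposition~\ref{prop:nonnegativity}, so $x_1\in\Z$ and $x\in\Z^n$.

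There is no serious obstacle here; the argument is essentially bookkeeping on top of Proposition~\ref{obs:formula}. The one detail worth noting is that Proposition~\ref{obs:formula} is stated only for $m\ge 2$, which is why $m=1$ needs the separate base case handled via Proposition~\ref{prop:nonnegativity}. The entire argument rests on the elementary fact that a sum of finitely many nonnegative reals vanishes iff every summand vanishes.
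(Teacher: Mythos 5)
Your proof is correct and takes essentially the same route as the paper: induct on $m$, handle the base case $m=1$ via the GMI function, and use the explicit formula from Proposition~\ref{obs:formula} in the induction step. The only stylistic differences are that (a) you derive the ``if'' direction abstractly from minimality and Theorem~\ref{thm:minimalinteger} rather than reading it off the formula, and (b) for the ``only if'' direction you argue forward (a sum of nonnegative terms vanishes only if each term vanishes) rather than by the paper's contrapositive case split on whether $x_{-1}\in\Z^m$; both are sound and amount to the same computation.
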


\begin{proof}
We use induction on $m$. If $m=1$, then $\Phi_m =\phi$ and the result follows from~\eqref{Gom-funct}. So assume that $\Phi_t(x)=0$ if and only if $x\in \Z^t$ for all $t\leq m$ with $m,t\in \N$. Using the formulas in Proposition~\ref{obs:formula} and the induction hypothesis, it directly follows that $\Phi_{m+1}(x)=0$ for all $x\in \Z^{m+1}$. Let $x\in \R^{m+1}\setminus \Z^{m+1}$. By the induction hypothesis, if $x_{-1}\not\in \Z^m$, then $\Phi_m(x_{-1})>0$, and since $\phi$ is nonnegative, $\Phi_{m+1}(x)>0$ follows from Proposition~\ref{obs:formula}. If $x_{-1}\in \Z^m$, then $\Phi_m(x_{-1})=0$ by the induction hypothesis, and $\sum_{i=1}^{m+1}x_i-mb\Phi_m(x_{-1}) = \sum_{i=1}^{m+1}x_i\not\in \Z$. Again using the induction hypothesis, $\phi(\sum_{i=1}^{m+1}x_i-mb\Phi_m(x_{-1}) ) >0$ and so $\Phi_{m+1}(x)>0$ using the formula in Proposition~\ref{obs:formula}. 

For $\Pi^n_k$ the result follows by applying the same argument as above and noting that $\pi_k(x)=0$ if and only if $x\in \Z$; see Proposition~\ref{prop:nonnegativity}.
\qed
\end{proof}

\begin{prop}\label{prop:genuine} Let $b\in [1/2, 1)$. The function $\Pi^n_k$ is genuinely $n$ dimensional for every $n,k \in \N$ such that $n,k \geq 2$.
\end{prop}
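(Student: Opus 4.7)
My plan is a short proof by contradiction that leverages the zero-set characterization from Proposition~\ref{prop:0_on_integers}. Suppose towards a contradiction that $\Pi^n_k = \theta' \circ T$ for some linear map $T: \R^n \to \R^{n-1}$ and some function $\theta': \R^{n-1} \to \R$. Since $T$ is linear and its codomain has dimension $n-1 < n$, its kernel is at least one-dimensional; fix a nonzero vector $v \in \R^n$ with $T(v) = 0$.

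The idea is that $\Pi^n_k$ would then be constant along the line $\R v$ through the origin. Concretely, for every $t \in \R$ we have $T(tv) = 0 = T(0)$, hence
\begin{equation*}
\Pi^n_k(tv) = \theta'(T(tv)) = \theta'(T(0)) = \Pi^n_k(0).
\end{equation*}
Since $0 \in \Z^n$, Proposition~\ref{prop:0_on_integers} gives $\Pi^n_k(0) = 0$, so $\Pi^n_k(tv) = 0$ for every $t \in \R$. Applying the converse direction of Proposition~\ref{prop:0_on_integers}, this forces $tv \in \Z^n$ for every $t \in \R$. But $v$ has at least one nonzero coordinate $v_i$, and then $tv_i$ would have to be an integer for every real $t$, which is absurd. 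This contradiction shows that no such factorization $\Pi^n_k = \theta' \circ T$ exists, so $\Pi^n_k$ is genuinely $n$-dimensional.

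I do not expect any real obstacle here: the only substantive input is the ``if and only if'' characterization of the zero set of $\Pi^n_k$ already proved in Proposition~\ref{prop:0_on_integers}. No continuity, monotonicity, or facetness of $\Pi^n_k$ is needed for this argument, and the reduction from ``depends on fewer than $n$ linear coordinates'' to ``constant along a nonzero direction through the origin'' is immediate from linearity of $T$.
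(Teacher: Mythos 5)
Your proof is correct and takes essentially the same approach as the paper: both argue by contradiction from a hypothetical factorization $\Pi^n_k = \Psi\circ T$, locate a nonzero vector in $\ker(T)$, and use Proposition~\ref{prop:0_on_integers} to derive the contradiction that a point outside $\Z^n$ would map to $0$. Your version just spells out in a bit more detail why the kernel must contain a point outside $\Z^n$ (by scaling along the line $\R v$), whereas the paper asserts it directly.
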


\begin{proof} Assume to the contrary that $\Pi^n_k$ is not genuinely $n$ dimensional. Then there exist a linear transformation $T:\R^n\to \R^{n-1}$ and a function $\Psi:\R^{n-1}\to \R$ such that $\Pi^n_k = \Psi\circ T$. Since $T$ is linear with nontrivial kernel, there must exist $x \in \ker(T)$ such that $x \not\in\Z^n$. It follows that
$$\Pi^n_k(x) = \Psi\circ T(x) = \Psi(0) = \Psi\circ T(0) = \Pi^n_k(0) = 0.$$
However, Proposition~\ref{prop:0_on_integers} implies that $x\in \Z^n$, which is a contradiction.
%Proposition~\ref{obs:formula} states that
%\begin{equation*}
%0 = \Pi_k(v_1, v_2, \dots, v_n) = \frac{(n-1)\Phi_{n-1}(v_{-1})+\pi_k\bigg(\sum_{i=1}^nv_i-(n-1)b\Phi_{n-1}(v_{-1})\bigg)}{n},
%\end{equation*}
%which implies that
%$$(n-1)\Phi_{n-1}(v_{-1}) = -\pi_k\bigg(\sum_{i=1}^nv_i-(n-1)b\Phi_{n-1}(v_{-1})\bigg).$$
%The left hand side is nonnegative and the right hand side is nonpositive, indicating that both expressions are 0. Since $\Phi_{n-1}$ is only 0 at $\Z^{n-1}$, $(v_{-1}) \in \Z^{n-1}$. Substituting this into the right hand side, we see that $\pi_k(v_1 + z) = 0$ where $z = v_2 + \ldots + v_n \in \Z$, implying that $v_1\in \Z$. Hence $v\in \Z^n$, which is a contradiction. So $\Pi_k$ is genuinely $n$-dimensional.
\qed
\end{proof}

\begin{lemma}\label{lemma:affine_on_b}
Let $b\in [1/2, 1)$ and $m\in \N$. Then $\Phi_m(x) = \frac{1}{mb}\sum_{i=1}^m x_i$ for all $x\in \R^m_+$ with $\|x\|_{\infty}< b$. 
\end{lemma}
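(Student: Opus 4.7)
My plan is to proceed by induction on $m$, using the recursive formula from Proposition~\ref{obs:formula} together with the explicit form of the GMI function $\phi$ on the interval $[0,b)$.

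For the base case $m=1$, we have $\Phi_1 = \phi$. Since $b \in [1/2, 1)$, the definition \eqref{Gom-funct} gives $\phi(x_1) = \frac{1}{b}x_1$ for $x_1 \in [0,b)$, which is exactly $\frac{1}{1 \cdot b}\sum_{i=1}^1 x_i$.

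For the inductive step, assume the claim for $m-1$ and fix $x \in \R^m_+$ with $\|x\|_\infty < b$. Then $x_{-1} \in \R^{m-1}_+$ with $\|x_{-1}\|_\infty < b$, so by the inductive hypothesis $\Phi_{m-1}(x_{-1}) = \frac{1}{(m-1)b}\sum_{i=2}^m x_i$. The key observation is then that
\[
\sum_{i=1}^m x_i - (m-1)b\,\Phi_{m-1}(x_{-1}) \;=\; \sum_{i=1}^m x_i - \sum_{i=2}^m x_i \;=\; x_1,
\]
and since $x_1 \in [0,b)$, the GMI function evaluates as $\phi(x_1) = \frac{1}{b}x_1$. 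Substituting into the formula in Proposition~\ref{obs:formula} gives
\[
\Phi_m(x) \;=\; \frac{(m-1)\cdot \frac{1}{(m-1)b}\sum_{i=2}^m x_i + \frac{1}{b}x_1}{m} \;=\; \frac{1}{mb}\sum_{i=1}^m x_i,
\]
completing the induction.

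There is no real obstacle here: the hypothesis $\|x\|_\infty < b$ is tailored precisely so that the argument of $\phi$ in the recursion collapses to $x_1$, which lies in the linear piece $[0,b)$ of the GMI function, and the recursive coefficient $(m-1)b$ cancels the normalization in the inductive hypothesis. The only thing to double-check is that one should not worry about the rounding $\lfloor \cdot \rfloor$ hidden in the lifting-space representation used to define $\Phi_m$, since by Remark~\ref{rem:diff-lifting} and the minimality of $\phi$ and $\Phi_{m-1}$ (Proposition~\ref{prop:Phi_m_minimal}), the simplified formula of Proposition~\ref{obs:formula} is exactly what applies.
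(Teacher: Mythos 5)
Your proof is correct and follows essentially the same route as the paper: induction on $m$ using the recursive formula of Proposition~\ref{obs:formula}, the linear piece of the GMI on $[0,b)$, and the observation that the argument of $\phi$ collapses to $x_1$ under the inductive hypothesis.
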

\begin{proof}
We proceed by induction on $m$. If $m=1$, then $\Phi_m = \phi$ and the result follows by the definition of the GMI in~\eqref{Gom-funct}. So let $m\geq 2$ and assume that $\Phi_{m-1}(x) = \frac{1}{(m-1)b}\sum_{i=1}^{m-1} x_i$ for all $x\in \R^{m-1}_+$ with $\|x\|_{\infty}< b$.

Let $x\in \R^m_+$ with $\|x\|_{\infty}< b$. Using Proposition~\ref{obs:formula} and the induction hypothesis, we see that
\begin{align*}
\Phi_m(x) & = \frac{ (m-1)\Phi_{m-1}(x_{-1})+\phi\bigg(\sum_{i=1}^mx_i-(m-1)b\Phi_{m-1}(x_{-1})\bigg)}{m}\\
& = \frac{ \frac{1}{b}\sum_{i=2}^mx_i+\phi(x_1)}{m}.
\end{align*}
Since $|x_1|<b$, we can apply the definition of the GMI to the previous equality and see
$$\Phi_m(x) = \frac{ \frac{1}{b}\sum_{i=2}^mx_i+\frac{1}{b}x_1}{m} = \frac{1}{mb}\sum_{i=1}^m x_i,$$
as desired. \qed\end{proof}

\begin{prop}\label{prop:atleastkslopes} Let $b\in [1/2, 1)$. The function $\Pi^n_k$ has at least $k$ slopes for every $n,k \in \N$ such that $n,k \geq 2$.
\end{prop}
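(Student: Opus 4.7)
The plan is to exhibit an open subset of $\R^n$ on which $\Pi^n_k$ collapses to a very simple form, and then read off $k$ distinct gradient vectors directly from the $k$ slopes of $\pi_k$ already established in Proposition~\ref{prop:slopes}.

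First I would take the open set $R := \R \times (0,b)^{n-1} \subseteq \R^n$. For $x \in R$, we have $x_{-1} \in \R^{n-1}_+$ and $\|x_{-1}\|_\infty < b$, so Lemma~\ref{lemma:affine_on_b} applies and gives $\Phi_{n-1}(x_{-1}) = \frac{1}{(n-1)b}\sum_{i=2}^n x_i$. Substituting this into the formula of Proposition~\ref{obs:formula} and noting that the argument of $\pi_k$ reduces to $\sum_{i=1}^n x_i - \sum_{i=2}^n x_i = x_1$, I get
\[
\Pi^n_k(x) = \frac{1}{n}\left(\frac{1}{b}\sum_{i=2}^n x_i + \pi_k(x_1)\right) \quad \text{for all } x \in R.
\]
Thus on $R$, $\Pi^n_k$ is just a fixed affine function of $x_2, \ldots, x_n$ plus $\frac{1}{n}\pi_k(x_1)$.

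Next, since $R$ is open, at any point $(x_1, x_{-1}) \in R$ where $\pi_k$ is differentiable at $x_1$, the function $\Pi^n_k$ is differentiable there with
\[
\nabla \Pi^n_k(x_1, x_{-1}) = \left(\frac{\pi_k'(x_1)}{n},\, \frac{1}{nb},\, \ldots,\, \frac{1}{nb}\right).
\]
By Proposition~\ref{prop:slopes}, $\pi_k'$ takes $k$ distinct values on its domain of differentiability. Choosing $x_1^{(1)}, \ldots, x_1^{(k)} \in \R$ at which $\pi_k'$ realizes these $k$ distinct slopes, and any fixed $x_{-1} \in (0,b)^{n-1}$, yields $k$ distinct gradient vectors for $\Pi^n_k$. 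Hence $\Pi^n_k$ has at least $k$ slopes.

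There is no real obstacle here beyond the bookkeeping of the reduction via Lemma~\ref{lemma:affine_on_b}; once the identity for $\Pi^n_k$ on $R$ is in hand, the slope count transfers immediately from $\pi_k$ to $\Pi^n_k$. Note that this argument gives only the lower bound $k$ on the number of slopes of $\Pi^n_k$ because outside $R$ the function $\Phi_{n-1}$ may contribute additional slopes that we do not control; this is consistent with the remark preceding the proposition that the exact slope count of $\Pi^n_k$ is not claimed.
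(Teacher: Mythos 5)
Your proposal is correct and takes essentially the same approach as the paper: both reduce $\Pi^n_k$ on the region where $x_{-1}\in (0,b)^{n-1}$ to the form $\frac{1}{n}\bigl(\frac{1}{b}\sum_{i=2}^n x_i + \pi_k(x_1)\bigr)$ via Proposition~\ref{obs:formula} and Lemma~\ref{lemma:affine_on_b}, then read off the $k$ distinct gradients $(\frac{\sigma_i}{n},\frac{1}{bn}\chf_{n-1})$ from the $k$ slopes of $\pi_k$. The only cosmetic difference is that the paper explicitly packages this as $k$ full-dimensional regions $R_i$ on which $\Pi^n_k$ is affine, whereas you work pointwise at places of differentiability, which is equivalent.
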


\begin{proof}
By Theorem~\ref{thm:kslopes}, $\pi_k$ has $k$ nondegenerate intervals $J_1, \dots, J_k\subseteq \R$ such that $\pi_k$ is affine over each $J_i$ with slope $\sigma_i \in \R$, i.e., $\pi_k(x) = \sigma_ix + d_i$ for some $d_i \in \R$. Moreover, $\sigma_i\neq \sigma_j$ for $i\neq j$. For each $i=1,\dots, k$, let $R_i\subseteq \R^n$ be defined by 
$$R_i:= \{x\in \R^n: x_1\in J_i, ~x_{-1}\in B_{n-1}\},$$
where $B_{n-1} = \{x\in \R^{n-1}_+: \|x\|_{\infty} < b\}$. We claim that $\Pi^n_k$ is affine over each $R_i$, and attains a different slope on each $R_i$.

In order to see that $\Pi^n_k$ is affine over $R_i$, let $x\in R_i$. By Proposition~\ref{obs:formula}, we have \begin{align*}
\Pi^n_k(x) &= \frac{ (n-1)\Phi_{n-1}(x_{-1})+\pi_k(\sum_{i=1}^nx_i-(n-1)b\Phi_{n-1}(x_{-1}))}{n} && \\%\text{by Proposition~\ref{obs:formula}}\\
& = \frac{\frac{1}{b}\sum_{i=2}^nx_i+\pi_k(x_1)}{n} \quad\qquad \text{by Lemma~\ref{lemma:affine_on_b}}&& \\
& = \frac{1}{bn}\sum_{i=2}^nx_i+\frac{\sigma_ix_1 + d_i}{n} \;\qquad \text{since }x\in R_i && \\
& = \left(\frac{\sigma_i}{n}, \frac{1}{bn}\chf_{n-1}\right)\cdot x + \frac{d_i}{n}.
\end{align*}
Thus, $\Pi^n_k(x)$ is affine over $R_i$ with gradient $(\frac{\sigma_i}{n}, \frac{1}{bn}\chf_{n-1})$. 

Since each $\sigma_i$ is distinct for $i=1,\dots, n$, each gradient $(\frac{\sigma_i}{n}, \frac{1}{bn}\chf_{n-1})$ is distinct. Note that as $R_i$ is full dimensional, this vector is indeed a gradient. Hence, $\Pi^n_k$ has at least $k$ slopes, as desired. \qed\end{proof}

\begin{proof}[of Theorem~\ref{thm:seq_merge}] Since facets are periodic with respect to $\Z^n$, we may assume that $b\in (0,1)$. First, we prove the result for $b\in [1/2, 1)$. Sections~\ref{sec:minimal-bb}, \ref{sec:pwl-cont}, \ref{sec:nondecreasing}, \ref{sec:unique-upto-scaling}, and Propositions~\ref{prop:pi_k_facet} and \ref{prop:PhiFacet} establish that $\pi_k$ and $\Phi_{n-1}$ satisfy the assumptions for Theorem~\ref{thm:dey-richard-seq-merge}. Thus, $\Pi^n_k$ is a facet for $I_{b1_{n}}$. Proposition~\ref{prop:genuine} shows that $\Pi^n_k$ is genuinely $n$ dimensional, and Proposition~\ref{prop:atleastkslopes} shows that $\Pi^n_k$ has at least $k$ slopes. This gives the desired result. 

Now, let $b\in (0,1/2]$. By Theorem~\ref{thm:reflection} and the previous case of $b\in [1/2, 1)$, we obtain the desired result.
\qed\end{proof}

\section*{Acknowledgements}
We would like to thank Santanu Dey for very helpful discussions about the sequential-merge operation. We are also grateful to an anonymous reviewer for very insightful comments, which helped to shorten many proofs and catch errors in a previous draft. %\end{acknowledgements}

\appendix
\section{Appendix}
%{\red we have changed $\pi_b$ into $\pi$ and  $\pi_{1-b}$ into $\tilde \pi$.}

\begin{theorem}\label{thm:reflection}
Let $n\geq 1$ be a natural number. A function ${ \theta}:\R^n\to \R_+$ is minimal/extreme/facet for $I_b$ when $b\in [0,1/2]^n\setminus\{0\}$ if and only if ${ \tilde\theta}:\R^n\to \R_+$ defined by ${ \tilde\theta}(x) := { \theta}(-x)$ is minimal/extreme/facet for $I_{\chf-b}$, respectively, where $\chf\in \R^n$ is the vector of all ones. \end{theorem}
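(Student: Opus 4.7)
The plan is to exhibit the map $\theta\mapsto\tilde\theta$ (where $\tilde\theta(x)=\theta(-x)$) as an involution that carries the structure of $I_b$ onto that of $I_{\mathbf{1}-b}$, and to push each of the three properties (minimal/extreme/facet) through this involution. Since applying the map twice returns the original function and $\mathbf{1}-(\mathbf{1}-b)=b$, it suffices to establish each implication in one direction.

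First I would check that the map sends valid functions to valid functions. Given $y':\R^n\to\Z_+$ with finite support and $\sum_r r\,y'(r)\in(\mathbf{1}-b)+\Z^n$, define $y(r):=y'(-r)$; then $\sum_r r\,y(r)=-\sum_r r\,y'(r)\in -(\mathbf{1}-b)+\Z^n=b+\Z^n$, so validity of $\theta$ for $I_b$ gives $\sum_r\tilde\theta(r)y'(r)=\sum_r\theta(-r)y'(r)=\sum_s\theta(s)y(s)\ge 1$, showing $\tilde\theta$ is valid for $I_{\mathbf{1}-b}$.

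Next, I would verify minimality via Theorem~\ref{thm:minimalinteger}. Nonnegativity and $\tilde\theta(z)=\theta(-z)=0$ for $z\in\Z^n$ are immediate. Subadditivity transfers directly: $\tilde\theta(x)+\tilde\theta(y)=\theta(-x)+\theta(-y)\ge\theta(-x-y)=\tilde\theta(x+y)$. For the symmetry condition at $\mathbf{1}-b$, I would use periodicity of $\theta$ modulo $\Z^n$ (which is implied by minimality):
\[
\tilde\theta(x)+\tilde\theta((\mathbf{1}-b)-x)=\theta(-x)+\theta(-\mathbf{1}+b+x)=\theta(-x)+\theta(b+x)=1,
\]
where the last step is the symmetry of $\theta$ applied at $b+x$ (since $\theta(b+x)+\theta(b-(b+x))=1$).

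For the extremality statement, suppose $\tilde\theta=\frac{\pi_1'+\pi_2'}{2}$ for valid $\pi_1',\pi_2'$ on $I_{\mathbf{1}-b}$. Set $\pi_i(x):=\pi_i'(-x)$; by the first step these are valid for $I_b$, and evaluating gives $\theta=\frac{\pi_1+\pi_2}{2}$. Extremality of $\theta$ yields $\pi_1=\pi_2=\theta$, hence $\pi_1'=\pi_2'=\tilde\theta$. For the facet property, I would analyze $P(\tilde\theta)$: the correspondence $y\leftrightarrow y'$ with $y'(r)=y(-r)$ gives a bijection between $P(\theta)\subseteq R_b(\R^n,\Z^n)$ and $P(\tilde\theta)\subseteq R_{\mathbf{1}-b}(\R^n,\Z^n)$. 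Thus if $\pi'$ is any valid function for $I_{\mathbf{1}-b}$ with $P(\tilde\theta)\subseteq P(\pi')$, then $\pi(x):=\pi'(-x)$ is valid for $I_b$ with $P(\theta)\subseteq P(\pi)$, so the facet hypothesis on $\theta$ gives $\pi=\theta$, hence $\pi'=\tilde\theta$.

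The argument is essentially a collection of direct verifications; the only nonroutine point is the symmetry check, where one must combine the substitution $(\mathbf{1}-b)-x\mapsto b+x$ (after reflection and periodicity) with the symmetry of $\theta$ itself. Everything else transfers mechanically through the involution.
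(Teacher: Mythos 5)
Your proposal is correct and follows essentially the same approach as the paper: exhibit reflection as an involution, verify minimality via Theorem~\ref{thm:minimalinteger} (with the same periodicity-plus-symmetry calculation), and transfer extremality and the facet property by pulling back perturbations along the involution. The only difference is presentational — you isolate the validity transfer as a preliminary lemma and write out the facet case, which the paper leaves as ``similar.''
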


\begin{proof} The result essentially follows by applying Theorem 8.2 in~\cite{johnson1974group}. However, since that paper considers the so-called mixed-integer problem, while we are looking at the pure integer problem, we include a proof for completeness.

We show one direction as the other follows from swapping the roles of $b$ and $\chf-b$. Suppose that ${ \theta}$ is minimal for $I_b$ with $b\in [0,1/2]^n\setminus\{0\}$. Define ${ \tilde\theta}(x):= { \theta}(-x)$. We check that ${ \tilde\theta}$ is minimal using Theorem~\ref{thm:minimalinteger}. 
%Observe that ${ \tilde\theta}$ is nonnegative since ${ \theta}$ is. 
If $w\in \Z^n$ then so is $-w$, and therefore ${ \tilde\theta}(w) = { \theta}(-w) = 0$ since ${ \theta}$ is minimal. Let $x, y\in \R^n$ and note that
$${ \tilde\theta}(x+y) = { \theta}(-x-y) \leq { \theta}(-x)+{ \theta}(-y)  = { \tilde\theta}(x)+{ \tilde\theta}(y),$$
where the inequality follows from the subadditivity of ${ \theta}$. Hence ${ \tilde\theta}$ is subadditive. Finally, let $r\in \R^n$ and note that
$${ \tilde\theta}(r)+{ \tilde\theta}((\chf-b)-r) = { \theta}(-r)+{ \theta}(b-(\chf-r)) =  { \theta}(-r)+{ \theta}(b-(-r)) = 1,$$
where the second equation follows from the periodicity of ${ \theta}$ and the third equation from the symmetry of ${ \theta}$. Hence ${ \tilde\theta}$ is symmetric about $\chf-b$. From Theorem~\ref{thm:minimalinteger}, ${ \tilde\theta}$ is minimal.

\smallskip

Now assume that ${ \theta}$ is extreme. Let $\theta_1, \theta_2$ be valid for $I_{\chf-b}$ such that ${ \tilde\theta} = \frac{\theta_1+\theta_2}{2}$. We claim that $\tilde{\theta}_i(r):=\theta_i(-r)$, $i=1,2$, is a valid function for $I_b$. This would imply ${ \tilde\theta} = \theta_1=\theta_2$ from the extremality of ${ \theta}$. Let $y\in I_b$. Then $\tilde{y}(r) := y(-r)\in I_{\chf-b}$. Note that for $i=1,2$,
$$\sum_{r\in \R^n}\tilde{\theta}_i(r)y(r) = \sum_{r\in \R^n}\theta_i(-r)y(r) =  \sum_{r\in \R^n}\theta_i(-r)\tilde{y}(-r) \geq 1,$$
since $\theta_i$ is valid for $I_{\chf-b}$.

\smallskip

The proof that ${ \theta}$ is a facet if and only if ${ \tilde\theta}$  is a facet is similar.
\qed
\end{proof}

\begin{proof}[of Proposition~\ref{prop:nonnegativity}] We prove this using induction on $k$. For $k=2$, the result is easily verified using~\eqref{Gom-funct}. So let $k\geq 3$ and assume that $\pi_{k-1}$ is well-defined, nonnegative, and positive on $\R\setminus \Z$. First, we will show that $\pi_k$ is well-defined at the points $\{b\left(\frac{1}{8}\right)^{k-2}, 2b(\frac{1}{8})^{k-2}, b-2b(\frac{1}{8})^{k-2}, b-b(\frac{1}{8})^{k-2}, b\}$, that is, we will show $\pi_k$ is well-defined at the boundaries of the intervals on which it is defined. This will show that $\pi_k$ is well-defined on $[0,1)$, and since $\pi_k$ is periodic by definition, it will follow that $\pi_k$ is well-defined everywhere.

Note that
\begin{equation}\label{eq:well_defined_1}
\frac{4^{2-k}}{1-b}-\left(\frac{1}{1-b}\right)b\left(\frac{1}{8}\right)^{k-2}  = \left(\frac{2^{k-2}-b}{b-b^2}\right)b\left(\frac{1}{8}\right)^{k-2} =  \left(\frac{2^{k-2}-b}{1-b}\right)\left(\frac{1}{8}\right)^{k-2}>0, 
\end{equation}
where the inequality follows since $k\geq 3$ and $b\in (0,1)$. Also, observe that
\begin{align}\label{eq:well_defined_2}
\frac{1-4^{2-k}}{1-b}-\left(\frac{1}{1-b}\right)\left(b-b\left(\frac{1}{8}\right)^{k-2}\right)  &= \frac{1-2^{k-2}}{1-b}+\left(\frac{2^{k-2}-b}{b-b^2}\right)\left(b-b\left(\frac{1}{8}\right)^{k-2}\right)\nonumber \\
& = \frac{1+b((\frac{1}{8})^{k-2}-1)-4^{2-k}}{1-b}\nonumber\\
& \geq \frac{1+\frac{1}{2}((\frac{1}{8})^{k-2}-1)-4^{2-k}}{1-b}\nonumber\\
& > 0,
\end{align}
where the inequalities follow since $b\in (0,\frac{1}{2}]$ and $k\geq 3$. Equations~\eqref{eq:well_defined_1} and~\eqref{eq:well_defined_2} show that $\pi_k$ is well-defined and positive at the points $b\left(\frac{1}{8}\right)^{k-2}$ and $b-b\left(\frac{1}{8}\right)^{k-2}$. 

Observe that $b\in I^j_5\cap I^j_6$. Since $\frac{1-2^{k-2}}{1-b}+\left(\frac{2^{k-2}-b}{b-b^2}\right)b = 1$, it follows that $\pi_k$ is well-defined and positive at $b$.

Notice that $2b\left(\frac{1}{8}\right)^{k-2}\in I^k_2\subseteq I^{k-1}_1$ by definition. Similarly, $b-2b\left(\frac{1}{8}\right)^{k-2}\in I^k_4\subseteq I^{k-1}_5$. Therefore, by induction,
\begin{equation}\label{eq:well_defined_4}
\frac{4^{2-k}}{1-b}-\left(\frac{1}{1-b}\right)2b\left(\frac{1}{8}\right)^{k-2} =  \pi_{k-1}\left(2b\left(\frac{1}{8}\right)^{k-2}\right)>0, 
\end{equation}
and
\begin{equation}\label{eq:well_defined_5}
\frac{1-4^{2-k}}{1-b}-\left(\frac{1}{1-b}\right)\left(b-2b\left(\frac{1}{8}\right)^{k-2}\right)=  \pi_{k-1}\left(b-2b\left(\frac{1}{8}\right)^{k-2}\right)>0, 
\end{equation}
where the inequalities follows since $\pi_{k-1}$ is nonnegative. Thus, $\pi_k$ is well-defined and positive on $2b(\frac{1}{8})^{k-2}$ and $b-2b(\frac{1}{8})^{k-2}$.

Continuity of $\pi_k$ follows from the recursive piecewise definition and the confirmation above that the values are well-defined on the boundaries of the pieces.

We now show that $\pi_k$ is nonnegative and $\pi_k(x)=0$ if and only if $x\in \Z$. Let $x\in [0,1)$. If $x\in I^k_3\cup I^k_6$, then $\pi_k(x) = \pi_{k-1}(x)> 0$ by the induction hypothesis. Observe that $\pi_k$ is affine on the intervals $I^k_1, I^k_2, I^k_4,$ and $I^k_5$. From Equations~\eqref{eq:well_defined_1}-\eqref{eq:well_defined_5}, $\pi_k$ is positive on the endpoints of each of the latter intervals, except for when $x=0$. Thus, if $x\in I^k_1\setminus \{0\}\cup I^k_2\cup I^k_4\cup I^k_5$, then $\pi_k(x)> 0$ and $\pi_k(0)=0$. Finally, if $x\in \R\setminus [0,1)$, then by the periodicity of $\pi_k$, it follows that $\pi_k(x)> 0$ and $\pi_k(x)=0$ if $x\in \Z$.\qed
\end{proof}

% BibTeX users please use one of
%\bibliographystyle{plain}      % basic style, author-year citations
%\bibliography{full-bib}   % name your BibTeX data base

\end{document}